\documentclass[reqno]{amsart}

\usepackage{amsfonts}
\usepackage{amsmath}
\usepackage{amssymb}
\usepackage{amscd}
\usepackage{mathrsfs}  % This package allows the use of script letter. Type \mathscr to invoke math script.
\usepackage{amsbsy}
\usepackage{amsthm}
\usepackage{graphicx}
\usepackage{fancyhdr}
\usepackage{epsfig}
\usepackage{color}
\usepackage{xy}
\usepackage{enumitem}
%\usepackage[german]{babel}
%\usepackage{hyperref}

% Japanese Language
\usepackage{CJKutf8}
\usepackage{inputenc}
\usepackage[encapsulated]{CJK}
\usepackage[CJK, overlap]{ruby}

\usepackage[OT2,OT1]{fontenc}  % This package allows the use of cyrillic fonts. They are invoked by typing \textcyr{...}
\newcommand\cyr{%
\renewcommand\rmdefault{wncyr}%
\renewcommand\sfdefault{wncyss}%
\renewcommand\encodingdefault{OT2}%
\normalfont \selectfont} \DeclareTextFontCommand{\textcyr}{\cyr}

\newcommand{\be}{\begin{equation}}
\newcommand{\ee}{\end{equation}}

\newcommand{\bes}{\begin{equation*}}
\newcommand{\ees}{\end{equation*}}

\newcommand{\inn}[2]{{\langle #1,#2 \rangle}}

\newcommand{\C}{\mathbb{C}}

\newcommand{\bH}{\mathbb{H}}

\newcommand{\N}{\mathbb{N}}

\newcommand{\R}{\mathbb{R}}

\newcommand{\Z}{\mathbb{Z}}

\newcommand{\cK}{\mathcal{K}}

\newcommand{\cO}{\mathcal{O}}

\newcommand{\cS}{\mathcal{S}}

\newcommand{\sinc}{\mathop{\mathrm{sinc}}}
\newcommand{\sF}{\mathscr{F}}
\newcommand{\sD}{\mathscr{D}}
\newcommand{\sP}{\mathscr{P}}

\renewcommand{\Re}{\mathop{\mathrm{Re}}}
\renewcommand{\Im}{\mathop{\mathrm{Im}}}

 % added
 % added
 % added
 % added
 % added
 % added
 % added
 %added
 %added
 %added
 %added
 %added
 %added
 %added
 %added

%added

\newcommand{\Span}{\mathrm{span\,}}
\newcommand{\clos}{\mathrm{clos}}

\newcommand{\wh}[1]{{\widehat{#1}}}
\renewcommand{\rmdefault}{cmr} % Arial
\renewcommand{\sfdefault}{cmr} % Arial
\newtheorem{theorem}{Theorem}
\theoremstyle{plain}

\newtheorem{corollary}{Corollary}

\newtheorem{lemma}{Lemma}

\newtheorem{proposition}{Proposition}
\newtheorem{remark}{Remark}

\newtheorem*{notation*}{Notation}
\numberwithin{equation}{section}

\DeclareMathOperator\Sc{Sc}
\DeclareMathOperator\Ve{Vec}
\DeclareMathOperator\re{Re}
\DeclareMathOperator\im{Im}
\DeclareMathOperator\gr{graph}
\DeclareMathOperator\const{const.}

\newcommand{\mydot}{\,\cdot\,}

\begin{document}

\title{Quaternionic B-Splines}

\author{Jeffrey A. Hogan${}^{1,2}$}
\thanks{${}^1$Corresponding author}
\thanks{${}^2$Research partially supported by ARC grant DP160101537}
\address{School of Mathematical and Physical Sciences, Mathematics Bldg V123, University of Newcastle, University Drive, Callaghan NSW 2308, Australia}
\email{jeff.hogan@newcastle.edu.au}

\author{Peter Massopust${}^3$}
\thanks{${}^3$Research partially supported by DFG grant MA5801/2-1}
\address{Centre of Mathematics, Research Unit M15, Technical University of Munich, Boltzmannstr. 3, 85748 Garching b. Munich, Germany}
\email{massopust@ma.tum.de}

\begin{abstract}
We introduce B-splines on the line of quaternionic order $B_q$ ($q$ in the algebra of quaternions) for the purposes of multi-channel signal and image analysis. The functions $B_q$ are defined first by their Fourier transforms, then as the solutions of distributional differential equation of quaternionic order. The equivalence of these definitions requires properties of quaternionic Gamma functions and binomial expansions, both of which we investigate. The relationship between $B_q$ and a backwards difference operator is shown, leading to a recurrence formula. We show that the collection of integer shifts of  $B_q$ is a Riesz basis for its span, hence generating a multiresolution analysis. Finally, we demonstrate the pointwise and $L^p$ convergence of  the quaternionic B-splines to quarternionic Gaussian functions.
\vskip 12pt\noindent
\textbf{Keywords and Phrases:} Quaternions, B-splines, Clifford algebra, quaternionic binomial, quaternionic Gamma function, multiresolution analysis
\vskip 6pt\noindent
\textbf{AMS Subject Classification (2010):} 15A66, 30G35, 65D07, 42C40
\end{abstract}
%\vskip 12pt\noindent
%
\dedicatory{This paper is dedicated to the memory of Laureate Professor Jon Borwein, who passed away during its preparation. Jon was a friend and mentor to generations of mathematicians across the globe and has left an incomparable legacy of work spanning multiple disciplines. He was generous with his time and his ideas and was a highly respected and well-loved faculty member at the University of Newcastle in Australia.}

\maketitle
\section{Introduction}\label{sec1}
The extension of the concept of cardinal polynomial B-splines to orders other than $n\in\N$ was first undertaken in \cite{UB,Z}. There, real orders $\alpha > 1$ were considered and in \cite{UB} these splines were named {\em fractional B-splines}. In \cite{FBU} a more general class of cardinal B-splines of complex order or, for short, {\em complex B-splines}, $B_z: \R \to\C$ were defined in the Fourier domain by
\begin{equation}
\sF (B_z)(\omega ) =:  \widehat{B_z} (\omega) := \int_{\R} B_z(t)e^{-i\omega t}\, dt := \left( \frac{1-e^{-i\omega}}{i\omega}\right)^z,
\label{eq Definition Fourier B-Spline}
\end{equation}
for $z\in \C$ with $\Re z  >1$. 

The motivation behind the definition of complex B-splines is twofold. Firstly, there is the need for a continuous family (with respect to smoothness) of analyzing basis functions to close the gap between the the integer-valued smoothness spaces $C^n$ associated with the classical Schoenberg polynomial B-splines. Secondly, there are requirements for a single-band frequency analysis. For some applications, e.g., for phase retrieval tasks, complex-valued analysis bases are needed since real-valued bases can only provide a symmetric spectrum. Complex B-splines combine the advantages of spline approximation with an approximate one-sided frequency analysis. In fact, the spectrum $|\widehat B_{z}(\omega)| $ has the form
\[
|\widehat{B_z}(\omega)| = |\widehat{B_{\Re z}}(\omega)| e^{-i \Im z \ln |\Omega (\omega)|} e^{\Im z \arg \Omega(\omega)},
\]
where $\Omega(\omega) := \frac{1-e^{-i\omega}}{i \omega}$.
Thus, the spectrum consists of the spectrum of a real-valued B-spline, combined with a modulating and a damping factor:
The presence of the imaginary part $\Im z$ causes the frequency components on the negative and positive real axis to be  enhanced with different signs. This has the effect of shifting the frequency spectrum towards the negative or positive frequency side, depending on the sign of $\Im z$. The corresponding bases can be interpreted as approximate single-band filters \cite{FBU}.

For certain types of applications such as geophysical data processing a multi-channel description is required. For instance, seismic data has four channels, each associated with a different kind of seismic wave: the so-called P (Compression), S (Shear), L (Love) and R (Rayleigh) waves. Similarly, the colour value of a pixel in a colour image is composed of three components -- the red, green and blue channels. In order to perform the tasks of processing multi-channel signals and data an appropriate set of analyzing basis functions is required. These basis functions should have the same analytic properties of complex B-splines but should in addition be able to describe  multi-channel structures. In \cite{O1,O2} a set of analyzing functions based on wavelets and Clifford-analytic methodologies were introduced in an effort to process four channel seismic data. A multiresolution structure for the construction of wavelets on the plane for the analysis of four-channel signals was outlined in \cite{HM}. Here we first investigate the mathematical foundations for the application of quaternion-valued basis functions to the analysis of multichannel signals and images.

This paper is organized as follows. In section 2 we outline the properties of the algebra of quaternions that will be required in later sections and set the notation. In section 3 we introduce the quaternionic B-splines $B_q$ via their Fourier transforms, show that in general the semigroup property $B_z*B_w=B_{z+w}$ enjoyed by the complex B-splines, fails when the order is quaternionic, give estimates of the $L^2$ and $L^1$ norms of he quaternionic B-splines, and outline their decay, smoothness, and approximation order. Section 4 concentrates on quaternionic binomial expansions and quaternionic Gamma functions, and results are proved which will allow us to give a  description of $B_q$ in the time domain. This description is delivered in section 5, 
where we also prove a recurrence relation for quaternionic splines, achieved through the consideration of an appropriate backwards difference operator. In section 6 we outline miscellaneous properties of quaternion B-splines and Gamma functions, including rotation-covariance between the quaternionic order and the range of these functions. The refinability and multiscale structure of $B_q$ is the topic of section 7. It is shown that, like the standard B-splines, the quaternionic B-splines are scaling functions in the sense of wavelet theory and their shifts form a Riesz basis for their linear span. The approximation order of the associated projection operators is also investigated. Finally in section 8, the pointwise and $L^p$ convergence properties of the the B-splines to quaternionic Gaussian functions in proved.

%In this paper, we extend the concept of complex B-spline to include quaternionic orders. This is motivated by an attempt to design basis functiions that have automatically a four channel or four-dimensional structure built in.

%Why quaternionic B-splines:
%\begin{enumerate}
%\item most of the above reasons plus multi-channel signal and image analysis
%\item mention papers be Ohlrede; geophysical application: four-component seismic data (hydrophone, shear and compressional waves)
%\end{enumerate}
%\nl
%Some prerequisites about quaternions and/or Clifford analysis; $\Sc q$: scalar part of quaternion $q$; $\Ve q$: vector part of quaternion.
%\nl
%Notation: Fourier-Plancherel transform $\sF$ for $L^2 (\R)$ functions:
%\begin{gather*}
%(\sF f) (\xi) := \wh{f} (\xi) := \int_\R f(x) e^{-i \xi x} dx.
%\end{gather*}
%For a quaternion-valued function $f = f_0 + \sum\limits_{j=1}^3 f_j e_j\in L^p(\R,\bH)$ the Fourier transform is defined as
%\[
%(\sF f) (\xi) := \wh{f} (\xi) := \wh{f_0} + \sum\limits_{j=1}^3 \wh{f_j} e_j,
%\]
%where the $\;\wh{}\;$ on the far right refers to the ordinary Fourier transform of real-valued functions.

%

\section{Notation and Preliminaries}
The real, associative algebra of quaternions ${\mathbb H}={\mathbb H}_{\mathbb R}$ is given by
$${\mathbb H}_{\mathbb R}=\left\{a+\sum_{i=1}^3v_ie_i : a, v_1, v_2, v_3\in{\mathbb R}\right\},$$
where the imaginary units $e_1, e_2, e_3$ satisfy $e_1^2=e_2^2=e_3^2=-1$, $e_1e_2=e_3$, $e_2e_3=e_1$ and $e_3e_1=e_2$. Because of these  relations, ${\mathbb H}$ is a non-commutative algebra. 

Each quaternion $q=a+\sum\limits_{i=1}^3v_ie_i$ may be decomposed as $q=\Sc (q)+\Ve (q)$ where $\Sc (q)=a$ is the {\it scalar part} of $q$ and $\Ve (q)=v=\sum\limits_{i=1}^3v_ie_i$ is the {\it vector part} of $q$. The {\it conjugate} $\overline{q}$ of the real quaternion $q=a+v$ is the quaternion $\overline{q}=a-v$. Note that $q\overline{q}=\overline{q}q=|q|^2=a^2+|v|^2=a^2+\sum\limits_{i=1}^3v_i^2$. %We also consider the complexified quaternions
%$${\mathbb H}_{\mathbb C}=\{a+\sum_{i=1}^3v_ie_i;\, a, v_1, v_2, v_3\in{\mathbb C}\}$$
%in which $e_1,e_2,e_3$ satisfy the same relations as above. The conjugation in this case takes the form $\overline{q}=\overline{a}-\sum_{i=1}^3\overline{v_i}e_i$. Again we have $q\overline{q}=\overline{q}q=|q|^2=|a|^2+|v|^2=|a|^2+\sum_{i=1}^3|v_i|^2$.
Note also that if $v=\sum\limits_{j=1}^3v_je_j$ and $w=\sum\limits_{j=1}^3w_je_j$ are quaternionic vectors, then 
\begin{equation}
vw=-\langle v,w\rangle +v\wedge w\label{vec mult},
\end{equation}
where $\langle v,w\rangle =\sum\limits_{j=1}^3v_jw_j$ is the scalar product of $v$ and $w$ and 
$$v\wedge w=(v_2w_3-v_3w_2)e_1+(v_3w_1-v_1w_3)e_2+(v_1w_2-v_2w_1)e_3$$ 
is the vector (cross) product of $v$ and $w$.

If $q=q_0+\sum\limits_{i=1}^3q_ie_i\in{\mathbb H}_{\mathbb C}=\left\{a+\sum\limits_{i=1}^3v_ie_i : a, v_1, v_2, v_3\in{\mathbb C}\right\}$, we define the conjugate $\overline{q}$ of $q$ by $\overline q=\overline{q_0}-\sum\limits_{i=1}^3\overline{q_i}e_i$, where $\overline{q_i}$ is the complex conjugate of the complex number $q_i$. If $p=p_0+\sum\limits_{i=1}^3p_ie_i\in{\mathbb H}_{\mathbb C}$, we define the inner product $\langle p,q\rangle$ to be the complex number  $\langle p,q\rangle =\sum\limits_{i=0}^3p_1\overline q_i=\Sc (p\overline{q})$. We also define $e^q$ by the usual series: $e^q=\sum\limits_{j=0}^\infty \dfrac{q^j}{j!}$. We require the following bounds on $|e^q|$, which we state without proof.
\begin{lemma}\label{exp bounds} Let $q=a+v\in{\mathbb H}_{\mathbb R}$. Then we have
\begin{enumerate}
\item[1.] $|e^q|=e^a\leq e^{|q|}$.
\item[2.] I$f z\in{\mathbb C}$ and $q'=zq\in{\mathbb H}_{\mathbb C}$, then $|e^{q'}|\leq e^{\sqrt 2|q'|}$.
\end{enumerate}
\end{lemma}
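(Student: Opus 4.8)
The plan is to exploit the fact that the real (or complex) scalar part of $q$ lies in the centre of the algebra, so that the exponential factorises, and then to evaluate the exponential of a pure vector part in closed form via the relation $v^2=-|v|^2$, which turns $e^v$ into a quaternionic analogue of Euler's formula.

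For part 1, write $q=a+v$ with $a$ real and $v=\Ve(q)$. Since $a$ is a real scalar it commutes with $v$, so $e^q=e^a e^v$. From \eqref{vec mult} we have $v^2=-\langle v,v\rangle=-|v|^2$; hence, setting $r=|v|$ and $\hat v=v/r$ (the case $v=0$ being trivial), the series for $e^v$ splits into even and odd powers and collapses to $e^v=\cos r+\hat v\sin r$. As $\hat v$ is a real unit vector, $|e^v|^2=\cos^2 r+\sin^2 r=1$, so $|e^q|=e^a$. Finally $a\le\sqrt{a^2+|v|^2}=|q|$ gives $e^a\le e^{|q|}$.

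For part 2, I would run the same computation over ${\mathbb H}_{\mathbb C}$, keeping track of the fact that $z\in{\mathbb C}$ is now a central complex scalar commuting with the $e_j$. Writing $q'=zq=za+zv$, the central factor $za$ splits off as $e^{q'}=e^{za}e^{zv}$ with $|e^{za}|=e^{a\Re z}$. Since $(zv)^2=z^2v^2=-(z|v|)^2$, the even/odd splitting now yields $e^{zv}=\cos s+\hat v\sin s$ with the complex argument $s=z|v|$. Because $\hat v$ is still a real unit vector, the inner product defined above gives
\[
|e^{zv}|^2=|\cos s|^2+|\sin s|^2 .
\]
The key identity is that for $s=\sigma+i\tau$ one has $|\cos s|^2+|\sin s|^2=\cosh(2\tau)$ with $\tau=\Im s=|v|\,\Im z$; bounding $\cosh(2\tau)\le e^{2|\tau|}$ then gives $|e^{q'}|\le e^{a\Re z+|v||\Im z|}$. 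A single application of the Cauchy--Schwarz inequality to the vectors $(a,|v|)$ and $(\Re z,|\Im z|)$ bounds the exponent by $|q||z|=|q'|$, and $e^{|q'|}\le e^{\sqrt2\,|q'|}$ finishes the proof.

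The step I expect to require the most care is part 2. The naive estimate $|e^{q'}|\le\sum_n|q'|^n/n!=e^{|q'|}$ is \emph{not} available by a term-by-term argument, because the norm on ${\mathbb H}_{\mathbb C}$ (unlike on ${\mathbb H}_{\mathbb R}$) is not multiplicative: ${\mathbb H}_{\mathbb C}$ contains zero divisors and $|pq|\ne|p||q|$ in general. This forces the exact Euler-type evaluation above, and the delicate point is correctly extracting the growth in the imaginary direction, encoded by the $\cosh(2\,\Im(z|v|))$ term. I note that the Cauchy--Schwarz step in fact yields the sharper bound $|e^{q'}|\le e^{|q'|}$, so the stated constant $\sqrt2$ carries slack; if one instead argued through the isomorphism ${\mathbb H}_{\mathbb C}\cong M_2({\mathbb C})$ and the submultiplicativity of the Frobenius norm, the factor $\sqrt2$ would arise naturally from the relation $\Norm p\Vert_F=\sqrt2\,|p|$.
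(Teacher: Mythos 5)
Your proof is correct and complete. The paper states this lemma explicitly without proof, so there is no argument of the authors' to compare against; your route --- splitting off the central scalar, using $v^2=-|v|^2$ to get the Euler-type formula $e^{v}=\cos|v|+\tfrac{v}{|v|}\sin|v|$ (and its complexification $e^{zv}=\cos(z|v|)+\tfrac{v}{|v|}\sin(z|v|)$), and then invoking $|\cos s|^2+|\sin s|^2=\cosh(2\Im s)$ --- is the natural one, and it is the same identity the authors themselves use later in estimating $\|\wh{B_q}\|_2$. Your observations about the subtleties are also on target: the componentwise norm on ${\mathbb H}_{\mathbb C}$ is not multiplicative (only $|pq|\leq\sqrt2\,|p|\,|q|$ holds, which is indeed the likely origin of the stated constant), so the term-by-term bound $|e^{q'}|\leq e^{|q'|}$ is not available directly, whereas your exact evaluation plus Cauchy--Schwarz legitimately yields the sharper conclusion $|e^{q'}|\leq e^{a\Re z+|v|\,|\Im z|}\leq e^{|q'|}$, of which the lemma's bound $e^{\sqrt2\,|q'|}$ is a weakening. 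The only point worth making explicit in a final write-up is that $e^{za+zv}=e^{za}e^{zv}$ requires absolute convergence of the exponential series on ${\mathbb H}_{\mathbb C}$, which the inequality $|pq|\leq\sqrt2\,|p|\,|q|$ (or the matrix realization) supplies.
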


If ${\mathbb F}={\mathbb R},\ {\mathbb C},\ {\mathbb H}_{\mathbb R}$ or ${\mathbb H}_{\mathbb C}$, and $1\leq p<\infty$, then $L^p({\mathbb R},{\mathbb F})$ is the Banach space of measurable functions $f:{\mathbb R}\to{\mathbb F}$ for which $\int\limits_{-\infty}^\infty |f(x)|^p\, dx<\infty$, where the meaning of $|f(x)|$ is dependent on ${\mathbb F}$. $L^\infty ({\mathbb R},{\mathbb F})$ is defined similarly.  On $L^2({\mathbb R},{\mathbb C})$, we define an inner product by 
$\langle f,g\rangle =\int\limits_{-\infty}^\infty f(x)\overline{g(x)}\, dx.$
 On $L^2({\mathbb R},{\mathbb H}_{\mathbb C})$, the inner product  is given by
\begin{equation}
\langle f,g\rangle =\Sc\bigg(\int_{-\infty}^\infty f(x)\overline{g(x)}\, dx\bigg)=\int_{-\infty}^\infty\langle f(x),g(x)\rangle\, dx.\label{IP complex quat}
\end{equation}

The  Fourier-Plancherel transform $\sF$ is defined on  $L^1 (\R, {\mathbb F})$ by 
\begin{gather*}
(\sF f) (\xi) := \wh{f} (\xi) := \int_\R f(x) e^{-i \xi x} dx
\end{gather*}
and may be extended to $L^2({\R},{\mathbb F})$, on which it becomes a multiple of a unitary mapping: $\langle\sF f,\sF g\rangle =2\pi \langle f,g\rangle$. 
%For a quaternion-valued function $f = f_0 + \sum\limits_{j=1}^3 f_j e_j\in L^p(\R,\bH)$ the Fourier transform is defined as
%\[
%(\sF f) (\xi) := \wh{f} (\xi) := \wh{f_0} + \sum\limits_{j=1}^3 \wh{f_j} e_j,
%\]
%where the $\;\wh{}\;$ on the far right refers to the ordinary Fourier transform of real-valued functions.

\section{Quaternionic B-Splines}
For $q=a+v=a+\sum\limits_{j=1}^3v_je_j\in{\mathbb H}_{\mathbb R}$ and
%, i.e., $a,v_1,v_2,v_3\in{\mathbb R}$ and $e_1^2=e_2^2=e_3^3=-1$, $e_1e_2=e_3$, $e_2e_3=e_1$, $e_3e_1=e_2$. 
$z\in \C$, we define the quaternionic power $z^q\in{\mathbb H}_{\mathbb C}$ by
\be\label{eq2.1}
z^q := z^a[\cos (|v|\log z)+\frac{v}{|v|}\sin (|v|\log z)].
\ee
This definition of $z^q$ allows for the usual differentiation and integration rules:
\begin{equation}
\frac{d}{dz} z^q = q z^{q-1}\qquad\text{and}\qquad \int z^q dz = \frac{z^{q+1}}{q+1} + \const, \quad q\neq -1.\label{diff/int formulae}
\end{equation}
In general, however, the semigroup property $z^{q_1}z^{q_2}=z^{q_1+q_2}$ fails to hold. The following proposition characterizes the situation in which the semigroup property holds:

\begin{proposition}\label{prop1}
Suppose that $q_1=a_1+v_1, q_2=a_2+v_2\in{\mathbb H}_{\mathbb R}$ is fixed. 
%Consider the function $f_q:{\mathbb C}\to{\mathbb H}_{\mathbb C}$ given by
%$$f_q(z)=z^q=\cos (|q|\log z)+\frac{q}{|q|}\sin (|q|\log z).$$
Then  $z^{q_1}z^{q_2}=z^{q_3}$ for all $z$ in a neighborhood of $z=1$ if and only if the set $\{v_1,v_2\}$ is linearly dependent in ${\mathbb R}^3$ and $q_3=q_1+q_2$.
%\item In the case $n=2$, in which case the Clifford algebra is the set of quaternions ${\mathbb H}=\{a+be_1+ce_2+de_3 : a,b,c,d\in{\mathbb R}\}$ then the results above extend to $q_1,q_2,q_3\in\Lambda_1\oplus\Lambda_2$.
%\end{enumerate}
\end{proposition}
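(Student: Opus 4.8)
The plan is to recast the quaternionic power in exponential form and then read off the condition from the first two Taylor coefficients. Writing $u=v/|v|$ for the unit vector part (so that $u^2=-1$ by \eqref{vec mult}) and $L=\log z$ for the principal logarithm, the series defining $e^q$ gives $\cos(|v|L)+u\sin(|v|L)=\sum_{k\ge 0}(vL)^k/k!=e^{vL}$, since $v^{2m}=(-|v|^2)^m$ and $v^{2m+1}=(-|v|^2)^m v$. Because the scalar $a$ commutes with $v$, the definition \eqref{eq2.1} therefore collapses to $z^q=e^{aL}e^{vL}=e^{qL}$ (with the term in brackets simply absent, and $z^q=z^a$, when $v=0$). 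As $z$ ranges over a neighborhood of $1$, $L=\log z$ ranges over a neighborhood of $0$ in $\C$, and both sides of the desired identity are ${\mathbb H}_{\mathbb C}$-valued entire functions of $L$; hence $z^{q_1}z^{q_2}=z^{q_3}$ near $z=1$ is equivalent to $e^{q_1L}e^{q_2L}=e^{q_3L}$ for all $L$ near $0$, which by analyticity is equivalent to equality of all Taylor coefficients in $L$.

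For the forward implication I would expand both sides as power series in $L$. The linear term forces $q_3=q_1+q_2$. The quadratic term gives $q_3^2=q_1^2+2q_1q_2+q_2^2$; substituting $q_3=q_1+q_2$ and using $(q_1+q_2)^2=q_1^2+q_1q_2+q_2q_1+q_2^2$ leaves $q_1q_2=q_2q_1$, i.e. $[q_1,q_2]=0$. Since the scalar parts and the scalar--vector cross terms always commute, $[q_1,q_2]=v_1v_2-v_2v_1=2\,v_1\wedge v_2$ by \eqref{vec mult}, so the condition is exactly $v_1\wedge v_2=0$, which in $\R^3$ is precisely the linear dependence of $\{v_1,v_2\}$. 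Thus both asserted conclusions drop out of the first two coefficients.

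For the converse I would argue that linear dependence confines the computation to a commutative subalgebra. If $\{v_1,v_2\}$ is dependent there is a unit pure quaternion $u$ (with $u^2=-1$) and real scalars $\beta_1,\beta_2$ with $v_1=\beta_1u$, $v_2=\beta_2u$; then $q_1,q_2$ both lie in the real-commutative subalgebra $\R\oplus\R u\cong\C$, so they commute, whence $e^{q_1L}e^{q_2L}=e^{(q_1+q_2)L}$ and $z^{q_1}z^{q_2}=z^{q_1+q_2}$, so $q_3=q_1+q_2$ works. The degenerate cases $v_1=0$ or $v_2=0$ are covered here as well, and are consistent since then $\{v_1,v_2\}$ is automatically dependent.

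The only genuinely delicate points are bookkeeping rather than substance: one must justify the passage from $z^q$ to $e^{qL}$ for the complex logarithm (treating $v=0$ separately), and one must invoke analyticity in $L$ to convert ``holds on a neighborhood of $z=1$'' into equality of Taylor coefficients. The conceptual heart --- that the obstruction to the semigroup law is exactly the commutator $[q_1,q_2]=2\,v_1\wedge v_2$ --- already surfaces at second order, so no higher-order analysis or Baker--Campbell--Hausdorff machinery is needed.
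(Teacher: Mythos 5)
Your proof is correct, and it reaches the same two conditions from the same place --- the first- and second-order coefficients of the expansion in $\log z$ --- but the packaging is genuinely different and, I think, cleaner. The paper first differentiates $z^{q_1}z^{q_2}=z^{q_3}$ and sets $z=1$ to force $q_3=q_1+q_2$, then expands the trigonometric product in \eqref{eq1a}, isolates the vector part, and reads off $v_1\wedge v_2=0$ from the coefficient of $(\log z)^2$ in \eqref{wedge eqn}. You instead observe that $v^2=-|v|^2$ turns the bracketed expression in \eqref{eq2.1} into $e^{vL}$, so that $z^q=e^{qL}$ with $L=\log z$; the whole question then becomes whether $e^{q_1L}e^{q_2L}=e^{q_3L}$ identically in $L$, and the obstruction at second order is exactly the commutator $[q_1,q_2]=2\,v_1\wedge v_2$. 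This buys two things: it makes transparent \emph{why} linear dependence of $\{v_1,v_2\}$ is the right condition (it is precisely commutativity of $q_1$ and $q_2$), and it gives the converse for free, since commuting quaternions lie in a copy of $\C$ where the exponential law holds --- a direction the paper's proof leaves implicit (it only surfaces later in the remark about the subalgebras $V_k$). Your flagged bookkeeping points (the $v=0$ degeneracy in \eqref{eq2.1}, and using analyticity in $L$ to pass from an identity near $z=1$ to equality of Taylor coefficients) are exactly the right ones and are easily discharged.
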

\begin{proof} 
First note that with an application of the first of the equations in (\ref{diff/int formulae}), differentiating the equation $z^{q_1}z^{q_2}=z^{q_3}$ with respect to $z$ yields
$$\frac{q_1}{z}z^{q_1}z^{q_2}+z^{q_1}\frac{q_2}{z}z^{q_2}=\frac{q_3}{z}z^{q_3}.$$
Setting $z=1$ in this equation gives $q_1+q_2=q_3$. Furthermore,
\begin{align}
z^{q_1+q_2}&=a^{a_1+a_2}[\cos (|v_1+v_2|\log z)+\frac{v_1+v_2}{|v_1+v_2|}\sin (|v_1+v_2|\log z)]\notag\\
&=z^{q_1}z^{q_2}\notag\\
&=z^{a_1+a_2}[(\cos (|v_1|\log z)+\frac{v_1}{|v_1|}\sin (\log z))(\cos (|v_2|\log z)+\frac{v_2}{|v_2|}\sin (\log z))]\notag\\
&=z^{a_1+a_2}[\cos(|v_1|\log z)\cos (|v_2|\log z)+\frac{v_1v_2}{|v_1||v_2|}\sin (|v_1|\log z)\sin (|v_2|\log z)\notag\\
&\qquad +\frac{v_1}{|v_1|}\sin (|v_1|\log z)\cos (|v_2|\log z)+\frac{v_2}{|v_2|}\sin (|v_2|\log z)\cos (|v_1|\log z)].\label{eq1a}
\end{align}
%Equating the $\Lambda_2$ parts of equations $z^{q_1}z^{q_2}$ from (\ref{eq1a}) and noting that the $\Lambda_2$ part of $z^{q_1+q_2}$ is zero gives
%\[
%\frac{q_1\wedge q_2}{|q_1||q_2|}\sin (|q_1|\log z)\sin (|q_1|\log z)=0,
%\]
%for all $z$, from which we deduce $q_1\wedge q_2=0$. Equivalently, $q_1=\lambda q_2$ for some real constant $\lambda$.
%Equating the {\it real} part of $z^{q_1}z^{q_2}$ from  (\ref{eq1a}) with the real part of $z^{q_1+q_2}$  gives 
%\begin{equation}
%\cos (|q_1|w)\cos (|q_2|w)-\frac{\langle q_1,q_2\rangle}{|q_1||q_2|}\sin (|q_1|w)\sin (|q_2|w)=\cos (|q_1+q_2|w)\label{eq3a},
%\end{equation}
%with $w=\log z$. 
We now divide both sides of (\ref{eq1a}) by $z^{a_1+a_2}$ and equate the vector parts of both sides of the resulting equation:
\begin{align}
\frac{v_1+v_2}{|v_1+v_2|}\sin (|v_1+v_2|\log z)&=\frac{v_1}{|v_1|}\sin (|v_1|\log z)\cos (|v_2|\log z)\notag\\
&+\frac{v_2}{|v_2|}\sin (|v_2|\log z)\cos (|v_1|\log z)\notag\\
&+\frac{v_1\wedge v_2}{|v_1||v_2|}\sin (|v_1|\log z)\sin (|v_2|\log z).\label{wedge eqn}
\end{align}
Writing $\cos\theta =1-\frac{\theta^2}{2} + \cdots$ and $\sin\theta =\theta - \frac{\theta^3}{6}+ \cdots$, we expand both sides of equation (\ref{wedge eqn}), multiply the  power series on the right hand side  together and extract the coefficient of $(\log z)^2$ on both sides. This yields
$v_1\wedge v_2=0$, which gives the result.
\end{proof}

%\begin{lemma}\label{lem1}
%For $k=1,2$, let $q_k := a_k + v_k\in \bH$ with $a_k := \Sc q_k$ and $v_k:= \Ve q_k$. If $z\in \C$, then,
%\be
%v_2 = \lambda v_1, \;\lambda\in \R^+\quad\text{implies}\quad z^{q_1}z^{q_2} = z^{q_1+q_2}.
%\ee
%\end{lemma}
%\begin{proof}
%Suppose there exists a $\lambda\in \R^+$ so that $v_2 = \lambda v_1$. Let $z\in \C$. Then
%\begin{align*}
%z^{q_1}z^{q_2} &= \\
%&= z^{a_1+a_2}\left[\cos (|v_1|\log z)+\frac{v_1}{|v_1|}\sin (|v_1|\log z)\right] \left[\cos (|v_2|\log z)+\frac{v_2}{|v_2|}\sin (|v_2|\log z)\right]\\
%&= z^{a_1+a_2}\left[\cos (|v_1|\log z)\cos (|v_2|\log z) + \frac{v_2}{|v_2|}\cos (|v_1|\log z)\sin (|v_2|\log z) + \right.\\
%&\qquad + \left.\frac{v_1}{|v_1|}\sin (|v_1|\log z)\cos (|v_2|\log z) + \frac{v_1}{|v_1|}\frac{v_2}{|v_2|}\sin (|v_1|\log z)\sin (|v_2|\log z)\right].
%\end{align*}
%As $v_2 = \lambda v_1$, $\lambda > 0$, it follows that $|v_1+v_2| = |v_1| + |v_2|$, $\frac{v_2}{|v_2|} = \frac{v_1}{|v_1|}$, and $\frac{v_1}{|v_1|} = \frac{v_1 + v_2}{|v_1 + v_2|}$. Thus, employing the trigonometric identities for the sum of angles for $\sin$ and $\cos$ produces
%\begin{align*}
%z^{q_1}z^{q_2} = z^{a_1+a_2}\left[\cos (|v_1+v_2|\log z)+\frac{v_1+v_2}{|v_1+v_2|}\sin (|v_1+v_2|\log z)\right] = z^{q_1+q_2}.
%\end{align*}
%\end{proof}
For $z\in \R$, we also define
\[
z^q_+ := \begin{cases} z^q, & z > 0;\\ 0, & \text{otherwise}.
\end{cases}
\]
%Using the above form for $z^q$, it can be shown that
%\[
%\frac{d}{dz} z^q = q z^{q-1}\qquad\text{and}\qquad \int z^q dz = \frac{z^{q+1}}{q+1} + \const, \quad q\neq -1.
%\]

We define the B-spline $B_q$ of quaternionic order $q$ (for short {\it quaternionic B-spline})  in the Fourier domain to be the function $\wh{B_q}:\R\to\bH_{\mathbb C}$ given by 
\begin{equation}
\wh{B_q} (\xi) := \left(\frac{1-e^{-i \xi}}{i \xi}\right)^q, \quad \Sc (q) > 1.\label{qB}
\end{equation}
Setting $\Xi (\xi) := \dfrac{1-e^{-i \xi}}{i \xi}$, 
we obtain, via \eqref{eq2.1}, the precise meaning of (\ref{qB}), namely,
\be\label{Bhat}
\wh{B_q} (\xi) = \Xi (\xi)^{\Sc q} \left(\cos (|v|\log \Xi (\xi))+\frac{v}{|v|}\sin (|v|\log \Xi (\xi))\right).
\ee

\begin{figure}[h!]
\begin{center}
\includegraphics[width=5cm, height= 4cm]{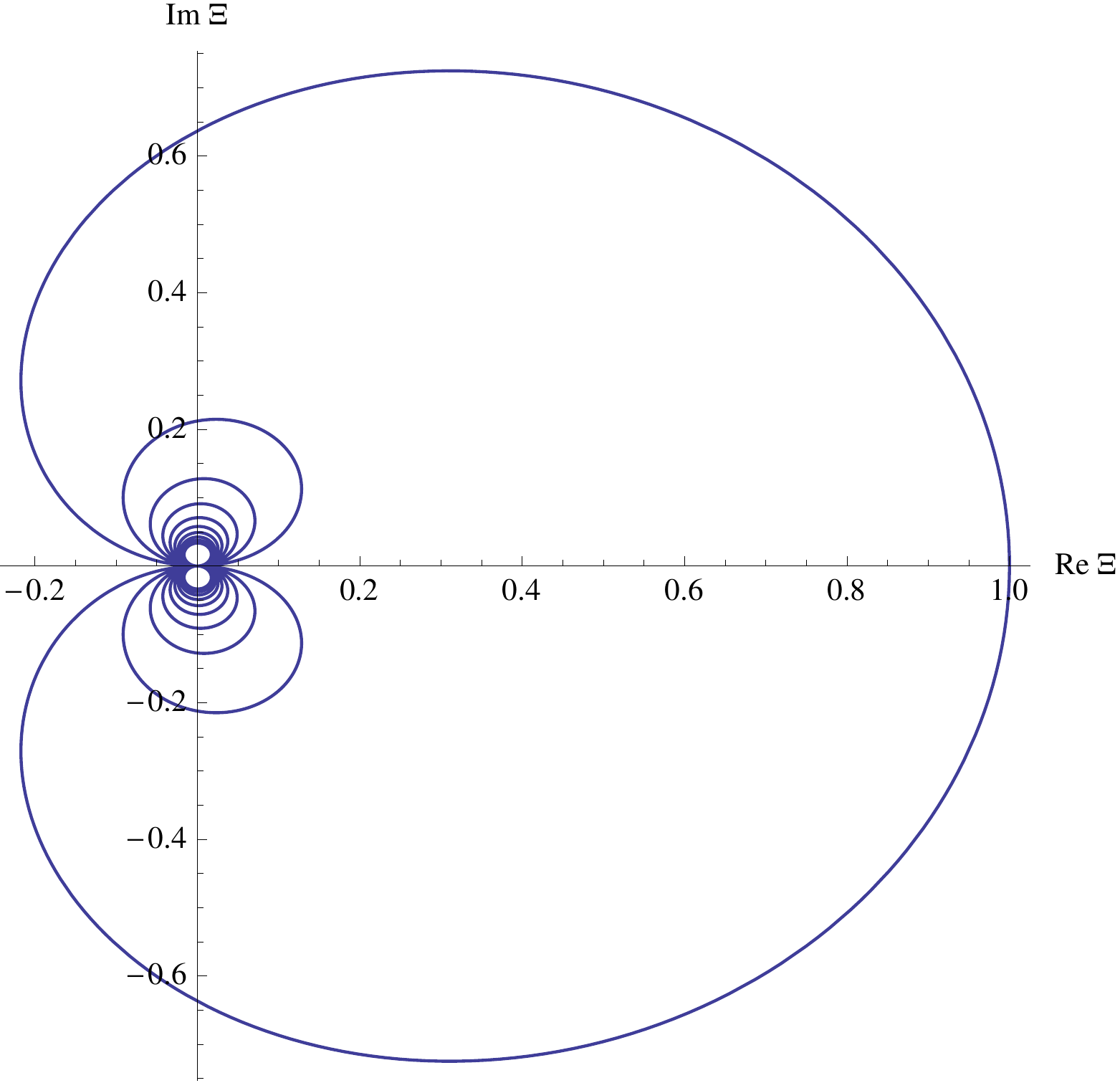}
\caption{The graph of the function $\Xi$ in the complex plane.}\label{fig0}
\end{center}
\end{figure}

The function $\Xi$ has a removable singularity at $\xi = 0$ with $\Xi (0) = 1$.  It follows from $\re \Xi (\xi) = \xi^{-1}\,\sin \xi$ and $\im \Xi (\xi) = \xi^{-1}\,(1-\cos\xi)$, that $\gr\Xi\cap (\R^-\times \{0\}) = \emptyset$. Hence, $\Xi$ and therefore $\wh{B_q}$ are well-defined when $-\pi < \arg\Xi \leq \pi$. Equation \eqref{Bhat} also implies that $\wh{B_q} \in L^2(\R,\bH_{\mathbb C})$ for a fixed $q$ with $\Sc q > \frac12$ and in $L^1(\R,\bH_{\mathbb C})$ for a fixed $q$ with $\Sc (q) > 1$ as the fractional B-splines \cite{UB} satisfy these conditions. For, if $\Sc (q) > \frac12$ then since $|\cos z|^2+|\sin z|^2=\cosh (2\arg z)$, we have
\begin{align}\label{eq2.3a}
\| \wh{B_q}\|_2^2 &= \int_\R |\Xi (\xi)^{ \Sc q}|^2 |\cos (|v|\log \Xi (\xi))+\frac{v}{|v|}\sin (|v|\log \Xi (\xi))|^2 d\xi\nonumber\\
& \leq  \int_\R |\Xi (\xi)^{\Sc q}|^2 (|\cos (|v|\log \Xi (\xi))|^2 + |\sin (|v|\log \Xi (\xi))|^2) d\xi\nonumber\\
& = \int_\R |\Xi (\xi)^{\Sc q}|^2 \cosh (|v|\arg\Xi (\xi)) d\xi\\
& \leq \cosh (\pi |v|)\, \| \wh{B}_{\Sc q}\|_2^2 < \infty.\nonumber
\end{align}
Similarly, if $\Sc (q) > 1$ then
\begin{align*}
\| \wh{B_q}\|_1 &= \int_\R |\Xi (\xi)^{ \Sc q}| |\cos (|v|\log \Xi (\xi))+\frac{v}{|v|}\sin (|v|\log \Xi (\xi))| d\xi\\
%& \leq  \int_\R |\Xi (\xi)^{\Sc q}| (|\cos (|v|\log \Xi (\xi))| + |\sin (|v|\log \Xi (\xi))|) d\xi\\
& \leq \int_\R |\Xi (\xi)^{\Sc q}|\, \sqrt{\cosh (|v|\arg\Xi (\xi ))} d\xi\\
& \leq \sqrt{\cosh (\pi |v|)}\, \| \wh{B}_{\Sc q}\|_1 < \infty.
\end{align*}
Note that $\wh{B_q}\in L^1(\R,\bH_{\mathbb C})$ implies that $B_q$ is uniformly continuous on $\R$.

The next results follow directly from the corresponding properties of fractional B-splines \cite{UB}. For a real number $s\geq 0$ and $1\leq p\le \infty$,  the Bessel potential space $H^{s,p}({\mathbb R},{\mathbb H})$ is given by
$$H^{s,p}({\mathbb R},{\mathbb H}_{\mathbb C})=\left\{f\in L^p({\mathbb R},{\mathbb H}_{\mathbb C}) : \sF^{-1}[(1+|\xi |^2)^{s/2}\sF f]\in L^p({\mathbb R},{\mathbb H}_{\mathbb C})\right\}.$$
\begin{proposition} 
Let $B_q$ be a quaternionic B-splines with $\Sc (q) >\frac12$. Then $B_q$ enjoys the following properties:
\begin{enumerate}
\item[\emph{(i)}] \emph{Decay}: $B_q \in \cO(|\xi|^{-(\Sc (q))})$ as $|\xi|\to \infty$.
\item[\emph{(ii)}] \emph{Smoothness}: $B_q\in H^{s,p} (\R,\bH_{\mathbb C})$ for $1\leq p\leq\infty$ and $0\leq s < \Sc (q) + \frac1p$.
%\item[\emph{(iii)}] \emph{Approximation Order}: $B_q$ has approximation order $\lceil\Sc q\rceil$ where the ceiling function $\lceil\mydot\rceil:\R\to \Z$ is given by $r\mapsto \min\{n\in \Z : n\geq r\}$.
\item[\emph{(iii)}] \emph{Reproduction of Polynomials}: $B_q$ reproduces polynomials up to order $\lceil\Sc (q)\rceil$, where the ceiling function $\lceil\mydot\rceil:\R\to \Z$ is given by $r\mapsto \min\{n\in \Z : n\geq r\}$.
\end{enumerate}
\end{proposition}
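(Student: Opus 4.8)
The plan is to deduce each of the three properties directly from the corresponding property of the real fractional B-spline $B_{\Sc q}$, exploiting the factorization of $\wh{B_q}$ in \eqref{Bhat} into the scalar factor $\Xi(\xi)^{\Sc q}$ times a bounded quaternionic trigonometric factor. The key estimate driving everything is the one already used in \eqref{eq2.3a}, namely that the quaternionic factor $\cos(|v|\log\Xi(\xi))+\tfrac{v}{|v|}\sin(|v|\log\Xi(\xi))$ has modulus $\sqrt{\cosh(|v|\arg\Xi(\xi))}\le\sqrt{\cosh(\pi|v|)}$, using $-\pi<\arg\Xi\le\pi$. Thus pointwise $|\wh{B_q}(\xi)|\le\sqrt{\cosh(\pi|v|)}\,|\wh{B_{\Sc q}}(\xi)|$, so $\wh{B_q}$ is dominated up to a constant by the Fourier transform of the real fractional B-spline of order $\Sc q$.

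For the decay statement (i), I would observe that $|\Xi(\xi)|=\bigl|\tfrac{1-e^{-i\xi}}{i\xi}\bigr|\le\tfrac{2}{|\xi|}$, so $|\Xi(\xi)^{\Sc q}|=|\Xi(\xi)|^{\Sc q}\in\cO(|\xi|^{-\Sc q})$ as $|\xi|\to\infty$; combined with the uniform bound on the trigonometric factor, this gives $|\wh{B_q}(\xi)|\in\cO(|\xi|^{-\Sc q})$, matching the known decay rate of fractional B-splines in \cite{UB}. (I note the statement is phrased for $B_q$ but the natural decay estimate is really on $\wh{B_q}$, inherited verbatim from \cite{UB}.) For the smoothness statement (ii), I would plug the pointwise domination into the definition of the Bessel potential norm: since membership in $H^{s,p}$ is governed by the integrability of $(1+|\xi|^2)^{s/2}\wh{B_q}(\xi)$ in the appropriate sense, and since $(1+|\xi|^2)^{s/2}|\wh{B_q}(\xi)|\le\sqrt{\cosh(\pi|v|)}\,(1+|\xi|^2)^{s/2}|\wh{B_{\Sc q}}(\xi)|$, the $H^{s,p}$ membership of $B_{\Sc q}$ for $0\le s<\Sc q+\tfrac1p$ forces the same for $B_q$. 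The range of admissible $s$ is therefore identical to the real case.

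For the polynomial reproduction statement (iii), the mechanism is the Strang–Fix condition: reproduction of polynomials up to a given order is equivalent to $\wh{B_q}$ having zeros of prescribed multiplicity at the nonzero integer multiples of $2\pi$, together with $\wh{B_q}(0)=1$. Here $\wh{B_q}(0)=\Xi(0)^{\Sc q}(\cdots)=1$ since $\Xi(0)=1$ makes $\log\Xi(0)=0$, and the zeros of $\wh{B_q}$ at $\xi=2\pi k$ ($k\ne 0$) arise from the numerator $1-e^{-i\xi}$ inside $\Xi$, raised to the power $q$; the order of vanishing is governed by $\Sc q$ exactly as for $B_{\Sc q}$, because the quaternionic trigonometric factor is bounded and nonvanishing near those points. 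The main obstacle I anticipate is making statement (iii) fully rigorous in the quaternionic setting: the Strang–Fix theory and the precise order of vanishing are classically stated for scalar-valued transforms, so I would need to check that raising $1-e^{-i\xi}$ to a quaternionic power does not corrupt the local vanishing structure, and that the noncommutativity of $\bH_{\mathbb C}$ does not interfere with the reproduction identities. Since the paper asserts these follow ``directly from the corresponding properties of fractional B-splines,'' the intended argument is precisely this reduction, and I would present it as an inheritance of the scalar Strang–Fix conditions through the factorization \eqref{Bhat}.
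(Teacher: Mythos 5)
Your proposal is correct and takes essentially the same route as the paper, which offers no written proof at all but simply asserts that the properties ``follow directly from the corresponding properties of fractional B-splines \cite{UB}''; the reduction you carry out --- factoring $\wh{B_q}$ as $\Xi(\xi)^{\Sc q}$ times a quaternionic factor of modulus $\sqrt{\cosh(|v|\arg\Xi(\xi))}\le\sqrt{\cosh(\pi|v|)}$ and inheriting decay, Bessel-potential membership, and the Strang--Fix zero structure from $B_{\Sc q}$ --- is precisely the intended mechanism, and it reuses the very bound the paper establishes in \eqref{eq2.3a}. (The only place you are looser than one might like is the $H^{s,p}$ claim for $p\neq 2$, where pointwise domination of the Fourier transform alone does not settle membership, but the paper itself supplies nothing more.)
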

\section{Quaternionic Binomial Expansions}

Given $q=a+v\in{\mathbb H}_{\mathbb R}$, $a := \Sc (q)$, and $j\in \N$, we define the \emph{quaternionic Pochhammer symbol} by
\[
(q)_j := q(q-1)\cdots (q-j+1),
\]
and the \emph{quaternionic binomial coefficient} by
\[
\binom{q}{j} := \dfrac{(q)_j}{j!}. 
\]
We also introduce the quaternionic Gamma function $\Gamma$ by setting
\be\label{gamma}
\Gamma (q) := \int_0^\infty t^{a-1}\cos (|v|\log t) e^{-t} dt + \frac{v}{|v|}\,\int_0^\infty t^{a-1}\sin (|v|\log t)] e^{-t} dt.
\ee 
Note that if $q\in{\mathbb H}_{\mathbb R}$, then $\Gamma (q)\in{\mathbb H}_{\mathbb R}$. The integrals on the right-hand side converge since
\begin{align*}
|\Gamma (q)|^2 & = \Gamma (q)\overline{\Gamma (q)}\\ 
& = \left( \int_0^\infty t^{a-1}\cos (|v|\log t) e^{-t} dt\right)^2 + \left( \int_0^\infty t^{a-1}\sin (|v|\log t) e^{-t} dt\right)^2\\
& \leq \left( \int_0^\infty t^{a-1}e^{-t} dt\right)^2 + \left( \int_0^\infty t^{a-1} e^{-t} dt\right)^2 = 2\, |\Gamma (a)|^2.
\end{align*}
The integrals 
\[
\int_0^\infty t^{a-1}\cos (|v|\log t) e^{-t} dt\quad\text{and}\quad\int_0^\infty t^{a-1}\sin (|v|\log t)] e^{-t} dt
\] 
can be explicitly computed yielding
\[
\tfrac12(\Gamma (a - i |v|) + \Gamma (a+i|v|)\quad\text{and}\quad \tfrac{i}{2}(\Gamma (a - i |v|) - \Gamma (a+i|v|),
\]
respectively. Hence, the quaternionic Gamma function \eqref{gamma} can be extended to include values $a\in\R_-\setminus \Z_-$.

Using \eqref{eq2.1}, we write \eqref{gamma} in the more succinct notation
\be\label{eq2.3}
\Gamma (q) := \int_0^\infty t^{q-1} e^{-t} dt.
\ee
Integration by parts in the integral  \eqref{eq2.3}, where we use the differentiation formula (\ref{diff/int formulae}), %differentiate the terms $t^{a-1}\cos (|v|\log t)$ and $t^{a-1}\sin (|v|\log t)$, 
produces the functional equation
\be\label{recur}
\Gamma (q+1) = q \, \Gamma (q), \qquad \Sc (q) > 0.
\ee
For the quaternionic Pochhammer symbol and the quaternionic binomial coefficient we thus have
\begin{align*}
(q)_j = \frac{\Gamma (q+1)}{\Gamma (q-j+1)}\qquad\text{and}\qquad \binom{q}{j} = \frac{\Gamma(q+1)}{\Gamma (q-j+1) \Gamma (j)}.
\end{align*}
Below, we require an asymptotic estimate of the quaternionic Gamma function. For this purpose we first derive the following Gau{\ss}-type limit representation of $\Gamma (q)$.

\begin{lemma}
Let $q = a + v\in \bH_{\mathbb R}$ with $a > 0$. Then
\be\label{gauss}
\Gamma (q) = \lim_{n\to\infty} \frac{1\cdot 2 \cdots n}{q(q+1)\cdots (q+n)}\,n^q.
\ee
\end{lemma}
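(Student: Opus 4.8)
The plan is to reduce \eqref{gauss} to two instances of the classical Gauss limit for the complex Gamma function by diagonalizing the (commutative) subalgebra of $\bH_{\mathbb C}$ in which the entire computation takes place. We may assume $v\neq 0$, since for $v=0$ the quaternion $q=a$ is real and \eqref{gauss} is the ordinary Gauss formula. Set $\omega:=|v|>0$ and $u:=v/|v|$, so that $u^2=-1$ and $q=a+\omega u$. Because the complex unit $i$ is central in $\bH_{\mathbb C}$, the elements $1$ and $u$ generate a commutative subalgebra, and
\[
P_\pm := \tfrac12(1\pm i u)
\]
are orthogonal idempotents: one checks directly that $P_+^2=P_+$, $P_-^2=P_-$, $P_+P_-=P_-P_+=0$, $P_++P_-=1$, and moreover that $uP_\pm=\mp\,iP_\pm$. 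Every quantity occurring in \eqref{gauss} lies in this subalgebra, so each side may be computed componentwise against $P_+$ and $P_-$.

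First I would record the spectral form of $\Gamma(q)$ itself. Inserting the two explicit integral evaluations obtained above into the definition \eqref{gamma} and collecting terms gives
\[
\Gamma(q)=\Gamma(a-i\omega)\,P_++\Gamma(a+i\omega)\,P_-.
\]
Next I would exhibit the analogous splitting of $q$ and its powers. From $uP_\pm=\mp iP_\pm$ one obtains $(q+k)P_\pm=(a\mp i\omega+k)P_\pm$ for every integer $k\in\{0,1,\dots,n\}$, while the definition \eqref{eq2.1} of $n^q$ together with the same relation yields $n^qP_\pm=n^{a\mp i\omega}P_\pm$, the right-hand powers being ordinary complex powers.

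Consequently the finite-$n$ expression $R_n(q)$ on the right of \eqref{gauss} factors through the idempotents. Since $q(q+1)\cdots(q+n)\,P_\pm=\big[(a\mp i\omega)(a\mp i\omega+1)\cdots(a\mp i\omega+n)\big]P_\pm$ and the bracketed scalar is nonzero (as $a>0$), the required inverse exists and distributes across $P_+$ and $P_-$, giving
\[
R_n(q)=R_n^{\C}(a-i\omega)\,P_++R_n^{\C}(a+i\omega)\,P_-,\qquad R_n^{\C}(s):=\frac{n!\,n^{s}}{s(s+1)\cdots(s+n)}.
\]
Letting $n\to\infty$ and applying the classical Gauss limit $R_n^{\C}(s)\to\Gamma(s)$ (valid since $\Re s=a>0$) to each component, then recombining via the displayed spectral form of $\Gamma(q)$, produces \eqref{gauss}.

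The analytic content is thus entirely classical, and I expect the single genuine obstacle to be the algebraic bookkeeping rather than any convergence estimate. Specifically, I must justify that forming the inverse of $q(q+1)\cdots(q+n)$ and splitting it over $P_+$ and $P_-$ is legitimate; this rests on the observation that every element in sight lies in the commutative algebra $\C\oplus\C u\cong\C\times\C$ determined by the idempotents $P_\pm$, so that products, powers, and inverses all reduce to scalar multiples of $P_+$ and $P_-$. Once the relations $uP_\pm=\mp iP_\pm$ and the spectral form of $\Gamma(q)$ are in hand, the remaining steps are routine; care is needed only to treat the case $v=0$ separately and to check the nonvanishing of the scalar products that guarantees invertibility. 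A more hands-on alternative, mimicking the classical proof through $\Gamma(q)=\lim_{n\to\infty}\int_0^n t^{q-1}(1-t/n)^n\,dt$ and integration by parts via \eqref{diff/int formulae}, is also available, but it ultimately reduces to the same componentwise analysis and is less transparent.
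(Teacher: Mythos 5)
Your argument is correct, but it is genuinely different from the paper's. The paper mimics the classical real-variable proof directly in the quaternionic setting: it defines $F(q,n)=\int_0^\infty (1-t/n)^n\chi_{[0,n]}(t)\,t^{q-1}\,dt$, applies dominated convergence to identify $\lim_n F(q,n)=\Gamma(q)$, substitutes $u=t/n$, and integrates by parts $n$ times using the quaternionic differentiation rule \eqref{diff/int formulae} to obtain $F(q,n)=n^q\,n!/(q(q+1)\cdots(q+n))$. You instead diagonalize the commutative subalgebra $\mathrm{span}_{\mathbb C}\{1,u\}\subset{\mathbb H}_{\mathbb C}$ via the idempotents $P_\pm=\tfrac12(1\pm iu)$ and reduce everything to two copies of the classical complex Gauss limit at $a\mp i\omega$. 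Your algebra checks out: $uP_\pm=\mp iP_\pm$ gives $(q+k)P_\pm=(a\mp i\omega+k)P_\pm$ and $n^qP_\pm=n^{a\mp i\omega}P_\pm$, the spectral form $\Gamma(q)=\Gamma(a-i\omega)P_++\Gamma(a+i\omega)P_-$ agrees with the paper's explicit evaluation of the two integrals in \eqref{gamma}, and invertibility of $q(q+1)\cdots(q+n)$ follows from $a>0$. What each approach buys: the paper's proof is self-contained within quaternionic calculus and needs no appeal to the complex Gauss formula, at the cost of justifying repeated quaternionic integration by parts; yours outsources all convergence analysis to the classical theorem and makes the structure transparent — it is essentially the same diagonalization the paper performs later (with the matrix $A$ and eigenvalues $w=a\pm i|v|$) in the proof of the binomial theorem and in the remark $q_j=\Re(w_j)+\frac{v}{|v|}\Im(w_j)$, so your method would unify several arguments in the paper. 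Two small points of care, both of which you flag: the case $v=0$ must be treated separately since \eqref{eq2.1} involves $v/|v|$, and one should note explicitly that the classical Gauss limit is being invoked for complex arguments with positive real part, which is standard.
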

\begin{proof}
The proof employs the ideas outlined in \cite[10.1]{arfken}. Set
\begin{equation}
F(q,n) := \int_0^\infty \left(1 - \frac{t}{n}\right)^n\,\chi_{[0,n]} (t)\, t^{q-1}\,dt,\label{F defn}
\end{equation}
where $\chi$ denotes the characteristic function. The dominated convergence theorem implies that
\[
\lim_{n\to\infty} F(q,n) = \int_0^\infty e^{-t}\, t^{q-1}\,dt = \Gamma (q).
\]
Changing variables to $u := \dfrac{t}{n}$ in (\ref{F defn}) gives $F(q,n) = n^q \,\int\limits_0^1 (1-u)^n u^{q-1} \,du$ and
Integrating by parts yields
\begin{align*}
F(q,n) & = n^q \left[(1-u)^n q^{-1} u^q \Big\vert_0^1 + \frac{n}{q}\,\int_0^1  (1-u)^n u^{q} \,du\right]\\
& = n^q\,\frac{n}{q}\,\int_0^1  (1-u)^n u^{q} \,du.
\end{align*}
Repeating $(n-1)$--times gives
\begin{align*}
F(q,n) & = n^q\,\frac{n(n-1)\cdots 1}{q(q+1)\cdots (q+n-1)}\,\int_0^1  u^{q+n-1} \,du\\
& = n^q\,\frac{n(n-1)\cdots 1}{q(q+1)\cdots (q+n)},
\end{align*}
which implies \eqref{gauss}.
\end{proof}

From \eqref{recur}, one obtains the following asymptotic behavior of $\Gamma (q)$: if $n\in \N$,
\begin{align}
\frac{\Gamma (q + n)}{\Gamma (q)} = q (q+1) \cdots (q+n-1) = q^n (1 + \cO (q^{-1})), \quad \Sc (q) > 0.\label{Gamma quotient}
\end{align}

We have the following quaternionic binomial expansion.

\begin{theorem}\label{thm0}
Let $q=a+v\in{\mathbb H}_{\mathbb R}$ with $a>0$ and $z\in {\mathbb C}$ with $|z| \leq 1$. Then 
\be\label{qbin}
(1+z)^q=\sum_{j=0}^\infty\binom{q}{j}z^j.
\ee
\end{theorem}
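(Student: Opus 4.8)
The plan is to exploit the fact that, although $\bH_{\mathbb R}$ is noncommutative, the order $q=a+v$ generates a commutative subalgebra in which $z^q$ behaves like an ordinary power, and thereby reduce (\ref{qbin}) to two copies of the classical complex binomial theorem. If $v=0$ then $q=a\in\R$ and (\ref{qbin}) is the usual binomial series, so assume $v\neq 0$ and put $\hat v:=v/|v|$. By (\ref{vec mult}) one has $\hat v^2=v^2/|v|^2=-1$, and since the complex unit $i$ commutes with $e_1,e_2,e_3$, the algebra $\C[\hat v]$ contains the two commuting square roots of $-1$, namely $i$ and $\hat v$. I would then introduce the complex numbers $\zeta_\pm:=a\pm i|v|$ together with the elements $P_\pm:=\tfrac12(1\mp i\hat v)$, which satisfy $P_\pm^2=P_\pm$, $P_+P_-=0$ and $P_++P_-=1$; these are orthogonal idempotents, and $q=P_+\zeta_++P_-\zeta_-$.

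The first step is to re-express the defining formula (\ref{eq2.1}). Writing $\cos$ and $\sin$ through Euler's formula and using $\hat v^2=-1$, a short computation gives, for any $w$ for which the principal power is defined,
\[
w^q=P_+\,w^{\zeta_+}+P_-\,w^{\zeta_-},
\]
where $w^{\zeta_\pm}$ are ordinary complex powers. Applying the same orthogonal splitting to each factor $q-k=P_+(\zeta_+-k)+P_-(\zeta_--k)$ of the quaternionic Pochhammer symbol, and using $P_+P_-=0$ to kill all cross terms, I obtain
\[
\binom{q}{j}=P_+\binom{\zeta_+}{j}+P_-\binom{\zeta_-}{j}.
\]
Hence both sides of (\ref{qbin}) decompose along $P_+$ and $P_-$, and, since $P_\pm$ are constants, the quaternionic identity is equivalent to the pair of scalar identities $(1+z)^{\zeta_\pm}=\sum_{j\geq0}\binom{\zeta_\pm}{j}z^j$, taking $w=1+z$.

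It then remains to invoke the complex binomial theorem for each exponent $\zeta_\pm$. For $|z|<1$ this is classical. The point of the hypothesis $a>0$ is the boundary circle $|z|=1$, which the statement allows: since $\Re\zeta_\pm=a>0$, standard Gamma-function asymptotics (cf. (\ref{Gamma quotient})) give $|\binom{\zeta_\pm}{j}|=O(j^{-a-1})$, so both series converge absolutely on the closed disc and, by Abel's theorem together with the continuity of $w\mapsto w^{\zeta_\pm}$, converge to $(1+z)^{\zeta_\pm}$; at the single point $z=-1$ both sides vanish because $a>0$. Recombining the two components through $P_+$ and $P_-$ yields (\ref{qbin}).

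The main obstacle is precisely this boundary analysis on $|z|=1$: away from the circle the result is immediate once the commutative splitting is in place, but on it one must control the decay of the binomial coefficients and pass to the limit, which is exactly where $a=\Sc(q)>0$ is used. A secondary point to check carefully is that $w=1+z$ with $|z|\leq 1$ satisfies $\Re w\geq 0$, so the principal branch underlying (\ref{eq2.1}) is unambiguous and the decomposition $w^q=P_+w^{\zeta_+}+P_-w^{\zeta_-}$ is legitimate. An alternative route that avoids the idempotents is to set $f(z):=(1+z)^q$, observe via (\ref{diff/int formulae}) that $f^{(j)}(0)=(q)_j$ because $f$ takes values in the commutative algebra $\C[\hat v]$, and then identify $f$ with its Taylor series; the convergence analysis is then identical.
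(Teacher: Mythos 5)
Your proof is correct, and it takes a genuinely different route from the paper's. The paper writes $(1+z)^q=f_0(z)+\tfrac{v}{|v|}f_1(z)$ with $f_0,f_1$ analytic on $|z|<1$, computes their Taylor coefficients through a $2\times 2$ matrix recursion $\gamma_{j+1}=(A-jI)\gamma_j$ with $A=\left(\begin{smallmatrix}a&|v|\\-|v|&a\end{smallmatrix}\right)$, and diagonalizes $A$ (over $\bH_{\mathbb R}$ or $\C$) to identify the coefficients with $\binom{q}{j}$. Your idempotent splitting $q=P_+\zeta_++P_-\zeta_-$, $P_\pm=\tfrac12(1\mp i\hat v)$, accomplishes the same identification algebraically, with no derivative computation: the verification that $P_\pm$ are orthogonal idempotents in the commutative subalgebra $\C[\hat v]\subset\bH_{\mathbb C}$, that $w^q=P_+w^{\zeta_+}+P_-w^{\zeta_-}$, and that $(q)_j=P_+(\zeta_+)_j+P_-(\zeta_-)_j$ all check out, and linear independence of $P_\pm$ over $\C$ lets you split the identity into two classical complex binomial theorems. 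This is essentially the content of the paper's Remark following the theorem (the formula $q_j=\Re(w_j)+\tfrac{v}{|v|}\Im(w_j)$ with $w=a+i|v|$ is your decomposition in real form), promoted from an afterthought to the engine of the proof. Your approach buys brevity and a cleaner boundary argument: the decay $|\binom{\zeta_\pm}{j}|=O(j^{-a-1})$ is the standard asymptotic and immediately gives absolute convergence on $|z|=1$, whereas the paper's bound $\sum_j|\binom{q}{j}|\leq c\,e^{|q|}$ is obtained by summing the fixed-$j$, large-$q$ asymptotic \eqref{Gamma quotient} over all $j$, which is less transparent. One small caveat on your side: \eqref{Gamma quotient} is likewise an asymptotic in $q$ for fixed $n$, so it is not really the right citation for your $O(j^{-a-1})$ claim; you should instead invoke $\binom{\zeta}{j}\sim\frac{(-1)^j}{\Gamma(-\zeta)}\,j^{-\zeta-1}$ (or Stirling, or the Gau{\ss} limit formula \eqref{gauss}) to justify the decay in $j$. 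The point $z=-1$ also deserves the one line you give it, since \eqref{eq2.1} requires $\log(1+z)$ and so $(1+z)^q$ must be interpreted as the limit $0$ there; the paper is silent on this.
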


\begin{proof} Let $f(z)=(1+z)^q=(1+z)^a[\cos (|v|\log (1+z))+\dfrac{v}{|v|}\sin (|v|\log (1+z))]$ and $f_0(z)=(1+z)^a\cos (|v|\log (1+z))$, $f_1(z)=(1+z)^a\sin(|v|\log (1+z))$ so that $f(z)=f_0(z)+\dfrac{v}{|v|}f_1(z)$. Since $f_0$ is analytic on the open unit ball $\{z\in C;\, |z|<1\}$, we have
\[
f_0(z)=\sum_{j=0}^\infty\frac{f_0^{(j)}(0)}{j!}z^j,\qquad\qquad (|z|<1).
\]
Note that 
$$f_0'(z)=(1+z)^{a-1}[a\cos (|v|\log (1+z))-|v|\sin (|v|\log (1+z))].$$
In fact, there are constants $\alpha_j$ and $\beta_j$ such that 
\begin{equation}
f_0^{(j)}(z)=(1+z)^{a-j}[\alpha_j\cos (|v|\log (1+z))+\beta_j\sin (|v|\log (1+z))].\label{jth deriv}
\end{equation}
Let $\gamma_j=\left(\begin{matrix}\alpha_j\\\beta_j\end{matrix}\right)$ and observe that $\gamma_0=\left(\begin{matrix}1\\0\end{matrix}\right)$. Differentiating both sides of (\ref{jth deriv}) with respect to $z$ gives
\begin{align*}
f_0^{(j+1)}(z) & =(1+z)^{a-j-1}[((a-j)\alpha_j+|v|\beta_j)\cos(|v|\log (1+z))\\
& \qquad +(-|v|\alpha_j+(a-j)\beta_j)\sin (|v|\log (1+z))]
\end{align*}
so that 
$$\left(\begin{matrix}\alpha_{j+1}\\\beta_{j+1}\end{matrix}\right)=\left(\begin{matrix}a-j&|v|\\-|v|&a-j\end{matrix}\right)\left(\begin{matrix}\alpha_j\\\beta_j\end{matrix}\right),$$
i.e., $\gamma_{j+1}=(A-jI)\gamma_j$ where $A=\left(\begin{matrix}a&|v|\\-|v|&a\end{matrix}\right)$. Hence
$$\gamma_j=(A-(j-1)I)\gamma_{j-1}=(A-(j-1)I)(A-(j-2)I)\gamma_{j-2}=\cdots =\prod_{\ell =0}^{j-1}(A-\ell I)\gamma_0.$$
The matrix $A$ may be diagonalized over ${\mathbb C}$ or ${\mathbb H}_\R$. We note that $A$ has quaternionic eigenvalues $q=a+v$ with corresponding eigenvector $\left(\begin{matrix}-v/|v|\\1\end{matrix}\right)$ and $\bar q=a-v$ with corresponding eigenvector $\left(\begin{matrix}v/|v|\\1\end{matrix}\right)$. Note that the eigendecomposition of $A$ is not unique. Let $P=\left(\begin{matrix}-v/|v|&v/|v|\\1&1\end{matrix}\right)$. Then $P^{-1}=\dfrac{1}{2}\left(\begin{matrix}v/|v|&1\\-v/|v|&1\end{matrix}\right)$ and
$P^{-1}AP=D=\left(\begin{matrix}q&0\\0&\bar q\end{matrix}\right)$.
Hence,
\begin{align*}
\prod_{\ell =0}^{j-1}(A-\ell I)&=\prod_{\ell =0}^{j-1}(PDP^{-1}-\ell I)\\
&=P\prod_{\ell =0}^{j-1}(D-\ell I)P^{-1}\\
&=P\prod_{\ell =0}^{j-1}\left(\begin{matrix}q-\ell&0\\0&\bar q-\ell\end{matrix}\right)P^{-1}\\
&=P\left(\begin{matrix}\prod_{\ell =0}^{j-1}(q-\ell )&0\\0&\prod_{\ell =0}^{j-1}(\bar q-\ell )\end{matrix}\right)P^{-1}\\
&=P\left(\begin{matrix}q_j&0\\0&\bar q_j\end{matrix}\right)P^{-1}=\frac{1}{2}\left(\begin{matrix}q_j+\bar q_j&-\frac{v}{|v|}(q_j-\bar q_j)\\\frac{v}{|v|}(q_j-\bar q_j)&q_j+\bar q_j\end{matrix}\right).
\end{align*}
Hence $\gamma_j=\dfrac{1}{2}\left(\begin{matrix}q_j+\bar q_j\\\frac{v}{|v|}(q_j-\bar q_j)\end{matrix}\right)$ and $f_0^{(j)}(0)=\alpha_j=\dfrac{1}{2}(q_j+\bar q_j)$ so that $f_0$ has Taylor expansion 
\begin{equation}
f_0(z)=\dfrac{1}{2}\sum_{j=0}^\infty (q_j+\bar q_j)\dfrac{z^j}{j!}, \qquad |z| < 1.\label{f_0}
\end{equation}
We similarly have $f_1^{(j)}(z)=(1+z)^{a-j}[\alpha'_j\cos (|v|\log (1+z))+\beta'_j\sin (|v|\log (1+z))]$. With $\gamma'_j=\left(\begin{matrix}\alpha'_j\\\beta'_j\end{matrix}\right)$ and $\gamma'_0=\left(\begin{matrix}0\\1\end{matrix}\right)$, we find that 
 \begin{equation}
 f_1(z)=-\frac{v}{2|v|}\sum_{j=0}^\infty(q_j-\bar q_j)\frac{z^j}{j!},\label{f_1}
 \end{equation}
for all $|z| < 1$. Combining (\ref{f_0}) and (\ref{f_1}) gives
\be
f(z)=f_0(z)+\frac{v}{|v|}f_1(z)=\sum_{j=0}^\infty\binom{q}{j}z^j,\label{f}
\ee
for all $|z| <1$. %The Quotient Test \cite{MGS}, however, shows that \eqref{f} converges absolutely for $|z| < 1$.

Now, we have the following estimate as $q\to \infty$ along any line connecting the origin with $\infty$:
\begin{align*}
\left| \sum_{j=0}^\infty\binom{q}{j}\right| &\leq \sum_{j=0}^\infty \left|\binom{q}{j}\right| = \sum_{j=0}^\infty\left|\frac{\Gamma (q+1)}{\Gamma (q-j+1)}\right|\,\frac{1}{j!}\\
& = \sum_{j=0}^\infty\left|q^j (1 + \cO(q^{-1}))\right|\,\frac{1}{j!} \leq c\,e^{|q|},
\end{align*}
where we have used the estimate (\ref{Gamma quotient}). Here $c$ denotes a positive constant. Hence, in \eqref{f} we can let $|z|\to 1^-$. 
\end{proof}

\begin{remark} As mentioned in the above proof,  the matrix $A$ may be diagonalized over ${\mathbb C}$. The complex eigenvalues of $A$ are $w=a+i|v|$ with associated eigenvector $\left(\begin{matrix}1\\i\end{matrix}\right)$ and $\overline{w}=a-i|v|$ with associated eigenvector $\left(\begin{matrix}1\\-i\end{matrix}\right)$. We then compute the product $\prod_{\ell =0}^{j-1} (A-\ell I)$ to be 
$$\prod_{\ell =0}^{j-1}(A-\ell I)=\left(\begin{matrix}\Re (w_j)&\Im (w_j)\\-\Im (w_j)&\Re (w_j)\end{matrix}\right)$$
where $w_j$ is the Pochhammer product associated with the complex number $w$. As a consequence we have the expansion 
\begin{equation}
(1+z)^q=\sum_{j=0}^\infty\bigg[\Re (w_j)+\frac{v}{|v|}\Im (w_j)\bigg]\frac{z^j}{j!}\label{complex exp}
\end{equation}
and comparing (\ref{complex exp}) with (\ref{qbin})  we have
\begin{equation}
q_j=\Re (w_j)+\frac{v}{|v|}\Im (w_j)\label{poch eq}
\end{equation}
where $q=a+v\in{\mathbb H}_{\mathbb R}$ and $w=a+i|v|\in{\mathbb C}$. 
We note that (\ref{poch eq}) may be used to compute the quaternionic Pochhammer symbol $q_j$ using complex algebra only.
\end{remark}

\begin{corollary}
For $q\in \bH_{\mathbb R}$ with $\Sc (q) > 0$, 
\[
2^q = \sum_{j=0}^\infty\binom{q}{j}\quad\text{and}\quad 0 = \sum_{j=0}^\infty (-1)^j \binom{q}{j}.
\]
\end{corollary}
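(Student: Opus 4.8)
The plan is to obtain both identities as boundary evaluations of the quaternionic binomial expansion \eqref{qbin} proved in Theorem \ref{thm0}, which is already asserted to hold on the closed disk $|z| \le 1$. First I would substitute $z = 1$ into \eqref{qbin}: the left-hand side becomes $(1+1)^q = 2^q$ and the right-hand side becomes $\sum_{j=0}^\infty \binom{q}{j} 1^j = \sum_{j=0}^\infty \binom{q}{j}$, which gives the first identity at once. For the second, I would substitute $z = -1$: the right-hand side becomes $\sum_{j=0}^\infty (-1)^j \binom{q}{j}$, while the left-hand side becomes $(1-1)^q = 0^q$.

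The only genuine point to settle is the value of the boundary term $0^q$. Since $\Sc(q) = a > 0$, I would note that the trigonometric factor in \eqref{eq2.1} has modulus exactly $1$ (because $v/|v|$ is a unit imaginary quaternion, so $|\cos\theta + \tfrac{v}{|v|}\sin\theta| = 1$ for real $\theta$), whence $|t^q| = t^a \to 0$ as $t \to 0^+$. Thus $(1+z)^q \to 0$ as $z \to -1$ from within the unit disk, so the boundary value is $0^q = 0$, and the second identity follows.

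The main obstacle — already dispatched inside the proof of Theorem \ref{thm0} — is the legitimacy of evaluating the series on the boundary circle $|z| = 1$ rather than only for $|z| < 1$. There the estimate $\sum_{j=0}^\infty |\binom{q}{j}| \le c\, e^{|q|} < \infty$ shows that $\sum_j \binom{q}{j} z^j$ converges absolutely and uniformly on $\{|z| \le 1\}$, so the sum is continuous up to the boundary and its values at $z = \pm 1$ coincide with the radial limits of $(1+z)^q$. Since Theorem \ref{thm0} already records the validity of \eqref{qbin} for $|z| \le 1$, no further analytic work is required beyond identifying the two boundary values $2^q$ and $0^q = 0$.
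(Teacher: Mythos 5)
Your proof is correct and follows exactly the route the paper intends: the corollary is stated as an immediate consequence of Theorem \ref{thm0} by evaluating \eqref{qbin} at $z=1$ and $z=-1$, with the validity on the boundary circle already secured by the estimate $\sum_j |\binom{q}{j}| \le c\,e^{|q|}$ inside the proof of that theorem. Your additional care in justifying $0^q=0$ via $|t^q|=t^a\to 0$ for $a>0$ fills in a detail the paper leaves implicit, but does not change the argument.
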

\section{Time Domain Representation of Quaternionic B-Splines}
In this section, we derive the time domain representation of a quaternionic B-spline and present an alternative way of defining quaternionic B-splines. For this purpose, given the ordinary univariate differentiation operator $\sD: C^1\to C$, we define the \emph{antidifferentiation} or \emph{integration operator of quaternionic order $q$} on the Schwartz space $\cS(\R)$ by
\be
\sD^{-q} := \frac{1}{\Gamma (q)}\,\int_0^\infty t^q e^{- t \sD}\,\frac{dt}{t} =  \frac{1}{\Gamma (q)}\,\int_0^\infty t^q T_t\,\frac{dt}{t},
\ee
where  $T_t$ the translation operator $T_t f := f(\mydot - t)$.
\begin{theorem}\label{thm1}
The quaternionic B-spline $B_q$ $(q\in{\mathbb H}_{\mathbb R})$ defined in \eqref{qB} with $\Sc (q) > 1$ has the time domain representation
\begin{equation}
B_q (t) =  \frac{1}{\Gamma (q)}\,\sum_{k=0}^\infty (-1)^k \binom{q}{k} (t - k)_+^{q-1}, \quad t\in \R.\label{time domain}
\end{equation}
This equality holds in the sense of distributions and in $L^2 (\R)$. Moreover, the quaternionic B-spline satisfies the distributional differential equation
\be\label{distqB}
\sD^q B_q = \sum_{k=0}^\infty \binom{q}{k} (-1)^k\,\delta_k,
\ee
where $\delta_k$ denotes the Dirac delta distribution supported on $k\in \Z^+_0$.
\end{theorem}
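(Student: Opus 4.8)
The plan is to compute the Fourier transform of the right-hand side of \eqref{time domain} and show that it coincides with $\wh{B_q}$. The essential simplification is that, for a fixed $q=a+v$, every quantity in the argument---the coefficients $\binom{q}{k}$, the value $\Gamma(q)$, the one-sided powers $t_+^{q-1}$, and $\Xi(\xi)^q$---lies in the subalgebra $\C\oplus\C\,\tfrac{v}{|v|}$ of $\bH_{\mathbb C}$. Since $(v/|v|)^2=-1$ and $v/|v|$ commutes with the central unit $i$, the assignment $\alpha+\beta\,\tfrac{v}{|v|}\mapsto(\alpha+i\beta,\alpha-i\beta)$ is an algebra isomorphism of this subalgebra onto $\C\oplus\C$, under which $q\mapsto(w,\overline w)$ with $w=a+i|v|$ as in \eqref{poch eq}. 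Consequently every identity below splits into its $w$- and $\overline w$-components, each a purely complex statement, so the non-commutativity of $\bH$ plays no role and the order of the (now commuting) factors is immaterial.

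First I would record the Fourier multiplier of the integration operator $\sD^{-q}$. Writing $T_t$ in the frequency domain and using the succinct form \eqref{eq2.3} of the quaternionic Gamma function with $e^{-t}$ replaced by $e^{-i\xi t}$, one obtains
\begin{equation*}
\sF[t_+^{q-1}](\xi)=\int_0^\infty t^{q-1}e^{-i\xi t}\,dt=\Gamma(q)\,(i\xi)^{-q},\qquad \Sc(q)>0,
\end{equation*}
which under the isomorphism above is just the classical Gamma integral $\int_0^\infty t^{w-1}e^{-i\xi t}\,dt=\Gamma(w)(i\xi)^{-w}$ together with its conjugate (interpreted, as for the complex B-splines, through the regularization $i\xi\to\eps+i\xi$, $\eps\downarrow0$). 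In particular the symbol of $\sD^{-q}$ is $(i\xi)^{-q}$ and $\sD^q$ is the operator with symbol $(i\xi)^q$. Translation by $k$ multiplies the symbol by $e^{-ik\xi}$, so $\sF[(t-k)_+^{q-1}](\xi)=\Gamma(q)(i\xi)^{-q}e^{-ik\xi}$.

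Next I would apply the quaternionic binomial expansion of Theorem \ref{thm0} with $z=-e^{-i\xi}$ (admissible since $|z|=1$ lies on the boundary covered by that theorem) to get $(1-e^{-i\xi})^q=\sum_{k=0}^\infty\binom{q}{k}(-1)^k e^{-ik\xi}$, and the componentwise factorization $(i\xi)^{-q}(1-e^{-i\xi})^q=\bigl(\tfrac{1-e^{-i\xi}}{i\xi}\bigr)^q=\Xi(\xi)^q$, which again reduces to the complex identity valid on the branch $-\pi<\arg\Xi\le\pi$ singled out after \eqref{Bhat}. Transforming \eqref{time domain} term by term and collecting the factors $\Gamma(q)^{-1}\Gamma(q)$ then yields
\begin{equation*}
\sF\!\left[\frac{1}{\Gamma(q)}\sum_{k=0}^\infty(-1)^k\binom{q}{k}(t-k)_+^{q-1}\right](\xi)=(i\xi)^{-q}\sum_{k=0}^\infty\binom{q}{k}(-1)^k e^{-ik\xi}=\Xi(\xi)^q=\wh{B_q}(\xi).
\end{equation*}
For fixed $t$ the series in \eqref{time domain} is in fact a finite sum, since $(t-k)_+^{q-1}=0$ once $k\ge t$, so the right-hand side is defined pointwise; the identity of Fourier transforms together with $\wh{B_q}\in L^2(\R,\bH_{\mathbb C})$ (from \eqref{eq2.3a}) gives equality in $L^2$ via Plancherel, and pairing against Schwartz test functions gives it in the distributional sense. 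The differential equation \eqref{distqB} is then immediate: applying the symbol $(i\xi)^q$ of $\sD^q$ to $\wh{B_q}(\xi)=\Xi(\xi)^q$ cancels the factor $(i\xi)^{-q}$ and leaves $(1-e^{-i\xi})^q=\sum_k\binom{q}{k}(-1)^ke^{-ik\xi}$, which is exactly $\sF[\sum_k\binom{q}{k}(-1)^k\delta_k]$.

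The step I expect to require the most care is the Fourier--Gamma identity together with the justification of the term-by-term transform: the integral defining $\sF[t_+^{q-1}]$ converges only conditionally and must be read as a tempered-distribution (or $\eps\downarrow0$) limit, and one must check that the interchange of this limit with the summation---legitimized by the exponential bound on $\sum_k|\binom{q}{k}|$ established in the proof of Theorem \ref{thm0}---is valid in the $L^2$ and distributional topologies rather than merely formally.
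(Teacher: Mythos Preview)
Your argument is correct and tracks the paper's closely: both compute the Fourier transform of the series in \eqref{time domain} via the regularized identity $\int_0^\infty t^{q-1}e^{-i\xi t}\,dt=\Gamma(q)(i\xi)^{-q}$ (the paper cites Gel'fand--Shilov for exactly this step) and then close with the quaternionic binomial theorem. The paper orders the logic slightly differently---it starts from \eqref{distqB}, applies $\sD^{-q}$ to obtain \eqref{time domain}, and then Fourier-transforms---but the computational content is identical. Your explicit passage to the commutative subalgebra $\C\oplus\C\,\tfrac{v}{|v|}\cong\C\oplus\C$, sending $q\mapsto(w,\overline w)$, is a genuine clarification the paper omits: it cleanly justifies the factorization $(i\xi)^{-q}(1-e^{-i\xi})^q=\Xi(\xi)^q$, which the paper writes in its final display without comment and which is \emph{not} automatic for quaternionic powers in view of Proposition~\ref{prop1}. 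This is essentially the device recorded in the Remark following Theorem~\ref{thm0} (equation \eqref{poch eq}), put to work here where the paper itself leaves the step tacit.
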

\begin{proof}
Let a distribution $B_q$ be defined by \eqref{distqB}. Then 
\begin{align*}
B_q (t) &= \sum_{k=0}^\infty \binom{q}{k} (-1)^k\,\sD^{-q} \delta_k (t)\\
&= \frac{1}{\Gamma (q)}\,\sum_{k=0}^\infty \binom{q}{k} (-1)^k\,\int_0^\infty x^q \delta_k (t-x)\,\frac{dx}{x}\\
& = \frac{1}{\Gamma (q)}\,\sum_{k=0}^\infty \binom{q}{k} (-1)^k\, (t - k)_+^{q-1}, \quad t\in \R.
\end{align*}
We then find
\begin{align*}
\wh{B_q} (\xi) &= \frac{1}{\Gamma (q)}\,\sum_{k=0}^\infty \binom{q}{k}(-1)^k\,\int_\R e^{- i  t \xi}\,(t - k)_+^{q-1}\,dt\\
&= \frac{1}{\Gamma (q)}\,\sum_{k=0}^\infty \binom{q}{k} (-1)^k\,e^{- i k \xi}\int_0^\infty e^{- i  y \xi}\,y^{q-1}\,dy.
\end{align*}
Here, we used the Dominated Convergence Theorem to exchange sum and integral. Following the arguments presented in \cite[Section 2.3]{gelfand}, we obtain for the last integral in the above expression
\[
\int_0^\infty e^{- i  y \xi}\,y^{q-1}\,dy = \lim_{\tau\to 0+} \int_0^\infty e^{- i  y \xi}\,e^{-\tau y}\,y^{q-1}\,dy = \frac{\Gamma(q)}{(i\xi)^q},\quad\xi\in \R.
\]
Whence,
\[
\wh{B_q} (\xi) = \sum_{k=0}^\infty \binom{q}{k} (-1)^k\,\frac{e^{-i k \xi}}{(i\xi)^q} =  \left(\frac{1-e^{-i \xi}}{i \xi}\right)^q,
\]
by the quaternionic binomial theorem \ref{thm0}.
\end{proof}

\noindent
Figures \ref{fig1}--\ref{fig4} below show some examples of quaternionic B-splines.

\begin{figure}[h!]
\begin{center}
\includegraphics[width=5cm, height= 4cm]{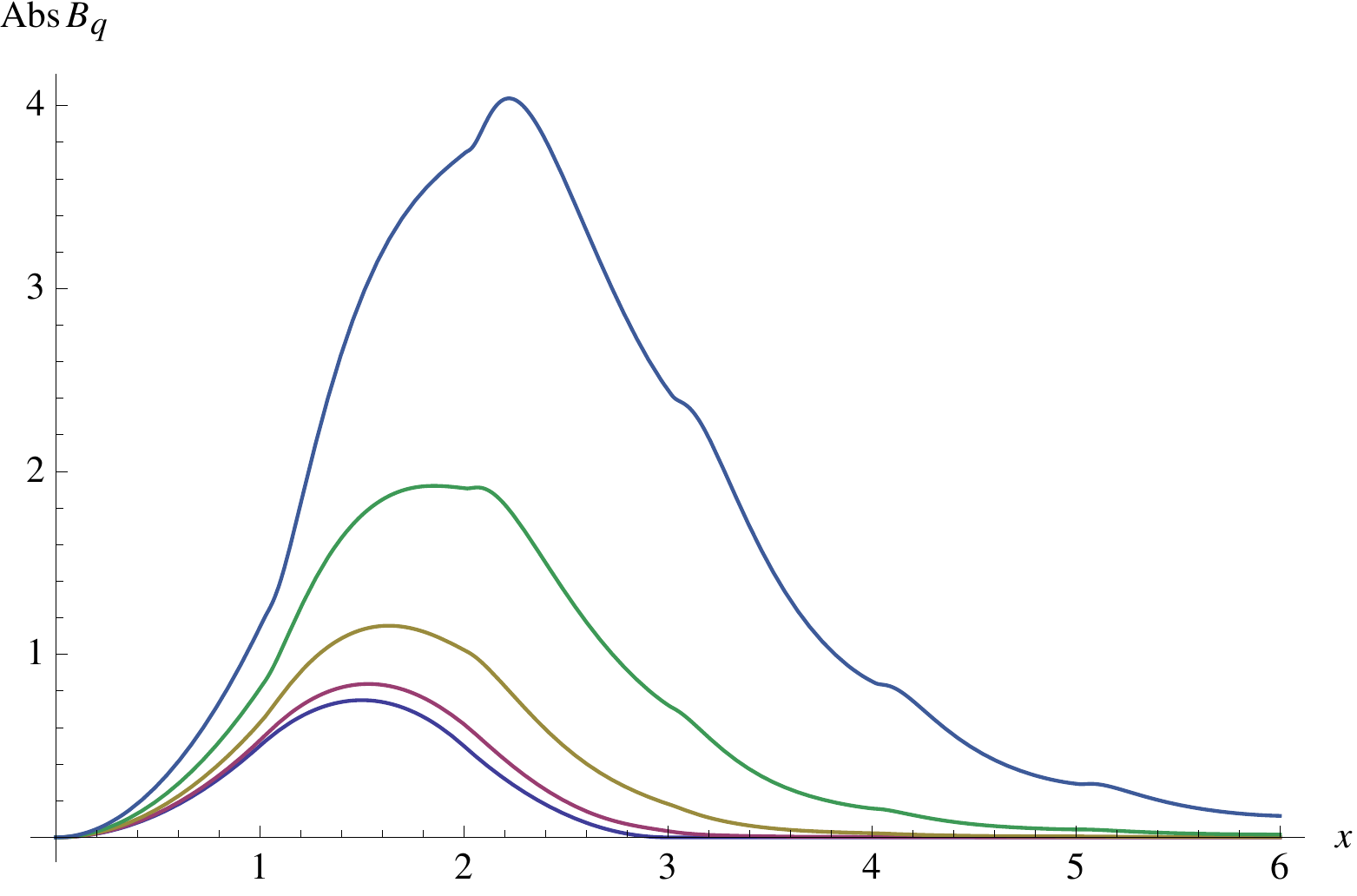}\hspace*{2cm}\includegraphics[width=5cm, height= 4cm]{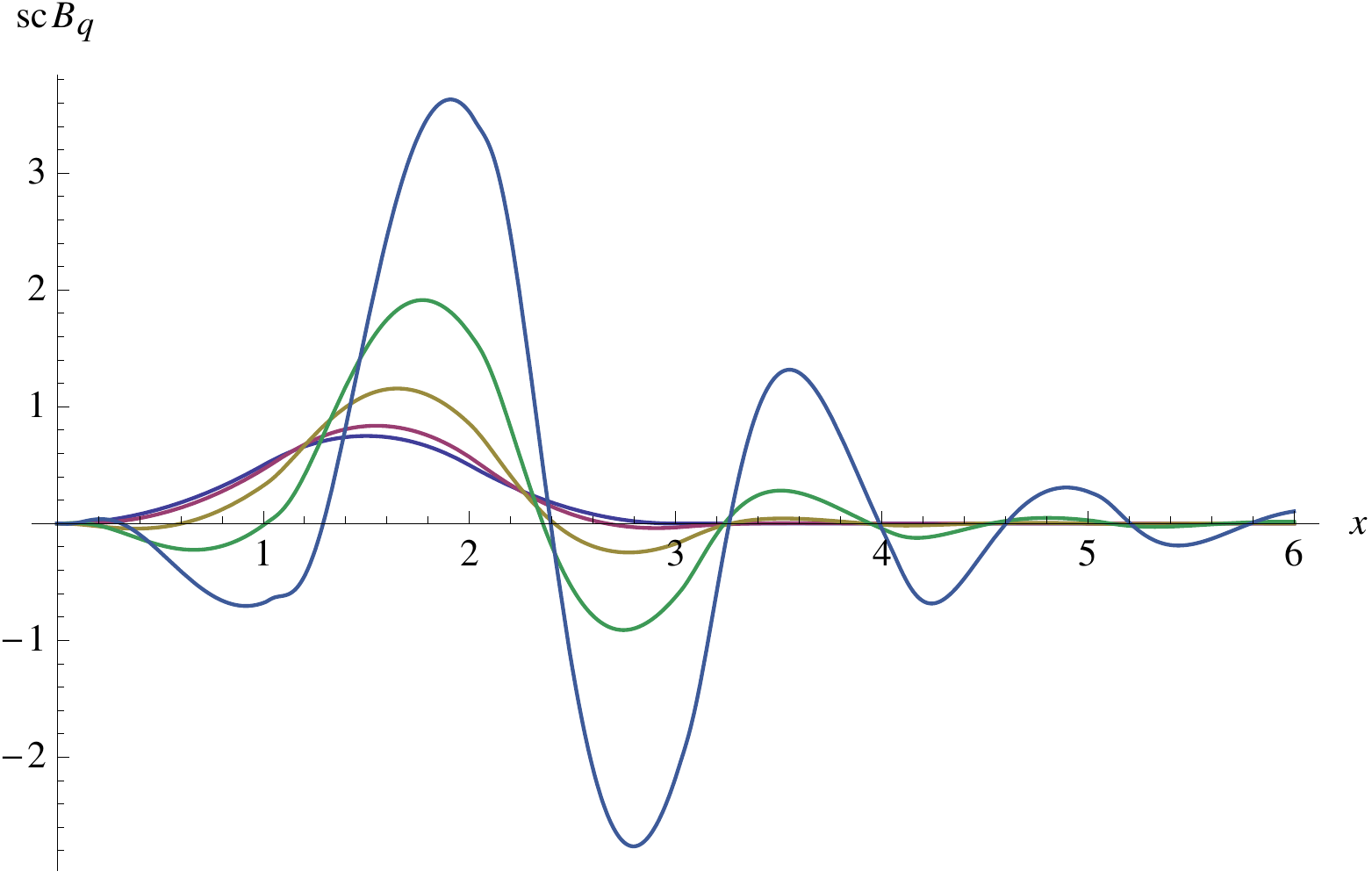}
\caption{The modulus and the scalar part of the quaternionic B-splines $B_q$ with $q = 3 + \frac{m}{5} e_1 - \frac{3m}{10} e_2 + \frac{2m}{5} e_3$, $m = 0, 1, 2, 3, 4$. The amplitudes increase with increasing $m$.}\label{fig1}
\end{center}
\end{figure}

\begin{figure}[h!]
\begin{center}
\includegraphics[width=5cm, height= 4cm]{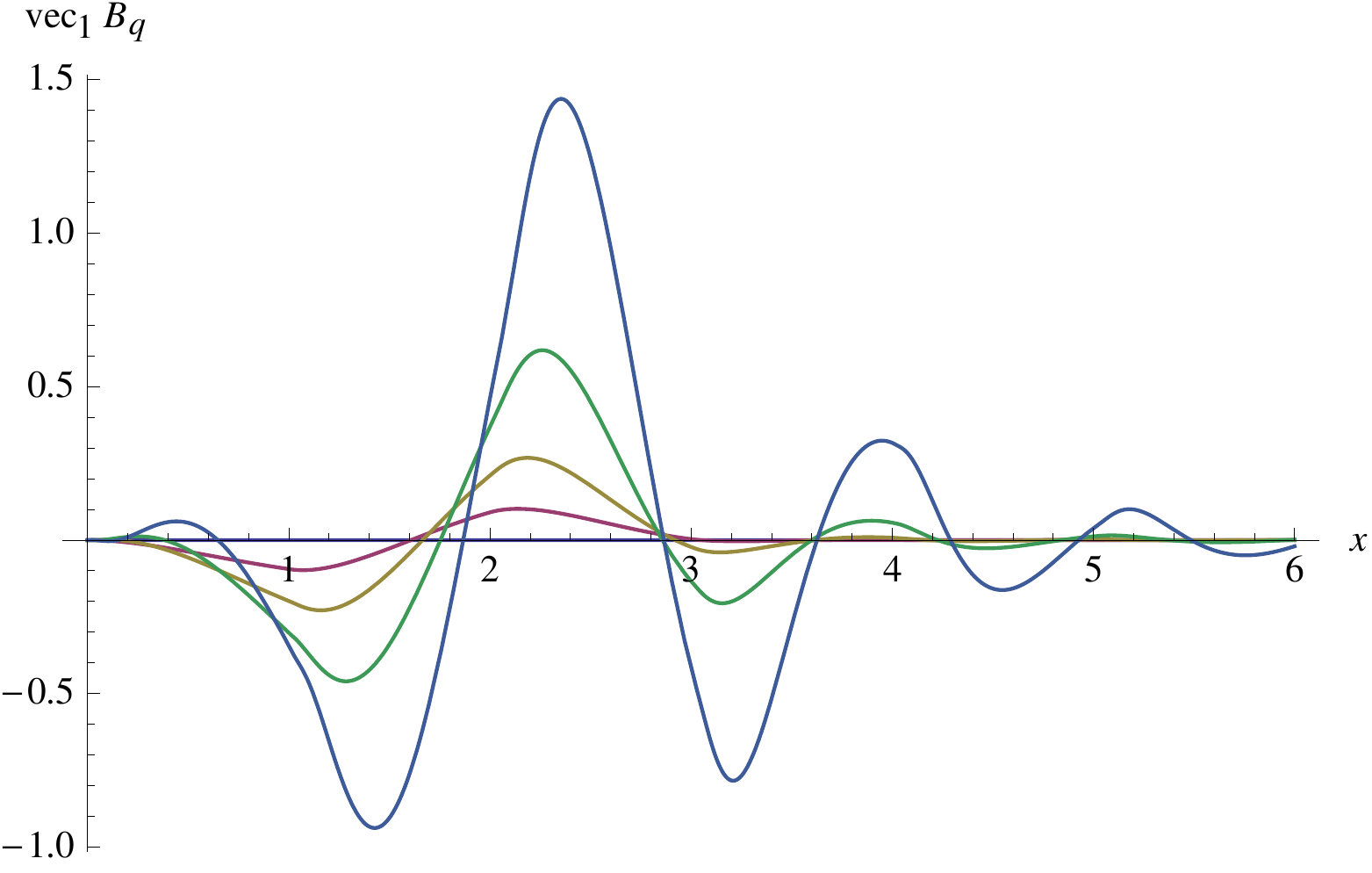}\hspace*{1cm}\includegraphics[width=5cm, height= 4cm]{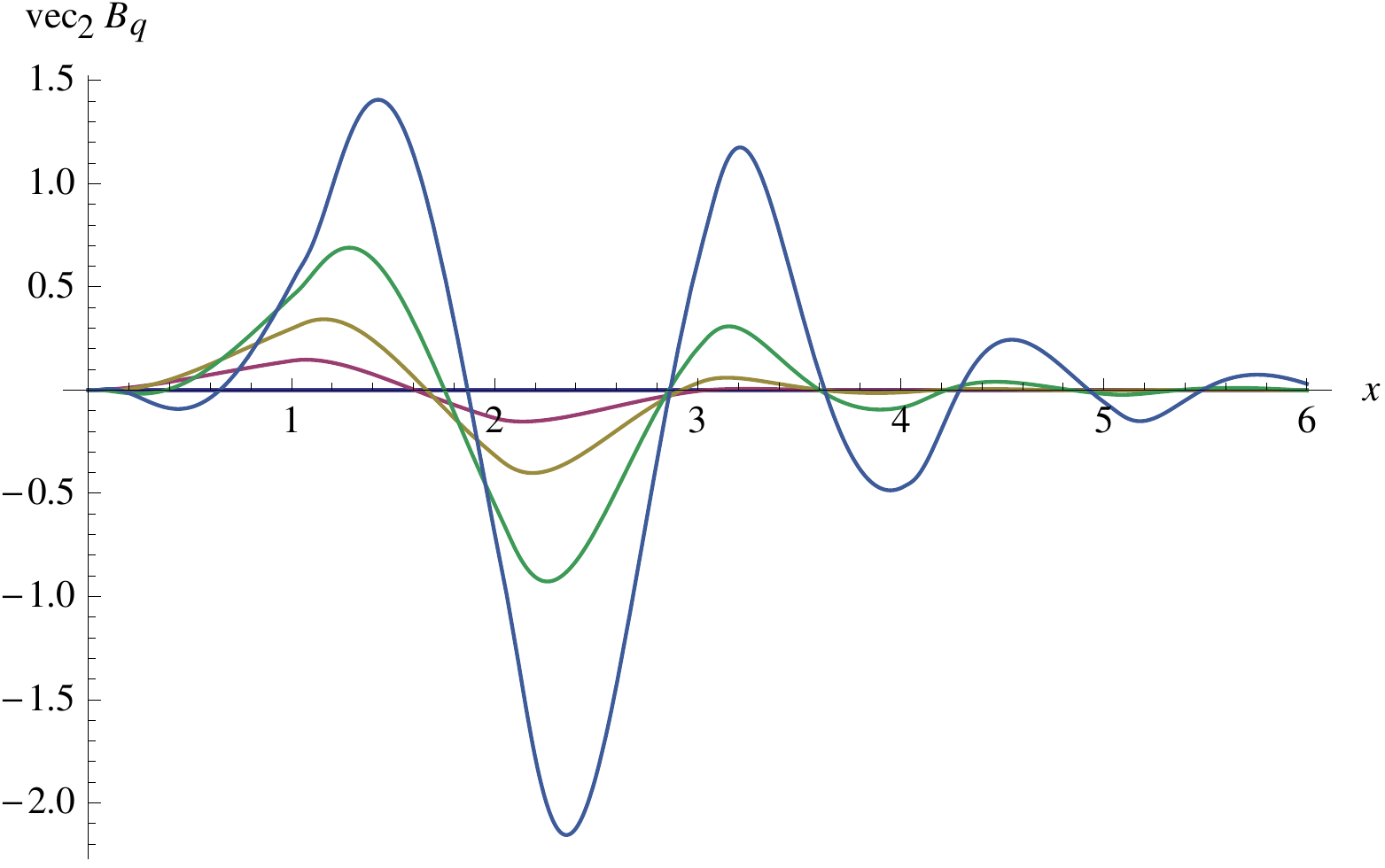}\\\includegraphics[width=5cm, height= 4cm]{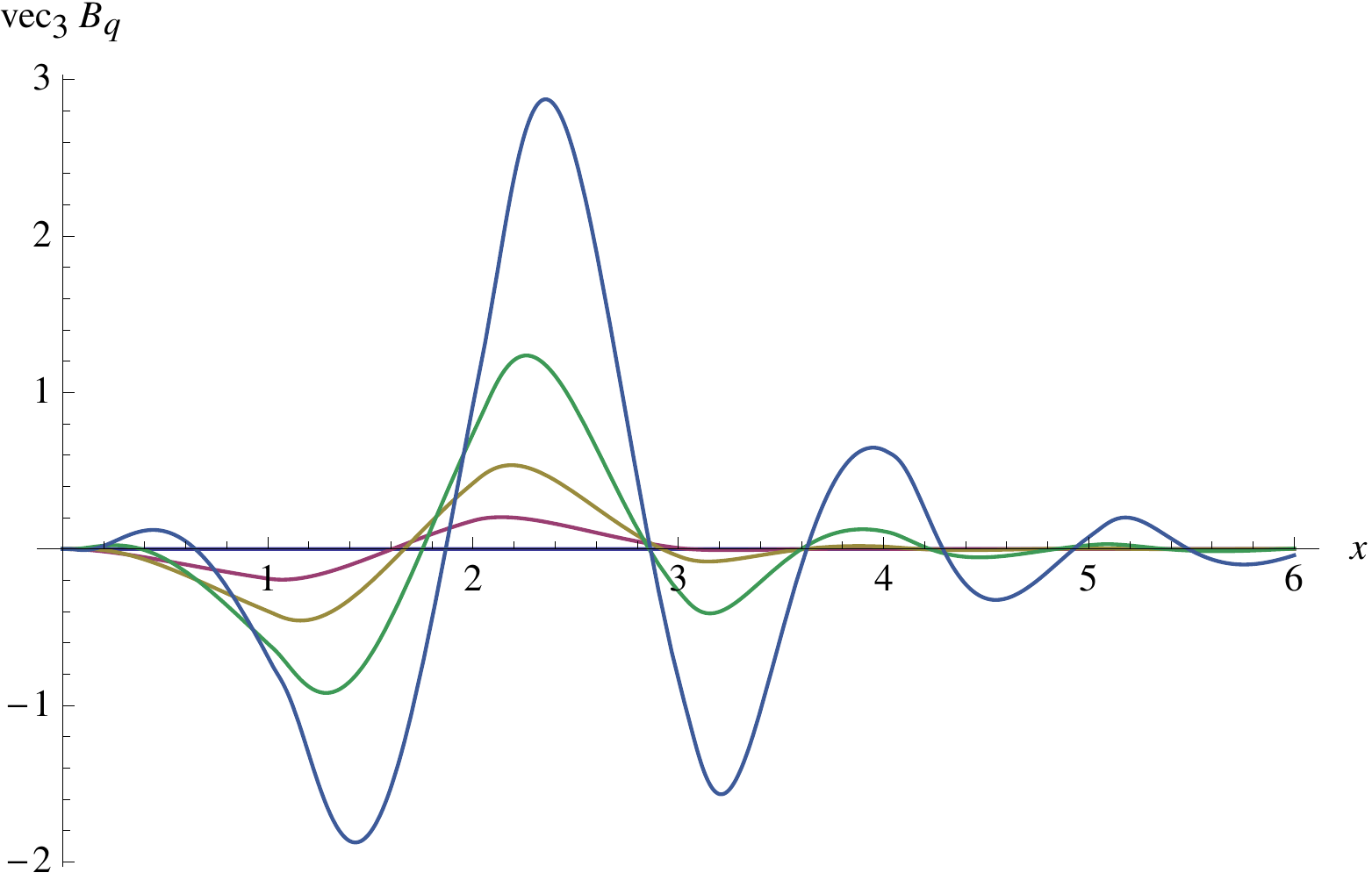}
\caption{The vector parts of the quaternionic B-splines $B_q$ with $q = 3 + \frac{m}{5} e_1 - \frac{3m}{10} e_2 + \frac{2m}{5} e_3$, $m = 0, 1, 2, 3, 4$. The amplitudes increase with increasing $m$.}\label{fig2}
\end{center}
\end{figure}

\begin{figure}[h!]
\begin{center}
\includegraphics[width=6cm, height= 5cm]{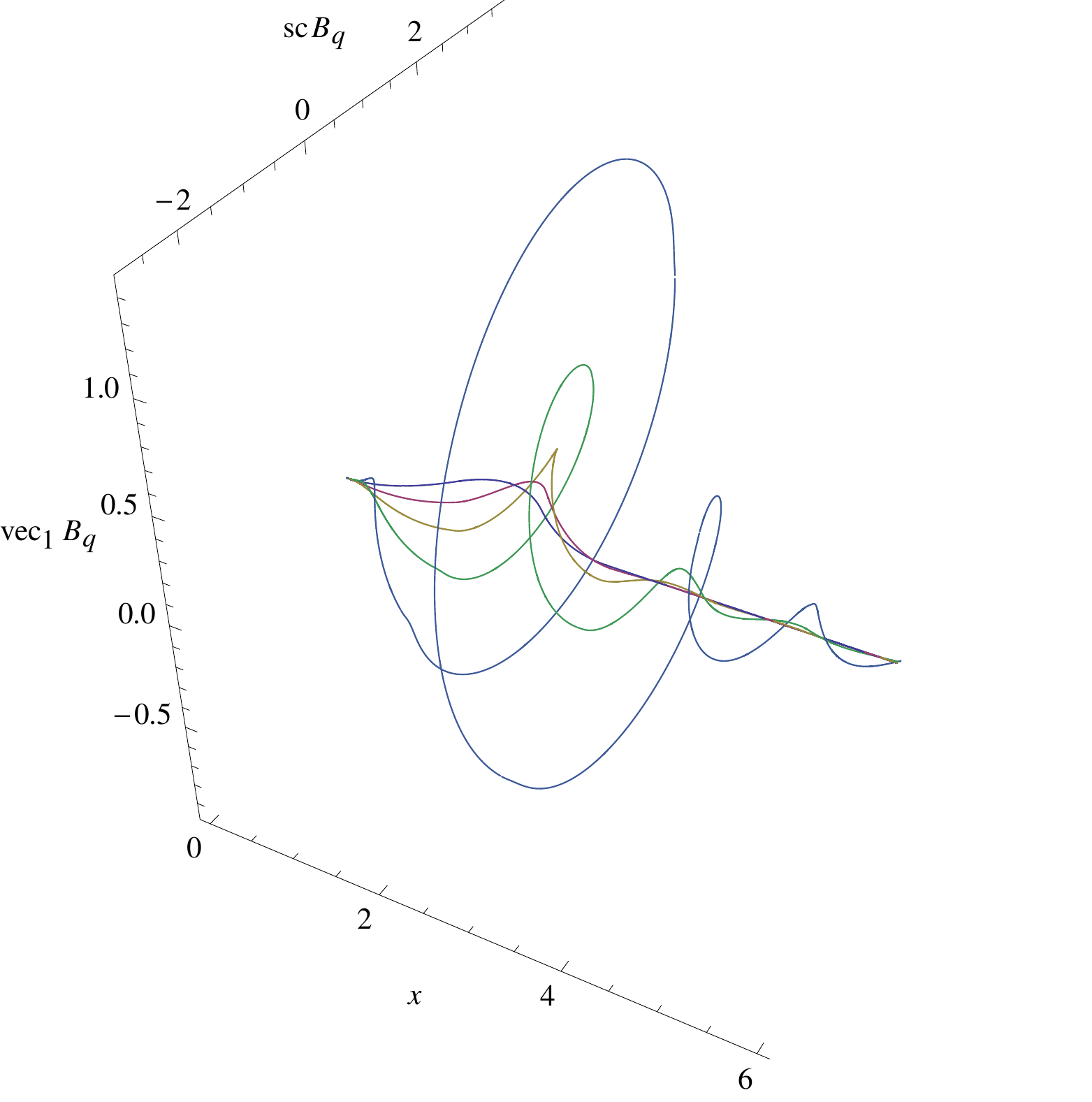}\includegraphics[width=6cm, height= 5cm]{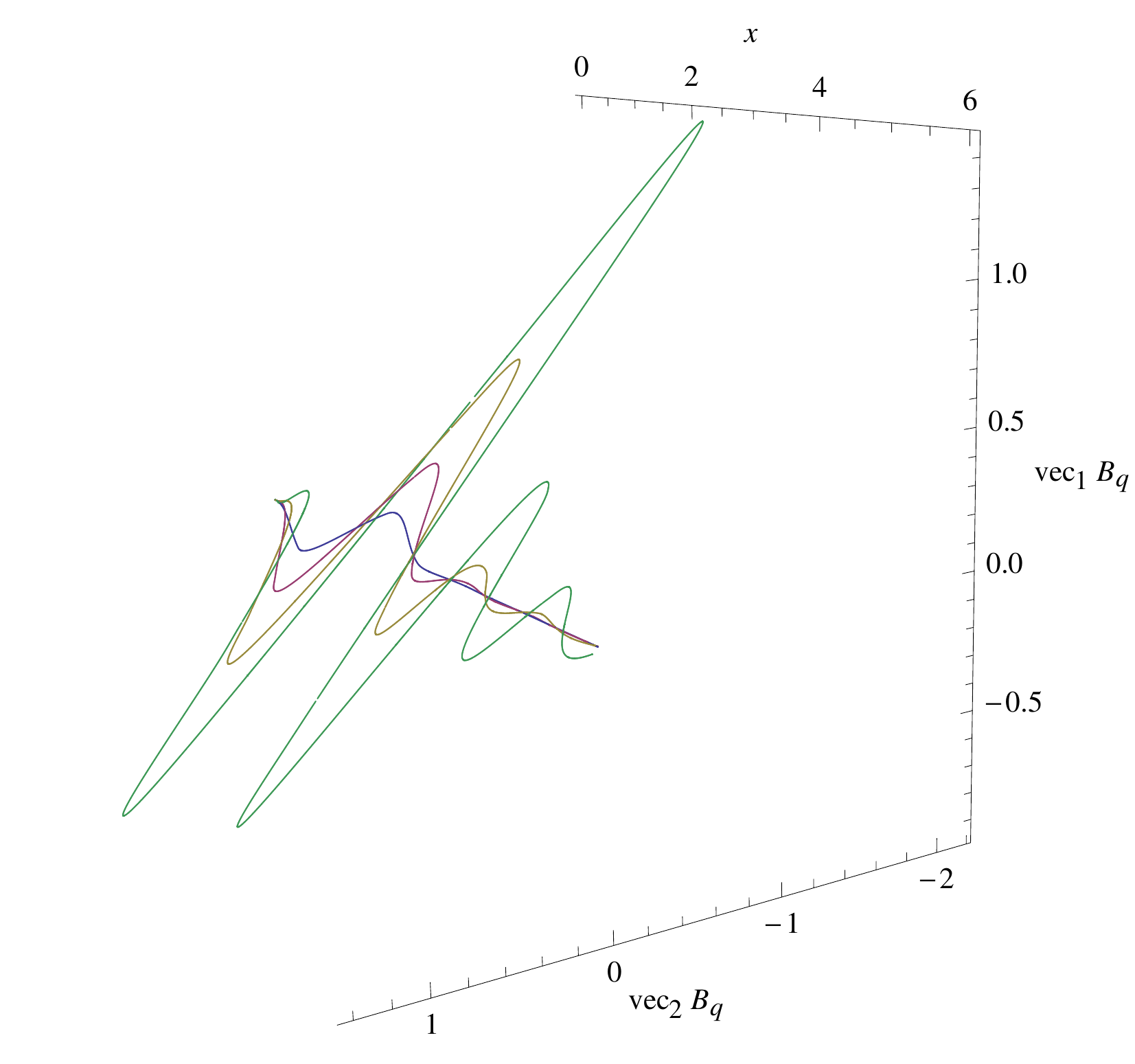}\caption{The the scalar part versus the vector part $v_1$ (left) and the vector part $v_1$ versus $v_2$ (right) of the quaternionic B-splines $B_q$ with $q = 3 + \frac{m}{5} e_1 - \frac{3m}{10} e_2 + \frac{2m}{5} e_3$, $m = 0, 1, 2, 3, 4$. The amplitudes increase with increasing $m$. Notice that in the right plot, the graphs lie in the same plane since the ratios $(\frac{m}{5})/(-\frac{3m}{5})$ are independent of $m$.}\label{fig3}
\end{center}
\end{figure}

\begin{figure}[h!]
\begin{center}
\includegraphics[width=7cm, height= 6cm]{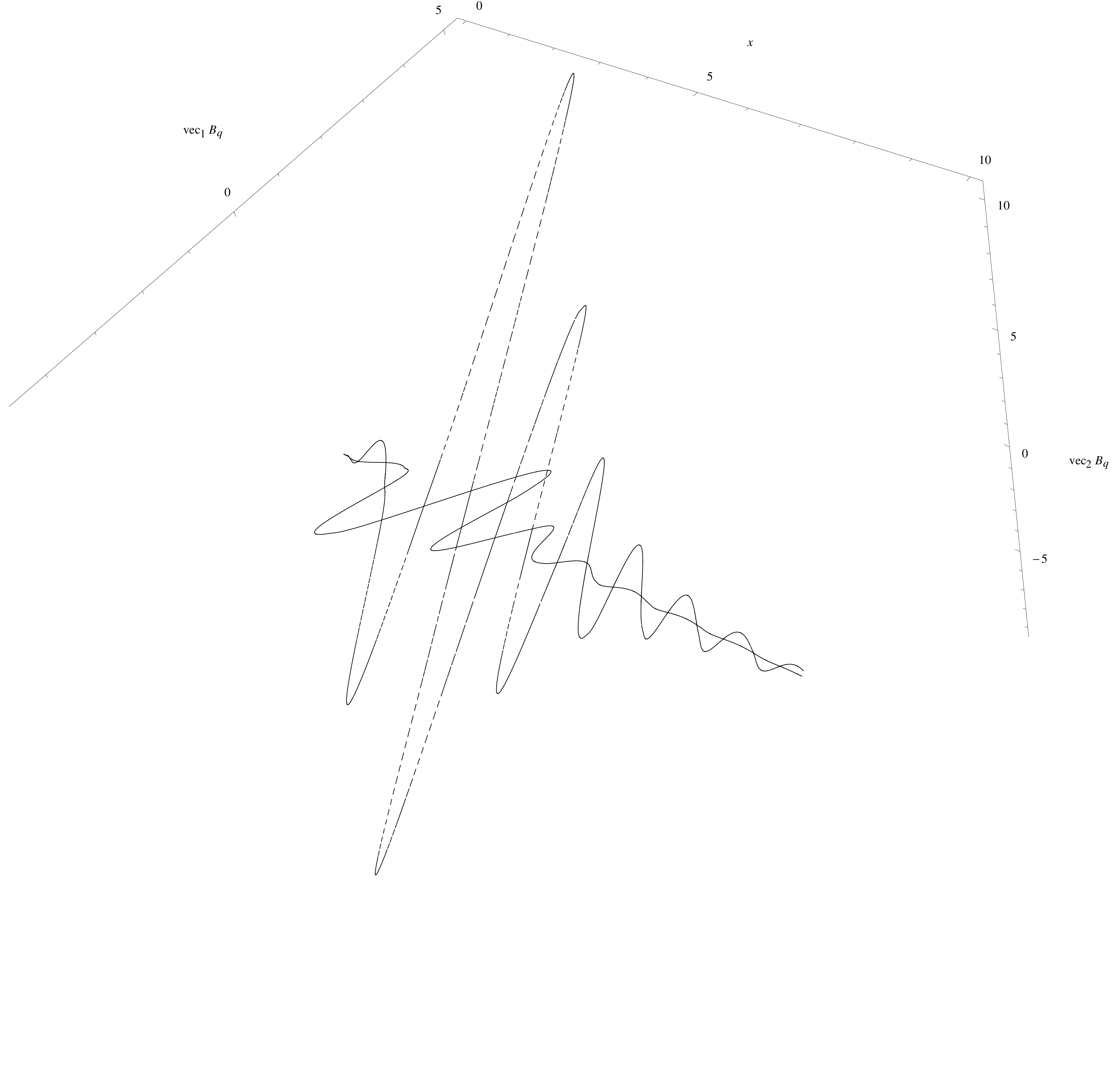}
\caption{The vector parts $v_1$ versus $v_2$ of the quaternionic B-splines $B_{q_1}$ with $q_1 = 3 - e_1 + e_2 + 2 e_3$ (solid) and  $B_{q_2}$ with $q_2 = 3 + e_1 + 2 e_2 + 2 e_3$ (dashed).}\label{fig4}
\end{center}
\end{figure}

The above proof suggests the definition of a backwards difference operator of quaternionic order $q$. Indeed, let $f\in C_c (\R)$ be a compact supported continuous function. Then we call for a fixed $q\in \bH_{\mathbb R}$,
\begin{gather*}
\nabla^q: C_c(\R)\to C_c (\R)\\
f\mapsto \nabla^q f := \sum_{k=0}^\infty \binom{q}{k} (-1)^k\,f(\mydot - k),
\end{gather*}
a \emph{backwards difference operator of quaternionic order $q$} or, for short, a \emph{quaternionic backwards difference operator}. The proof of Theorem \ref{thm1} shows then that
\[
(\nabla^q t_+^{q-1})^{\wedge} = \Gamma (q) \wh{B_q},
\]
or, equivalently,
\[
B_q = \frac{1}{\Gamma (q)}\,\nabla^q t_+^{q-1},
\]
in analogy with the corresponding property for the complex and classical polynomial B-splines.

For $j = 1, 2$, let $w_j\in {\mathbb C}$ with $\Re (w_j) > 1$.  In \cite{FBU} it is shown that 
\[
{B_{w_1}}*{B_{w_2}} ={B_{w_1+w_2}}
\]
where $B_{w_j}$ $(j=1,2)$ are complex B-splines. This is a consequence of the fact that $z^{w_1}z^{w_2}=z^{w_1+w_2}$ for complex $z$ in a neighborhood of the identity. Proposition \ref{prop1} shows that the corresponding result fails when $w_1,w_2$ are replaced by quaternions. In fact, if $q_j=a_j+v_j\in{\mathbb H}_{\mathbb R}$ with $a_j>0$ then
$$z^{q_1}z^{q_2}=z^{q_3}$$
for some $q_3\in{\mathbb H}_{\mathbb R}$ and all $z$ in a neighborhood of the identity in the complex plane if and only if $q_3=q_1+q_2$ and the vectors $v_1,v_2$ are linearly dependent in ${\mathbb R}^3$. We therefore have the following  result for quaternionic B-splines:

%provided $v_2 = \lambda v_1$, for some $\lambda \in \R$. Using the fact that for quaternion-valued functions $f$ and $g$, $(f\ast g)^\wedge = \wh{f}%%\, \wh{g}$, we obtain
%\[
%(B_{q_1}\ast B_{q_2})^\wedge = \wh{B_{q_1}}\, \wh{B_{q_2}},
%\]
%and, therefore,
%\begin{equation}
%B_{q_1}\ast B_{q_2} = B_{q_1+q_2},\label{addition formula}
%\end{equation}
%provided $v_2 = \lambda v_1$, for some $\lambda\in{\mathbb R}$. However, this is the only situation in which this relationship holds, as the following result shows.

\begin{proposition} Let $q_i=a_i+v_i$ $(i=1,2)$ be real quaternions. Then there is a real quaternion $q_3=a_3+v_3$ for which 
\begin{equation}
B_{q_1}*B_{q_2}=B_{q_3}\label{conv formula}
\end{equation}
if and only if $q_3=q_1+q_2$ and the vectors $v_1,v_2$ are linearly dependent in ${\mathbb R}^3$.
\end{proposition}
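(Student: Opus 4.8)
The plan is to pass to the Fourier domain and reduce the convolution identity to the purely multiplicative identity $\Xi(\xi)^{q_1}\Xi(\xi)^{q_2} = \Xi(\xi)^{q_3}$, which is exactly the situation already settled by Proposition \ref{prop1}. First I would record a convolution theorem adapted to the quaternionic setting. For $f,g\in L^1(\R,\bH_{\mathbb C})$ set $(f*g)(x)=\int_\R f(y)\,g(x-y)\,dy$. Since the kernel $e^{-i\xi y}$ occurring in $\sF$ is complex, hence central in $\bH_{\mathbb C}$, the usual Fubini computation survives the non-commutativity verbatim: substituting $u=x-y$ and commuting $e^{-i\xi y}$ past $g(u)$ gives $\wh{f*g}(\xi)=\wh{f}(\xi)\,\wh{g}(\xi)$, in this left-to-right order. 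Because each $B_{q_i}$ lies in $L^1\cap L^2$ when $\Sc(q_i)>1$ and $\sF$ is injective on $L^1(\R,\bH_{\mathbb C})$, the identity \eqref{conv formula} is equivalent to $\wh{B_{q_1}}(\xi)\,\wh{B_{q_2}}(\xi)=\wh{B_{q_3}}(\xi)$ for all $\xi$, that is, by the definition \eqref{qB}, to
\[
\Xi(\xi)^{q_1}\,\Xi(\xi)^{q_2}=\Xi(\xi)^{q_3},\qquad \xi\in\R.
\]

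Next I would connect this to Proposition \ref{prop1}, whose hypothesis is the same identity with the complex variable $z$ ranging over a full neighborhood of $z=1$. The only gap is that $\xi\mapsto\Xi(\xi)$ merely traces a curve in $\C$, not an open set. To bridge it, I would use that $\Xi$ is real-analytic with $\Xi(0)=1$ and $\Xi'(0)=-i/2\neq 0$, so the image $\{\Xi(\xi):|\xi|<\delta\}$ is a non-constant analytic arc accumulating at $z=1$. Since $z\mapsto z^{q_1}z^{q_2}$ and $z\mapsto z^{q_3}$ are holomorphic (componentwise in $\bH_{\mathbb C}$) near $z=1$ via the principal branch of $\log$, the identity theorem forces these two holomorphic maps to agree on an entire complex neighborhood of $1$. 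Thus the displayed identity on $\R$ upgrades to $z^{q_1}z^{q_2}=z^{q_3}$ for all $z$ near $1$, and Proposition \ref{prop1} delivers $q_3=q_1+q_2$ together with the linear dependence of $\{v_1,v_2\}$ in $\R^3$. The converse direction reverses these steps: assuming $q_3=q_1+q_2$ with $v_1,v_2$ dependent, Proposition \ref{prop1} gives $\Xi^{q_1}\Xi^{q_2}=\Xi^{q_3}$, whence $\wh{B_{q_1}}\wh{B_{q_2}}=\wh{B_{q_3}}$, and the convolution theorem together with injectivity of $\sF$ yields \eqref{conv formula}.

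I expect the main obstacle to be precisely this passage from the arc to the neighborhood: one must justify that equality of the transforms only along the one-real-parameter curve $\Xi(\R)$ still suffices to invoke Proposition \ref{prop1}, whose statement presumes a two-dimensional neighborhood of $1$. The cleanest remedy is the identity-theorem argument above; alternatively one can re-run the power-series extraction from the proof of Proposition \ref{prop1} directly in the parameter $\xi$, using $\frac{d}{d\xi}\log\Xi(\xi)\big|_{\xi=0}\neq 0$ to legitimately match the coefficient of $(\log\Xi)^2$ and thereby recover $v_1\wedge v_2=0$. A secondary point needing care is non-commutativity: the convolution theorem must preserve the order $\wh{B_{q_1}}\wh{B_{q_2}}$, which it does exactly because the Fourier kernel is complex and central, and this order is the same one appearing in Proposition \ref{prop1}.
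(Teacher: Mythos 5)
Your proposal is correct and follows essentially the same route as the paper, which derives the proposition from Proposition \ref{prop1} by passing to the Fourier domain (the paper in fact gives no separate proof beyond this reduction). Your two additional points of care --- that the convolution theorem survives non-commutativity because the kernel $e^{-i\xi y}$ is central in $\bH_{\mathbb C}$, and that the identity theorem is needed to upgrade equality along the analytic arc $\Xi(\R)$ (using $\Xi'(0)=-i/2\neq 0$) to equality on a full neighborhood of $z=1$ --- fill in details the paper leaves implicit.
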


We remark that  since ${\mathbb C}$ may be realized as a commutative subalgebra of ${\mathbb H}_{\mathbb R}$ as
$${\mathbb  C}\simeq V_k=\text{sp}_{\mathbb R}\{1,e_k\}$$
for each $k\in\{1,2,3\}$, in the case where $q_1=a_1+\lambda_1e_k$, $q_2=a_2+\lambda_2 e_k$, and $a_1,a_2> 0$, $\lambda_1,\lambda_2\in{\mathbb R}$ and $k\in\{1,2,3\}$ fixed, we recover the convolution property (\ref{conv formula}).

Given $z,w\in{\mathbb C}$ and $q_1=a_1+v_1$, $q_2=a_2+v_2\in{\mathbb H}_{\mathbb R}$, note that if $\{v_1,v_2\}$ is linearly dependent then $z^{q_1}w^{q_2}=w^{q_2}z^{q_1}$. The operators $\sD^{q_1}$ and $\nabla^{q_2}$ are multiplier operators in the sense that 
$$\widehat{\sD^{q_1}f}(\xi )=(-i\xi )^{q_1}\widehat f(\xi );\qquad \widehat{\nabla^{q_2}f}(\xi )=(1-e^{-i\xi})^{q_2}\widehat f(\xi )$$
for sufficiently smooth functions $f$. Consequently, if $\{v_1,v_2\}$ is linearly dependent, we have
\begin{equation}
\sD^{q_1}\nabla^{q_2}=\nabla^{q_2}\sD^{q_1}.\label{comm}
\end{equation}
The time domain representation (\ref{time domain}) of $B_{q_2}$ may be written as
\[
B_{q_2} = \nabla^{q_2} \sD^{-q_2} \delta.
\]
Then, provided that $a_2 - a_1 > 1$ and $\{v_1,v_2\}$ is linearly dependent, an application on (\ref{comm}) gives
\begin{align}
\sD^{q_1} B_{q_2}  &= \sD^{q_1} \nabla^{q_2} D^{-q_2} \delta =  \nabla^{q_2} \sD^{-(q_2-q_1)} \delta = \nabla^{q_1} \nabla^{q_2 - q_1} \sD^{-(q_2-q_1)}\delta\nonumber \\
& = \nabla^{q_1} B_{q_2 - q_1}.
\end{align}
For complex $q_j$, this identity reduces to the one known from the theory of complex B-splines.

\begin{proposition}
The quaternionic B-spline $B_q$, with $q\in{\mathbb H}_{\mathbb R}$ and $\Sc (q) > 1$, satisfies the recursion relation
\[
(q-1) B_q (t) = t B_{q-1} + (q-t) B_{q-1} (t - 1).
\]
\end{proposition}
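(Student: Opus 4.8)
The plan is to prove the identity directly from the time-domain representation \eqref{time domain} together with the functional equation \eqref{recur}, reducing the whole statement to an elementary identity among quaternionic binomial coefficients. First I would record two structural observations that render the non-commutativity harmless. Fix $q=a+v$ with $a=\Sc(q)>1$. Every object that appears --- the coefficients $\binom{q}{k}$ and $\binom{q-1}{k}$, the values $\Gamma(q)$, $\Gamma(q-1)$, and the truncated powers $(t-k)_+^{q-1}$ and $(t-k)_+^{q-2}$ --- lies in the two-dimensional real subalgebra $V=\Span\{1,v/|v|\}\cong\C$ (note $(v/|v|)^2=-1$), so all of them commute with one another and with the scalar $t$. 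Moreover, for each fixed $t$ the series in \eqref{time domain} are in fact finite, since $(t-k)_+=0$ once $k>t$; hence every rearrangement below is legitimate and no convergence issue arises. The identities are understood pointwise a.e.\ (equivalently, distributionally, as in Theorem \ref{thm1}), the powers $(t-k)_+^{q-2}$ being locally integrable for $a>1$.

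Second, I would rewrite the two sides. Using \eqref{recur} in the form $\Gamma(q)=(q-1)\,\Gamma(q-1)$ gives
\[
(q-1)\,B_q(t)=\frac{1}{\Gamma(q-1)}\sum_{k=0}^\infty(-1)^k\binom{q}{k}(t-k)_+^{q-1},
\]
while the right-hand side, after substituting \eqref{time domain} for $B_{q-1}$ and multiplying by $t$ and $(q-t)$ respectively, equals $\Gamma(q-1)^{-1}$ times
\[
t\sum_{k=0}^\infty(-1)^k\binom{q-1}{k}(t-k)_+^{q-2}+(q-t)\sum_{k=0}^\infty(-1)^k\binom{q-1}{k}(t-1-k)_+^{q-2}.
\]
In the second sum I would shift the index by $k\mapsto k-1$ so that both sums run over the common powers $(t-k)_+^{q-2}$, and on the left I would use the semigroup identity $(t-k)_+^{q-1}=(t-k)\,(t-k)_+^{q-2}$, valid by Proposition \ref{prop1} since the vector parts of $q-2$ and $1$ are linearly dependent (the latter is $0$). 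Cancelling $\Gamma(q-1)^{-1}$ and comparing the coefficient of $(-1)^k(t-k)_+^{q-2}$ on each side reduces the proposition to the single scalar-coefficient identity
\[
\binom{q}{k}(t-k)=t\,\binom{q-1}{k}-(q-t)\,\binom{q-1}{k-1},\qquad k\ge0,
\]
with the convention $\binom{q-1}{-1}=0$.

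Finally I would verify this identity. Expanding the right-hand side and collecting the two $t$-terms via Pascal's rule for quaternionic binomials, $\binom{q-1}{k}+\binom{q-1}{k-1}=\binom{q}{k}$, turns it into $t\binom{q}{k}-q\binom{q-1}{k-1}$; the telescoping identity $q\binom{q-1}{k-1}=k\binom{q}{k}$, immediate from the definition of the Pochhammer symbol, then collapses this to $(t-k)\binom{q}{k}$, which is the left-hand side. This closes the argument.

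The main obstacle is not any hard estimate but keeping the non-commutative bookkeeping honest: the recursion coefficients $q-1$ and $q-t$ are genuinely quaternions, and both Pascal's rule and $q\binom{q-1}{k-1}=k\binom{q}{k}$ must be justified in ${\mathbb H}_{\mathbb R}$ rather than quoted from the scalar theory. Each, however, is a polynomial identity in the single quaternion $q$ and therefore holds verbatim once one checks that it only ever multiplies elements of the commutative subalgebra $V$ --- precisely the first observation. Thus the genuinely quaternionic content is concentrated in that commutativity remark, after which the computation is the classical B-spline recursion carried out inside $V\cong\C$.
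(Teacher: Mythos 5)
Your proof is correct and follows essentially the same route as the paper's: the paper first establishes the operator identity $\nabla^q(tf(t)) = (t-q)\,\nabla^q f(t) + q\,\nabla^{q-1} f(t)$ using exactly the two binomial facts you invoke ($k\binom{q}{k}=q\binom{q-1}{k-1}$ and Pascal's rule) and then specializes to $f(t)=t_+^{q-2}/\Gamma(q)$, which is your coefficient comparison packaged in operator form. Your explicit attention to the commutative subalgebra $\Span\{1,v/|v|\}$, to the functional equation $\Gamma(q)=(q-1)\Gamma(q-1)$, and to the factorization $(t-k)_+^{q-1}=(t-k)(t-k)_+^{q-2}$ makes the non-commutative bookkeeping somewhat more explicit than in the paper, but the underlying computation is the same.
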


\begin{proof}
Let $f$ be a function defined on $[0, \infty)$. Consider 
\begin{align*}
\nabla^q (t f(t)) &= \sum_{k=0}^\infty (-1)^k \binom{q}{k} (t-k) f(t-k)\\
& = t \nabla^q f(t) - \sum_{k=1}^\infty (-1)^k \binom{q}{k}\, k\, f(t-k)\\
& =  t \nabla^q f(t) - \sum_{k=1}^\infty (-1)^k \binom{q-1}{k-1}\, q\, f(t-k)\\
& = t \nabla^q f(t) - q\left[\sum_{k=1}^\infty (-1)^k \binom{q}{k} f(t-k) - \sum_{k=1}^\infty (-1)^k \binom{q-1}{k} f(t-k)\right]\\
& = t \nabla^q f(t) - q \nabla^q f(t) + q \nabla^{q-1} f(t)\\
& = (t-q) \nabla^q f(t)  + q \nabla^{q-1} f(t).
\end{align*}
%Here, we used some identities for binomial coefficients which also hold true for quaternions.

Now let $f(t) := \dfrac{t_+^{q-2}}{\Gamma (q)}$. Then
\begin{align*}
\nabla^q \frac{t_+^{q-1}}{\Gamma (q)} &= (t-q) \nabla^q \frac{t_+^{q-2}}{\Gamma (q)}  + q \nabla^{q-1}\frac{t_+^{q-2}}{\Gamma (q)}\\
& = \frac{t-q}{q-1}\nabla \nabla^{q-1} \frac{t_+^{q-2}}{\Gamma (q-1)} + \frac{1}{q-1}\nabla^{q-1} \frac{t_+^{q-2}}{\Gamma (q-1)}\\
& = \frac{t-q}{q-1}\nabla B_{q-1} + \frac{1}{q-1}\nabla^{q-1} B_{q-1}.
\end{align*}
As $\nabla B_{q-1} (t) = B_{q-1} (t) - B_{q-1}(t-1)$, substitution into the above equation yields the result.
\end{proof}
\section{Miscellaneous properties of Quaternionic Gamma functions and B-splines}
In this section, we show that the quaternionic Gamma function is rotationally covariant and satisfies a certain homogeneity property. Both properties are inherited by the quaternionic B-splines.

To this end, let $\sigma \in\text{SO}(3)$ be a rotation. Then we define $1\otimes\sigma\in\text{SO}(4)$ as the rotation of ${\mathbb R}^4$ which fixes the $x_1$ axis  and rotates the three-dimensional orthogonal subspace via $\sigma$, i.e., in quaternionic notation, if $q=a+v\in{\mathbb H}_{\mathbb R}$ then  $(1\otimes\sigma)(a+v)=a+\sigma v$.

\begin{lemma}%[Rotational Covariance of the Quaternionic Gamma function] 
\label{rot cov} Let $\sigma\in\text{SO}(3)$ and $q\in{\mathbb H}_{\mathbb R}$. Then 
$$\Gamma ((1\otimes\sigma )(q))=(1\otimes\sigma)(\Gamma (q)).$$
\begin{proof} Since $\Gamma (q)$ has the integral representation (\ref{gamma})
%$$\Gamma (q)=\int_0^\infty t^{a-1}\cos (|v|\log t)\, dt+\frac{v}{|v|}\int_0^\infty t^{a-1}\sin (|v|\log t)\, dt,$$
we obtain
\begin{align*}
\Gamma ((1\otimes\sigma)(q))&=\Gamma (a+\sigma (v))\\
&=\int_0^\infty t^{a-1}\cos (|\sigma v|\log t)\, dt+\frac{\sigma v}{|\sigma v|}\int_0^\infty t^{a-1}\sin (|\sigma v|\log t)\, dt\\
&=\int_0^\infty t^{a-1}\cos (|v|\log t)\, dt+\frac{\sigma v}{|v|}\int_0^\infty t^{a-1}\sin (|v|\log t)\, dt\\
%&=\int_0^\infty t^{a-1}\cos (|v|\log t)\, dt+\sigma\bigg(\frac{v}{|v|}\int_0^\infty t^{a-1}\sin (|v|\log t)\, dt\bigg)\\
&=(1\otimes\sigma )(\Gamma (q)).\qedhere
\end{align*}
\end{proof}
\end{lemma}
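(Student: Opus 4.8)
The plan is to argue directly from the integral representation (\ref{gamma}) of the quaternionic Gamma function, exploiting the one essential feature of $\sigma$: as an element of $\text{SO}(3)$ it is an $\mathbb{R}$-linear orthogonal map, hence it fixes the scalar part of $q$ and preserves the Euclidean norm of its vector part, $|\sigma v| = |v|$.

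First I would isolate the scalar and vector contributions to $\Gamma(q)$. Setting
$$C(a,\rho) := \int_0^\infty t^{a-1}\cos(\rho\log t)\,e^{-t}\,dt, \qquad S(a,\rho) := \int_0^\infty t^{a-1}\sin(\rho\log t)\,e^{-t}\,dt,$$
which are real scalars depending only on $a$ and $\rho$, the definition (\ref{gamma}) becomes $\Gamma(q) = C(a,|v|) + \frac{v}{|v|}\,S(a,|v|)$. Thus $\Sc\Gamma(q) = C(a,|v|)$ and $\Ve\Gamma(q) = \frac{v}{|v|}\,S(a,|v|)$, a vector pointing in the direction of $v$.

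Next I would evaluate both sides of the asserted identity and compare. Since $1\otimes\sigma$ fixes scalars and acts by $\sigma$ on vectors, and since $S(a,|v|)$ is a scalar while $\sigma$ is linear,
$$(1\otimes\sigma)(\Gamma(q)) = C(a,|v|) + \sigma\!\left(\frac{v}{|v|}\right)S(a,|v|) = C(a,|v|) + \frac{\sigma v}{|v|}\,S(a,|v|).$$
On the other hand $(1\otimes\sigma)(q) = a + \sigma v$ has scalar part $a$ and vector part $\sigma v$, so feeding it into (\ref{gamma}) and using $|\sigma v| = |v|$ gives $\Gamma(a+\sigma v) = C(a,|v|) + \frac{\sigma v}{|v|}\,S(a,|v|)$, which is the same expression. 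This is exactly the content of the lemma.

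There is no genuine obstacle here; the whole argument reduces to the norm-invariance $|\sigma v| = |v|$ together with the $\mathbb{R}$-linearity of $\sigma$. The only point meriting a remark is the degenerate case $v = 0$, where $\frac{v}{|v|}$ is undefined: there $q = a$ and $\sigma v = 0$, while $S(a,0) = 0$ annihilates the vector term, so both sides collapse to the ordinary real value $\Gamma(a)$ and the identity holds trivially.
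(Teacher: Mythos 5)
Your argument is correct and is essentially the paper's own proof: both work directly from the integral representation \eqref{gamma} and reduce the claim to the norm-invariance $|\sigma v|=|v|$ together with the linearity of $\sigma$. Your explicit treatment of the degenerate case $v=0$ is a small addition the paper omits, but otherwise the two proofs coincide.
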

As $\Gamma (q)\in{\mathbb H}_{\mathbb R}$, we write 
\[
\Gamma (q)=(\Gamma (q))_0+\sum_{i=0}^3e_i(\Gamma (q))_i. 
\]
However, since $\Gamma (q)_i=\dfrac{v_i}{|v|}\int\limits_0^\infty t^{a-1}\sin (|v|\log t)\, dt$ for $i\in\{1,2,3\}$, we have the following homogeneity relation.
\begin{lemma} 
For all $q=a+v\in{\mathbb H}$, $i\in\{1,2,3\}$,
$$v_j(\Gamma (q))_i=v_i(\Gamma (q))_j.$$
\end{lemma}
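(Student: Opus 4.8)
The plan is to read the claim off directly from the explicit formula for the vector components of $\Gamma(q)$ recorded immediately before the statement. Writing $I := \int_0^\infty t^{a-1}\sin(|v|\log t)\,e^{-t}\,dt$ for the common scalar integral, the integral representation \eqref{gamma} shows that the vector part of $\Gamma(q)$ is exactly $\frac{v}{|v|}\,I$. Expanding $v=\sum_{k=1}^3 v_k e_k$ then yields $(\Gamma(q))_k=\frac{v_k}{|v|}\,I$ for each $k\in\{1,2,3\}$, which is the only structural fact needed.

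With this in hand, the identity is immediate: for any $i,j\in\{1,2,3\}$,
\[
v_j\,(\Gamma(q))_i = v_j\cdot\frac{v_i}{|v|}\,I = \frac{v_i v_j}{|v|}\,I = v_i\cdot\frac{v_j}{|v|}\,I = v_i\,(\Gamma(q))_j,
\]
so both sides coincide because each shares the symmetric product $v_i v_j$ together with the common scalar factor $I/|v|$. No further computation is required; the whole content is the observation that the vector part of $\Gamma(q)$ is a scalar multiple of $v$, which forces the component ratio $(\Gamma(q))_k/v_k$ to be independent of $k$.

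The only point deserving attention is the degenerate case $v=0$, i.e.\ a purely real quaternion $q=a$, where the factor $|v|$ in the denominator is undefined. Here, however, $\Gamma(q)=\Gamma(a)\in\mathbb{R}$ has vanishing vector part, so $(\Gamma(q))_i=0=v_i$ for every $i$ and the asserted equality reduces to the trivial $0=0$. Thus there is no genuine obstacle: once the representation \eqref{gamma} is invoked, the lemma is a one-line consequence, with the $v=0$ case handled separately by noting that both sides vanish.
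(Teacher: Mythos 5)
Your proof is correct and follows essentially the same route as the paper, which derives the lemma directly from the observation that $(\Gamma(q))_i=\frac{v_i}{|v|}\int_0^\infty t^{a-1}\sin(|v|\log t)\,e^{-t}\,dt$, so that the vector part of $\Gamma(q)$ is a scalar multiple of $v$ and the identity reduces to the symmetry of $v_iv_j$. Your additional remark on the degenerate case $v=0$ is a sensible supplement that the paper leaves implicit.
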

The rotation-covariance of the Gamma function is inherited by the quaternionic B-splines. In fact, we have
\begin{lemma}
Let $\sigma\in\text{SO}(3)$. Then for all $q\in{\mathbb H}_{\mathbb R}$,
$$B_{(1\otimes\sigma)(q)}(t)=(1\otimes\sigma )(B_q(t)).$$
\end{lemma}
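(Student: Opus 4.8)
The plan is to prove the rotation-covariance of $B_q$ by combining the time-domain representation \eqref{time domain} with the rotation-covariance of the Gamma function already established in Lemma \ref{rot cov}. The key observation is that the real quaternionic structure appearing in \eqref{time domain} enters through exactly two mechanisms: the factor $1/\Gamma(q)$, and the quaternionic binomial coefficients $\binom{q}{j} = (q)_j/j!$, which are built out of the Pochhammer symbols $(q)_j$. If I can show that each of these objects transforms covariantly under $1\otimes\sigma$, then applying $1\otimes\sigma$ term-by-term to the series \eqref{time domain} will yield the result, since $(t-k)_+^{q-1}$ is the only remaining factor and its exponent $q-1$ must also be tracked under the rotation.

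First I would record the elementary fact that $1\otimes\sigma$, acting by $a+v \mapsto a+\sigma v$, is an $\R$-algebra homomorphism on $\bH_{\mathbb R}$: since $\sigma\in\mathrm{SO}(3)$ preserves both the inner product $\langle v,w\rangle$ and the cross product $v\wedge w$, the multiplication rule \eqref{vec mult} shows $(1\otimes\sigma)(pq) = ((1\otimes\sigma)p)((1\otimes\sigma)q)$, and clearly it fixes scalars and $|v|$. Consequently $1\otimes\sigma$ commutes with all polynomial (indeed power-series) operations in $q$, so $(1\otimes\sigma)(q)_j = ((1\otimes\sigma)q)_j$ and hence $(1\otimes\sigma)\binom{q}{j} = \binom{(1\otimes\sigma)q}{j}$. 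For the exponent, I would check directly from the definition \eqref{eq2.1} that the quaternionic power respects the rotation, i.e. $(t-k)_+^{((1\otimes\sigma)q)-1} = (1\otimes\sigma)\big((t-k)_+^{q-1}\big)$; this follows because \eqref{eq2.1} is expressed purely in terms of $|v|$ and the normalized direction $v/|v|$, both of which transform correctly under $\sigma$ (the real prefactor $z^a$ is untouched).

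Next I would handle the normalizing factor. By Lemma \ref{rot cov}, $\Gamma((1\otimes\sigma)q) = (1\otimes\sigma)(\Gamma(q))$, and since $1\otimes\sigma$ is an algebra isomorphism it commutes with inversion, giving $\Gamma((1\otimes\sigma)q)^{-1} = (1\otimes\sigma)(\Gamma(q)^{-1})$. Assembling the pieces, I apply $1\otimes\sigma$ to the right-hand side of \eqref{time domain} for $B_q$; using that $1\otimes\sigma$ is linear and multiplicative, it passes through the sum and distributes across the product $\Gamma(q)^{-1}\binom{q}{k}(t-k)_+^{q-1}$, converting each factor into the corresponding factor for the order $(1\otimes\sigma)q$. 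The result is precisely the series \eqref{time domain} for $B_{(1\otimes\sigma)q}(t)$, which is the claim.

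The main obstacle I anticipate is not any single estimate but the bookkeeping of non-commutativity: in the product $\Gamma(q)^{-1}\binom{q}{k}(t-k)_+^{q-1}$ the three factors are quaternions that need not commute, so I must verify that $1\otimes\sigma$ being an \emph{algebra} homomorphism (and not merely $\R$-linear) is genuinely used to split the product, and that the factors are multiplied in a consistent order throughout. A secondary point requiring care is justifying that applying $1\otimes\sigma$ commutes with the infinite sum and with the distributional/$L^2$ interpretation asserted in Theorem \ref{thm1}; here I would note that $1\otimes\sigma$ is a fixed bounded $\R$-linear map on $\bH_{\mathbb R}$, hence continuous, so it interchanges freely with the convergent series and with the limit defining the distributional identity.
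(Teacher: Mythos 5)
Your proof is correct, but it takes a different (and heavier) route than the paper intends. The paper states this lemma without proof; judging from the proof of Lemma \ref{rot cov} and of the homogeneity lemma immediately following, the intended argument is a one-line computation in the Fourier domain: by \eqref{Bhat},
\begin{equation*}
\wh{B_{(1\otimes\sigma)(q)}}(\xi)=\Xi(\xi)^{a}\Big(\cos(|\sigma v|\log\Xi(\xi))+\tfrac{\sigma v}{|\sigma v|}\sin(|\sigma v|\log\Xi(\xi))\Big)=(1\otimes\sigma)\big(\wh{B_q}(\xi)\big),
\end{equation*}
since $|\sigma v|=|v|$ and $\sigma v/|\sigma v|=\sigma(v/|v|)$, and then one applies the inverse Fourier transform, which commutes with the ($\C$-linearly extended) map $1\otimes\sigma$ because that map is linear and bounded and acts only on the quaternionic value, not on the variable $\xi$. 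Your time-domain argument instead rests on the observation that $1\otimes\sigma$ is an $\R$-algebra automorphism of $\bH_{\R}$ (which is right: $\sigma\in\mathrm{SO}(3)$ preserves both $\langle v,w\rangle$ and $v\wedge w$, so \eqref{vec mult} gives multiplicativity), combined with Lemma \ref{rot cov}, covariance of $(q)_j$ and of $(t-k)_+^{q-1}$, and continuity of $1\otimes\sigma$ to pass through the series. All of these steps check out — note also that every factor in \eqref{time domain} lies in the commutative subalgebra $\Span_{\R}\{1,v/|v|\}$, so the non-commutativity bookkeeping you worry about is vacuous here. What your approach buys is the reusable structural fact that $1\otimes\sigma$ is an automorphism of $\bH_{\R}$, which immediately gives covariance of any object built polynomially or as a convergent series from $q$; what it costs is reliance on the full strength of Theorem \ref{thm1} (the time-domain representation, valid only for $\Sc(q)>1$) where the Fourier-domain definition \eqref{qB}--\eqref{Bhat} suffices. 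Either way, the lemma's hypothesis ``for all $q\in\bH_{\R}$'' should be read with the standing assumption $\Sc(q)>1$ (or $>\tfrac12$), a restriction common to both routes.
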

Furthermore, the homogeneity relation for the quaternionic Gamma function also transfers to quaternionic B-splines.
\begin{lemma}
For all $q=a+v\in{\mathbb H}_{\mathbb R}$, $i\in\{1,2,3\}$,
$$v_j(B_q(t))_i=v_i(B_q(t))_j.$$
\end{lemma}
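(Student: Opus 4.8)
The plan is to reduce the claimed identity to the single structural statement that, for each fixed $t$, the vector part $\Ve (B_q(t))$ is parallel to $v$; that is, $\Ve (B_q(t)) = \lambda(t)\,v$ for some real scalar $\lambda(t)$. Once this is in hand the conclusion is immediate: writing $(B_q(t))_i = \lambda(t)\,v_i$ for each $i\in\{1,2,3\}$ gives $v_j(B_q(t))_i = \lambda(t)\,v_i v_j = v_i(B_q(t))_j$. This is exactly the mechanism behind the homogeneity relation for $\Gamma$, where the vector part of $\Gamma(q)$ was seen to be a real multiple of $v/|v|$; I want to show the same feature survives for $B_q$.

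To establish that $\Ve (B_q(t))$ is parallel to $v$, I would invoke the rotation-covariance lemma for $B$-splines proved just above. Assume first $v\neq 0$, and choose $\sigma\in\text{SO}(3)$ to be a rotation about the axis ${\mathbb R}v$ through an angle that is not a multiple of $2\pi$, so that $\sigma v = v$. Then $(1\otimes\sigma)(q) = a + \sigma v = a + v = q$, and the covariance lemma yields $B_q(t) = B_{(1\otimes\sigma)(q)}(t) = (1\otimes\sigma)(B_q(t))$. Since $1\otimes\sigma$ fixes the scalar part and acts as $\sigma$ on the vector part, comparing vector parts gives $\sigma\,\Ve (B_q(t)) = \Ve (B_q(t))$. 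The fixed-point set of a nontrivial rotation about the axis ${\mathbb R}v$ is precisely the line ${\mathbb R}v$ (the remaining eigenvalues of $\sigma$ being $e^{\pm i\theta}\neq 1$), so $\Ve (B_q(t))\in{\mathbb R}v$, as required. The degenerate case $v=0$ is trivial, since then both sides of the asserted identity vanish.

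As an alternative that bypasses the covariance lemma, I could argue directly from the time-domain representation of Theorem \ref{thm1}. For $v\neq 0$ the real subspace $V_v:=\text{span}_{\mathbb R}\{1,v\}$ is a commutative subalgebra of ${\mathbb H}_{\mathbb R}$ isomorphic to ${\mathbb C}$. Each factor $(t-k)_+^{q-1}$ lies in $V_v$ by the definition \eqref{eq2.1}; each coefficient $\binom{q}{k}$ lies in $V_v$ because the Pochhammer symbol is a product of the factors $q-\ell = (a-\ell)+v\in V_v$ and $V_v$ is closed under multiplication; and $\Gamma(q)^{-1}\in V_v$ because $\Gamma(q)\in V_v$ and $V_v$ is a field. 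Hence the whole series $B_q(t)=\Gamma(q)^{-1}\sum_k(-1)^k\binom{q}{k}(t-k)_+^{q-1}$ lies in $V_v$, which again says exactly that $\Ve (B_q(t))\in{\mathbb R}v$.

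I do not anticipate a substantive obstacle here. The only care required is to confirm that $B_q(t)$ is genuinely real-quaternion-valued for $q\in{\mathbb H}_{\mathbb R}$ and $t\in{\mathbb R}$ — so that $\Ve (B_q(t))$ is an honest vector in ${\mathbb R}^3$ and $1\otimes\sigma$ acts as stated — which is clear from the time-domain representation since every factor lies in ${\mathbb H}_{\mathbb R}$, and then to dispose of the degenerate case $v=0$ separately. The heart of the matter is simply recognizing that the fact ``vector part parallel to $v$,'' which drove the Gamma-function homogeneity, persists through the operations defining $B_q$, whether one reads this off the rotational symmetry or off the commutative-subalgebra structure.
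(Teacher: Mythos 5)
Your argument is correct, but it reaches the key fact by a different route than the paper. Both you and the paper ultimately rest on the observation that $\Ve\bigl(B_q(t)\bigr)$ is a real scalar multiple of $v$, from which the symmetric identity $v_j(B_q(t))_i=v_i(B_q(t))_j$ is immediate. The paper gets this by reading it directly off the Fourier representation \eqref{Bhat}: inverting the transform componentwise gives the explicit formula
\[
(B_q(t))_i=\frac{v_i}{|v|}\int_{-\infty}^\infty\Xi (\xi )^a\sin \bigl(|v|\log\Xi (\xi )\bigr)e^{i\xi t}\, d\xi ,
\]
in which the integral is independent of $i$, and the lemma follows in one line. You instead deduce the parallelism either from the rotation-covariance lemma (a nontrivial rotation fixing $v$ fixes $q$, hence fixes $\Ve(B_q(t))$, whose fixed-point set is ${\mathbb R}v$) or from the observation that every ingredient of the time-domain series lies in the commutative subalgebra $V_v=\mathrm{span}_{\mathbb R}\{1,v\}\simeq\C$, which is closed under products and inversion. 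Both of your variants are sound; the subalgebra argument in particular is the cleaner and more conceptual one, since it explains \emph{why} the parallelism must persist under all the operations defining $B_q$ and would apply verbatim to $\Gamma(q)$, the Pochhammer symbols, and any other quantity built from $q$ by these operations. What the paper's approach buys instead is the explicit scalar coefficient multiplying $v_i/|v|$, which is occasionally useful elsewhere. Your attention to the two small points of care --- that $B_q(t)$ is genuinely ${\mathbb H}_{\mathbb R}$-valued so the $\mathrm{SO}(3)$ action and the notion of vector part make sense, and that the case $v=0$ is trivially disposed of --- is appropriate and closes the only gaps one might worry about.
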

\begin{proof}
Note that the Fourier transform of $B_q$ $(q=a+v)$ has the representation (\ref{Bhat}). 
%$$\widehat{B_q}(\xi )=\Xi (\xi )^q=\Xi (\xi )^a\bigg[\cos (|v|\log\Xi (\xi ))+\frac{v}{|v|}\sin (|v|\log\Xi (\xi ))\bigg]$$
%where $\Xi (\xi )=\dfrac{1-e^{-i\xi}}{i\xi}$. 
Consequently, for each $i\in\{1,2,3\}$,
$$(B_q(t))_i=\frac{v_i}{|v|}\int_{-\infty}^\infty\Xi (\xi )^a\sin (|v|\log\Xi (\xi ))e^{i\xi t}\, d\xi$$
from which the result follows.
\end{proof}
\section{Refinability and Multiscale Structure}
In this section we show refinability and multiscale structure of quaternionic B-splines. To this end, define for a fixed $q\in \bH_{\mathbb R}$ with $\Sc q > 1$,
\begin{align*}
H_0 (\xi) := \frac{\wh{B_q}(2\xi)}{\wh{B_q}(\xi)}, \quad \xi\in \R.
\end{align*}
Using Definition \eqref{eq2.1} and the quaternionic binomial theorem \ref{thm0}, one shows that
\begin{align*}
H_0 (\xi) = \frac{(1-e^{-2 i \xi})^q}{(1-e^{-i \xi})^q}\,\frac{(i \xi)^q}{(2 i \xi)^q} = \frac{1}{2^q}\,(1+e^{-i \xi})^q = \frac{1}{2^q}\,\sum_{k=0}^\infty \binom{q}{k}\,e^{-i k \xi}.
\end{align*}
Hence, $H_0$ is a quaternion-valued $2\pi$-periodic bounded function. In other words, the quaternionic B-spline $B_q$ satisfies the refinement equation
\be\label{eq5.1}
B_q (t) = \sum_{k=0}^\infty h(k) {B_q}(2t-k), \quad \text{ a.e. } t\in \R,
\ee
where the $\{h(k)\}_{k\in \Z}$ are the Fourier coefficients of the function $H_0$ and the convergence is in the sense of $L^2$. 

Recalling the integrand in \eqref{eq2.3}, we have the following inequalities for $|\wh{B_q}|^2$:
\be\label{eq5.2}
|\wh{B_a}(\xi)|^2 \leq |\wh{B_q}(\xi)|^2 \leq |\wh{B_a}(\xi)|^2\,\cosh(\pi |v|), \quad \xi\in \R.
\ee

\begin{theorem}
Suppose that $q\in \bH_{\mathbb R}$ with $\Sc q > 1$  fixed. Denote by $D$ the unitary dilation operator $(D f)(x) := \sqrt{2} f(2x)$.
Define the shift-invariant spaces
\be
V_n^q := \clos_{L^2} \Span \left\{D^nT_k B_q : k\in \Z\right\}, \quad n\in \Z.
\ee
Then $\{V_n^q\}_{n\in \Z}$ generates a dyadic multiresolution analysis of $L^2(\R,\bH_{\mathbb C})$.
\end{theorem}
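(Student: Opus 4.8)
The plan is to verify the five defining axioms of a dyadic multiresolution analysis for the family $\{V_n^q\}$: (i) the nesting $V_n^q\subset V_{n+1}^q$; (ii) the dilation relation $f\in V_n^q\iff Df\in V_{n+1}^q$; (iii) density, $\clos_{L^2}\bigcup_{n\in\Z}V_n^q = L^2(\R,\bH_{\mathbb C})$; (iv) separation, $\bigcap_{n\in\Z} V_n^q = \{0\}$; and (v) that the integer shifts $\{T_kB_q\}_{k\in\Z}$ form a Riesz basis of $V_0^q$. Axiom (ii) is immediate, since $V_{n+1}^q = DV_n^q$ because $D$ is unitary and commutes with closed span. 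Axiom (i) is precisely the content of the refinement equation \eqref{eq5.1} proved above: from $B_q = \sum_k h(k)\,B_q(2\,\cdot-k)$ one reads off $B_q\in V_1^q$, and since $V_1^q$ is invariant under integer shifts (an integer shift merely reindexes the generators $B_q(2\,\cdot-k)$), every $T_jB_q\in V_1^q$, whence $V_0^q\subset V_1^q$; applying $D^n$ gives the general inclusion. Throughout I would regard $V_n^q$ as the closed span over the coefficient algebra $\mathbb C\oplus\mathbb C\,\tfrac{v}{|v|}\subset\bH_{\mathbb C}$ (a quaternionic module span): all coefficients that occur — the mask values $h(k)=2^{1-q}\binom{q}{k}$, the values of $\wh{B_q}$, and the projection multipliers — lie in this commutative subalgebra, so the shift-invariant-space calculus proceeds as in the complex scalar case.

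The central analytic step is axiom (v), for which I would use the standard Fourier criterion: $\{T_kB_q\}$ is a Riesz basis for its closed span exactly when the periodization
\[
G_q(\xi) := \sum_{k\in\Z}\bigl|\wh{B_q}(\xi+2\pi k)\bigr|^2
\]
is bounded above and below by positive constants for a.e.\ $\xi$. Here \eqref{eq5.2} does the work: applying the pointwise sandwich $|\wh{B_a}(\xi)|^2\le|\wh{B_q}(\xi)|^2\le\cosh(\pi|v|)\,|\wh{B_a}(\xi)|^2$ at each point $\xi+2\pi k$ and summing over the lattice $2\pi\Z$ yields
\[
G_a(\xi)\le G_q(\xi)\le\cosh(\pi|v|)\,G_a(\xi),
\]
where $G_a$ is the periodization of the real fractional B-spline $B_a$ of order $a=\Sc q>1$. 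Since $B_a$ is known to have integer shifts forming a Riesz basis \cite{UB}, $G_a$ is bounded above and below, and the sandwich transfers these bounds to $G_q$; this simultaneously yields the Riesz constants and shows $\wh{B_q}\neq 0$ a.e.

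For density (iii) I would invoke the normalization $\wh{B_q}(0)=\Xi(0)^q=1\neq0$ together with continuity of $\wh{B_q}$ at the origin: concretely, one checks $\|P_{V_n^q}f - f\|\to0$ as $n\to+\infty$ for $f$ in the dense class of functions with compactly supported bounded Fourier transform, using the Fourier-multiplier formula for the orthogonal projection onto $V_0^q$, the Riesz bounds from (v), and $\wh{B_q}(0)=1$. For separation (iv) I would show $\|P_{V_n^q}f\|\to0$ as $n\to-\infty$ for every $f\in L^2$: writing $P_{V_{-n}^q} = D^{-n}P_{V_0^q}D^{n}$ and using the same projection formula, the Riesz lower bound, and the decay of $\wh{B_q}$ at infinity, the projection concentrates away from the support of $\wh f$ and vanishes in the limit, so that any $f$ lying in every $V_n^q$ is forced to be $0$.

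The main obstacle I anticipate is not any single estimate — \eqref{eq5.2} makes the Riesz bounds essentially free — but rather the careful bookkeeping required to port the scalar projection formula and the density and separation arguments to the $\bH_{\mathbb C}$-valued, non-commutative setting. One must verify that the orthogonal projection onto the shift-invariant space $V_0^q$ admits the usual Fourier-multiplier description despite the quaternionic values; this is exactly what the containment of all relevant coefficients in the commutative subalgebra $\mathbb C\oplus\mathbb C\,\tfrac{v}{|v|}$ guarantees. Once that reduction is in place, axioms (iii) and (iv) follow the classical template for refinable Riesz generators.
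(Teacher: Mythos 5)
Your proposal is correct and follows essentially the same route as the paper: the paper verifies exactly your three key facts --- continuity of $\wh{B_q}$ at the origin with $\wh{B_q}(0)=1$, the refinement equation \eqref{eq5.1}, and the Riesz sequence property obtained by summing the sandwich inequality \eqref{eq5.2} over the lattice $2\pi\Z$ and appealing to the fractional B-spline Riesz bounds of \cite{UB} --- and then concludes by citing Theorem 2.13 of \cite{W}, which packages the density and separation arguments you sketch by hand. Your additional care about the commutative subalgebra $\C\oplus\C\,\tfrac{v}{|v|}$ is a reasonable supplement but not part of the paper's argument.
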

\begin{proof}
We employ Theorem 2.13 from \cite{W}. Observe that 
\begin{enumerate}
\item $\wh{B_q}$ is continuous at the origin and $\wh{B_q}(0) = 1$. 
\item $B_q$ satisfies a refinement equation, namely \eqref{eq5.1}. 
\item $\{T_k B_q\}_{k\in \Z}$ is a Riesz sequence in $L^2({\mathbb R},{\mathbb H}_{\mathbb C})$. Indeed, by \eqref{eq5.2} and the fact that the fractional B-splines form a Riesz sequence for $L^2(\R,\C)$ \cite{UB}, we obtain
\begin{align*}
0 < A &\leq \sum_{k\in \Z} |\wh{B_a}(\xi+2\pi k)|^2 \leq \sum_{k\in \Z}|\wh{B_q}(\xi+2\pi k)|^2\\
& \leq \cosh(\pi |v|)\,\sum_{k\in \Z} |\wh{B_a}(\xi+2\pi k)|^2 \leq B \,\cosh(\pi |v|) < \infty,
\end{align*}
for some positive constants $A\leq B$.
\end{enumerate}
The statement now follows.
\end{proof}

Next, we consider the approximation order of the shift-invariant spaces $V_n^q$. To this end, denote by $\sP_n: L^2(\R,\bH_{\mathbb C})\to V_n^q$ the linear operator
\be\label{eq16}
\sP_n (f) := \sum_{k\in \Z} \inn{f}{D^n T_k B_q}\,D^n T_k B_q,
\ee
where $\inn{\mydot}{\mydot}$ denotes the $L^2({\mathbb R},{\mathbb H}_{\mathbb C})$ inner product.

The operator $\sP_n$ is said to provide \emph{approximation order $\alpha$} if 
\[
\Vert f - \sP_n f \Vert_{L^2} \in \mathcal{O}(2^{-n\alpha}), 
\]
for all $f\in H^\alpha (\R,\bH_{\mathbb C})$.

We require the following known result adapted to our setting. For a proof and details, see for instance \cite{DRoSh3}.
\begin{lemma}\label{lem2}
The approximation order of the operator $\sP_n$ is given by $\min\{\lceil\Sc q\rceil, m\}$, where $m$ is the order of the zero of $1 - |H_0(\xi)|^2$ at the origin.
\end{lemma}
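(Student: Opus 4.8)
The plan is to derive the statement from the Fourier-analytic description of approximation orders for shift-invariant spaces recorded in \cite{DRoSh3}, after reducing the quaternion-valued setting to the scalar one for which that result is formulated. The structural hypotheses it needs were already verified in the preceding theorem: $\{T_k B_q\}_{k\in\Z}$ is a Riesz sequence, $\wh{B_q}$ is continuous with $\wh{B_q}(0)=1$, and $B_q$ is refinable with symbol $H_0$. The only genuinely new feature is that $B_q$ is $\bH_{\mathbb C}$-valued. In the shift-invariant Fourier representation of the self-adjoint operator $\sP_n$ there appear quaternionic products such as $\wh{B_q}(\xi)\,\overline{\wh{B_q}(\xi+2\pi k)}$; but the quaternionic modulus is multiplicative, so every quantity controlling the error — the pointwise modulus $|\wh{B_q}(\xi)|^2$, the Gramian symbol $\sum_{k\in\Z}|\wh{B_q}(\xi+2\pi k)|^2$, and $|H_0(\xi)|^2$ — is real-valued, and the error estimates of \cite{DRoSh3} pass through these scalars alone. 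Together with the fact that the inner product on $L^2(\R,\bH_{\mathbb C})$ is the scalar-part pairing \eqref{IP complex quat}, this makes the non-commutativity of $\bH_{\mathbb R}$ immaterial. Carrying out this reduction carefully is the first, and most delicate, step.

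Granting the reduction, the cited result expresses the approximation order of $\sP_n$ as the minimum of two competing orders. The first is the Strang--Fix order of $B_q$, which caps the approximation power of the space $V_n^q$ itself and is read off from the vanishing of $\wh{B_q}$ at the aliasing frequencies $2\pi k$, $k\neq 0$. Since $\wh{B_q}(\xi)=\Xi(\xi)^q$ and $\Xi$ has a simple zero at each such point, one has $\wh{B_q}(\xi+2\pi k)=\cO(|\xi|^{\Sc q})$ near the origin; combined with the polynomial-reproduction property of order $\lceil\Sc q\rceil$ noted earlier, this fixes the Strang--Fix order at $\lceil\Sc q\rceil$. The second order measures the low-frequency fidelity of the self-dual operator: since $B_q$ serves for both analysis and synthesis without passing to a dual, near the origin $\sP_n$ acts like multiplication by $|\wh{B_q}(2^{-n}\xi)|^2$, so the governing error multiplier is $1-|\wh{B_q}(\xi)|^2$. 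Using the refinement identity $\wh{B_q}(\xi)=\prod_{j\geq 1}H_0(2^{-j}\xi)$ one verifies that $1-|\wh{B_q}(\xi)|^2$ and $1-|H_0(\xi)|^2$ have a zero of the same order $m$ at the origin, so the low-frequency order equals $m$.

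Combining the two gives approximation order $\min\{\lceil\Sc q\rceil,\,m\}$, as asserted. The hard part is entirely the reduction of the first paragraph: one must confirm that the full chain of estimates in \cite{DRoSh3} — the Fourier formula for the error, its splitting into the diagonal term ($k=0$) and the aliasing terms ($k\neq 0$), and the bounds on each — survives verbatim once all symbols are recognized as scalar and the pairing is interpreted via \eqref{IP complex quat}. The two order computations are then routine, relying only on the explicit expansions of $\Xi$ and $H_0$ near the origin.
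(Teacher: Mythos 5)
The paper offers no proof of this lemma at all --- it is stated as ``a known result adapted to our setting'' with a pointer to \cite{DRoSh3} --- which is exactly the route you take, so your outline (reduction to the scalar framework of \cite{DRoSh3}, Strang--Fix order $\lceil\Sc q\rceil$ from the zeros of $\wh{B_q}$ at $2\pi k$, $k\neq 0$, and the order-$m$ vanishing of $1-|H_0(\xi)|^2$ at the origin) matches the paper's intent and is consistent with how the lemma is used in the theorem that follows. One small caution: the modulus on $\bH_{\mathbb C}$, where $\wh{B_q}$ actually takes its values, is \emph{not} multiplicative ($\bH_{\mathbb C}$ has zero divisors), only submultiplicative up to a constant --- that is all the order estimates require, but the word ``multiplicative'' in your reduction step should be weakened accordingly.
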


We then arrive at the next result which provides the approximation order of the shift-invariant spaces associated with $B_q$.
\begin{theorem}
The operator $\sP_n$ defined in \eqref{eq16} provides approximation order $2$.
\end{theorem}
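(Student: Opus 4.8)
The plan is to apply Lemma~\ref{lem2}: the approximation order of $\sP_n$ equals $\min\{\lceil\Sc q\rceil,m\}$, where $m$ is the order of vanishing of $1-|H_0(\xi)|^2$ at $\xi=0$. Since $\Sc q>1$ forces $\lceil\Sc q\rceil\ge 2$, it suffices to show that $m=2$; then $\min\{\lceil\Sc q\rceil,2\}=2$.

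First I would use the closed form of the mask already established in this section, $H_0(\xi)=2^{-q}(1+e^{-i\xi})^q$, and compute the scalar $|H_0(\xi)|^2$ explicitly. The clean way is to record, once and for all, the modulus of a quaternionic power of a \emph{complex} base: for $z\in\C$ with $-\pi<\arg z\le\pi$ and $q=a+v\in\bH_{\mathbb R}$, the definition \eqref{eq2.1} writes $z^q$ as $z^a\cos(|v|\log z)+\tfrac{v}{|v|}\,z^a\sin(|v|\log z)$, a complex scalar part plus $v/|v|$ times a complex scalar; taking the $\bH_{\mathbb C}$-modulus and using the identity $|\cos w|^2+|\sin w|^2=\cosh(2\Im w)$ underlying \eqref{eq2.3a} yields $|z^q|^2=|z|^{2a}\cosh(2|v|\arg z)$. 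Applying this with $z=1+e^{-i\xi}$, for which $|1+e^{-i\xi}|^2=4\cos^2(\xi/2)$ and $\arg(1+e^{-i\xi})=-\xi/2$ near the origin, and with $z=2$, for which $\arg 2=0$ so $|2^q|^2=2^{2a}$, the powers of $2$ cancel and I obtain the compact formula $|H_0(\xi)|^2=\cos^{2a}(\xi/2)\,\cosh(|v|\xi)$ for $|\xi|<\pi$.

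Next I would read off $m$ by a Taylor expansion at $\xi=0$, most conveniently through the logarithm:
\[
\log|H_0(\xi)|^2=2a\log\cos(\tfrac{\xi}{2})+\log\cosh(|v|\xi)=\Big(\tfrac{|v|^2}{2}-\tfrac{a}{4}\Big)\xi^2+\cO(\xi^4),
\]
so that $1-|H_0(\xi)|^2=\big(\tfrac{a}{4}-\tfrac{|v|^2}{2}\big)\xi^2+\cO(\xi^4)$. Since $|H_0|^2$ is even in $\xi$, the expansion runs in even powers and the leading coefficient $\tfrac14(a-2|v|^2)$ is (generically) nonzero, whence $m=2$. Combining this with $\lceil\Sc q\rceil\ge 2$ and Lemma~\ref{lem2} gives approximation order $\min\{\lceil\Sc q\rceil,2\}=2$, as claimed.

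The main work, and the place where care is needed, is the modulus computation in $\bH_{\mathbb C}$: the exponent $q$ is a genuine quaternion while the base $1+e^{-i\xi}$ is complex, so $\cos(|v|\log z)$ and $\sin(|v|\log z)$ are complex numbers and one must keep the complex and quaternionic conjugations separate in order to land on the factor $\cosh(2|v|\arg z)$ rather than losing it. A secondary subtlety is the exceptional locus $a=2|v|^2$, where the $\xi^2$ coefficient vanishes, $m\ge 4$, and the approximation order of this (non-orthogonal, self-dual) operator would exceed $2$; away from this set --- in particular for the parameter ranges of interest --- the order is exactly $2$, which is the content of the theorem.
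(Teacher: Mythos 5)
Your proof is correct and follows the same route as the paper --- invoke Lemma~\ref{lem2} and determine the order $m$ of the zero of $1-|H_0(\xi)|^2$ at the origin --- but you execute the key computation more carefully than the paper does, and in doing so you expose a genuine oversight in the published argument. The paper simply writes $1-|H_0(\xi)|^2 = 1-(\cos\tfrac{\xi}{2})^{2q} = \xi^2\,\cK(\xi)$ with $\cK(0)\neq 0$; the expression $(\cos\tfrac{\xi}{2})^{2q}$ is not even a real scalar, and the formula silently discards the contribution of the vector part of $q$. Your identity $|z^q|^2=|z|^{2a}\cosh(2|v|\arg z)$ (using $|\cos w|^2+|\sin w|^2=\cosh(2\Im w)$, exactly the device behind \eqref{eq2.3a}, together with multiplicativity of the modulus when one factor such as $2^{-q}$ is a genuine real quaternion) gives the correct scalar $|H_0(\xi)|^2=\cos^{2a}(\xi/2)\cosh(|v|\xi)$, whose expansion $1-|H_0(\xi)|^2=\tfrac14(a-2|v|^2)\xi^2+\cO(\xi^4)$ shows that the paper's assertion $\cK(0)\neq 0$ fails precisely on the locus $a=2|v|^2$. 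There $m\geq 4$, so by Lemma~\ref{lem2} the approximation order is $\min\{\lceil a\rceil,m\}\geq 2$ and the theorem survives as a lower bound (and still equals $2$ when $\lceil a\rceil=2$), but it is not exactly $2$ once $\lceil a\rceil\geq 3$; your explicit identification of this exceptional set is a genuine improvement on the paper's one-line proof and would be worth recording as a caveat to the theorem's statement.
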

\begin{proof}
Note that 
\[
1 - |H_0(\xi)|^2 = 1 - (\cos\tfrac{\xi}{2})^{2 q} = \xi^2\,\cK(\xi),
\]
where $\cK$ is a function not vanishing at the origin. The result now follows from Lemma \ref{lem2}.
\end{proof}
We note here the rotation-covariance property of the inner products $\langle B_q,f\rangle$.  For $f\in L^2({\mathbb R}, {\mathbb H}_{\mathbb C})$ and $\sigma\in\text{SO}(3)$, by Lemma \ref{rot cov} we have
$$\langle B_q,(1\otimes\sigma )f\rangle =\langle (1\otimes\sigma^T)B_q,f\rangle =\langle B_{(1\otimes\sigma^T )(q)},f\rangle .$$

\section{Convergence to Gaussian Functions}
Next, we investigate the convergence of quaternionic B-splines to modulated and shifted Gaussians when $\Sc (q) \to \infty$. First, we consider the case of pointwise convergence in the Fourier domain.

\begin{theorem}\label{pointwise cvgence}
Fix a  vector $v\in \bH_{\mathbb R}$. The quaternionic B-splines $B_q$ $(q=a+v,\ a>0)$ converge pointwise in the Fourier domain to a shifted and modulated Gaussian as $a\to\infty$ in the sense that 
\[
\lim_{a\to\infty} \frac{\wh{B_q}(\xi /\sqrt a)}{e^{-i\sqrt a\xi/2}\, e^{-|v|/a}\, e^{-(\xi/\sqrt{24}+i v/\sqrt{a})^2}\,e^{-\xi^2 v/24a}} = 1.
\]
\end{theorem}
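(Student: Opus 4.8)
The plan is to collapse the whole quotient to a single exponential with exponent in the commutative subalgebra $\cA := \Span_{\mathbb C}\{1,v\}$ of $\bH_{\mathbb C}$ generated by the (central) complex unit $i$ and the fixed vector $v$. First I would record the succinct form of $\wh{B_q}$. For any complex $w$, the exponential power series together with $(v/|v|)^2=-1$ and the centrality of $i$ give $\cos w + \tfrac{v}{|v|}\sin w = e^{(v/|v|)w}$; taking $w=|v|\log\Xi(\xi)$ in \eqref{Bhat} and using $\Xi(\xi)^{\Sc q}=e^{(\Sc q)\log\Xi(\xi)}$ yields
\[
\wh{B_q}(\xi) = e^{q\log\Xi(\xi)},
\]
where $\log$ is the principal branch, well-defined since $\gr\Xi\cap(\R^-\times\{0\})=\emptyset$. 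Because $\log\Xi(\xi)$ is a complex scalar, the exponent $q\log\Xi(\xi)$ lies in $\cA$, and since $\cA$ is commutative the quotient in the statement is unambiguous: both numerator and the (manifestly $\cA$-valued) denominator are exponentials of elements of $\cA$, so the ratio equals $e^{N-D}$ with $N,D\in\cA$ the respective exponents.

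Next I would Taylor expand. From $\Xi(\xi) = e^{-i\xi/2}\,\dfrac{\sin(\xi/2)}{\xi/2}$ one obtains
\[
\log\Xi(\xi) = -\tfrac{i\xi}{2} - \tfrac{\xi^2}{24} + \cO(\xi^4), \qquad \xi\to 0.
\]
Since the asserted limit is pointwise in $\xi$, I substitute $\xi\mapsto\xi/\sqrt a$, so the argument tends to $0$ as $a\to\infty$, and multiply by $q=a+v$, tracking powers of $a$. Using $a\cdot(\xi/\sqrt a)=\sqrt a\,\xi$ and $a\cdot(\xi/\sqrt a)^2=\xi^2$, the scalar-times-$a$ contributions produce the two dominant terms $-\tfrac{i\sqrt a\,\xi}{2}$ (order $\sqrt a$) and $-\tfrac{\xi^2}{24}$ (order $1$), while every remaining contribution — the $v$-multiplied terms and $a$ times the $\cO(\xi^4)$ remainder — is $\cO(a^{-1/2})$. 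Hence
\[
N = q\,\log\Xi(\xi/\sqrt a) = -\tfrac{i\sqrt a\,\xi}{2} - \tfrac{\xi^2}{24} + \cO(a^{-1/2}).
\]

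I would then expand the denominator's exponent the same way. Writing out $-(\xi/\sqrt{24}+iv/\sqrt a)^2$ and using $(iv)^2=i^2v^2=|v|^2$, its $\cO(1)$ part is exactly $-\tfrac{\xi^2}{24}$, the cross term is $\cO(a^{-1/2})$, and the factors $e^{-|v|/a}$, $e^{-\xi^2 v/24a}$ and the $-|v|^2/a$ piece are all $\cO(a^{-1})$; together with the explicit $-\tfrac{i\sqrt a\,\xi}{2}$ this gives $D = -\tfrac{i\sqrt a\,\xi}{2} - \tfrac{\xi^2}{24} + \cO(a^{-1/2})$. Thus $N-D=\cO(a^{-1/2})\to 0$, and by continuity of the exponential on the finite-dimensional Banach algebra $\cA$ the ratio $e^{N-D}\to e^0=1$, which is the claim.

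The main obstacle, and the step carrying the real content, is the first one: establishing the identity $\wh{B_q}=e^{q\log\Xi}$ and the commutative-subalgebra framework. This is what makes the noncommutative quotient meaningful and collapses the problem to a scalar-type asymptotic; without it one cannot cleanly organize the exact cancellation of the divergent $\sqrt a$-phase and the matching of the $\cO(1)$ terms. The subsequent order-counting is routine precisely because convergence is only pointwise, so the argument $\xi/\sqrt a$ of $\log\Xi$ is driven to $0$ and the Taylor remainder requires no uniform control.
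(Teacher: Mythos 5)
Your proof is correct and follows essentially the same route as the paper: Taylor-expand $\log\Xi$ to second order with an $\cO(\xi^4)$ remainder, multiply by $q$, substitute $\xi\mapsto\xi/\sqrt a$, and observe that the exponents of numerator and denominator agree up to terms that vanish as $a\to\infty$. You are somewhat more explicit than the paper about the identity $\wh{B_q}=e^{q\log\Xi}$ and the commutative subalgebra that makes the quotient of exponentials collapse to $e^{N-D}$, and you work with the denominator as actually written in the statement, but the underlying argument is the same.
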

\begin{proof}
Note that by Taylor's remainder theorem, there is a constant $C$ and a function $R(\xi )$ with $|R(\xi )|\leq C|\xi |^4$ such that
$$\log\bigg(\frac{1-e^{-i\xi}}{i\xi}\bigg)=\bigg(-\frac{i\xi}{2}-\frac{\xi^2}{24}\bigg)+R(\xi ).$$
Multiplying both sides by $q=a+v$, exponentiating both sides, and replacing $\xi $ by $\xi /\sqrt a$ gives
$$\wh B_q(\xi /\sqrt a )=e^{-q(i\xi /2\sqrt a+\xi ^2/24a)}e^{R(\xi /\sqrt a)}$$
so that 
\begin{equation}
\frac{\wh{B_q}(\xi /\sqrt a)}{e^{-i\sqrt a\xi/2}e^{-\xi^2/24}e^{-i\xi v/\sqrt a}e^{-\xi^2 v/24a}}=e^{g(\xi /\sqrt a)}=1+\mathcal{O}(|\xi |^4|q|/a^2)\label{gaussian cvgence}
\end{equation}
and the result follows by allowing $a\to\infty$.
\end{proof}
%\bigg|\log\bigg(\frac{1-e^{-i\xi}}{i\xi}\bigg)-\bigg(-\frac{i\xi}{2}-\frac{\xi^2}{24}\bigg)\bigg|\leq C|\xi |^4.
%\]
%Therefore, with $q=a+v$, 
%\[
%\bigg|q\log\bigg(\frac{1-e^{-i\xi/\sqrt{a}}}{i\xi /\sqrt a}\bigg)-(a+v)\bigg(-\frac{i\xi}{2\sqrt{a}}-\frac{\xi^2}{24a}\bigg)\bigg|\leq C\frac{|\xi |^4|q|}{a^2}.
%\]
%Exponentiating both sides and applying Lemma \ref{exp bounds} yields
%\begin{equation}
%\wh{B_q}(\xi /\sqrt a})=e^{-i\sqrt a\xi/2}e^{-\xi^2/24}e^{-i\xi v/\sqrt a}e^{-\xi^2 v/24a}
%\begin{equation}
%\bigg|\frac{\wh{B_q}(\xi /\sqrt a)}{e^{-i\sqrt a\xi/2}e^{-\xi^2/24}e^{-i\xi v/\sqrt a}e^{-\xi^2 v/24a}}\bigg|\leq e^{C|\xi|^4|q|/a^2}.\label{gaussian cvgence}
%\end{equation}
We note that in the denominator on the left hand side of (\ref{gaussian cvgence}), we have
$|e^{-\xi^2v/24a}|=1$ and
\[
|e^{-i\xi v/\sqrt a}|= \bigg|\cosh \bigg(\frac{|v|\xi}{\sqrt a}\bigg)+\frac{iv}{|v|}\sinh\bigg(\frac{|v|\xi}{\sqrt a}\bigg)\bigg|=\sqrt{\cosh\bigg(\frac{2|v|\xi}{\sqrt a}\bigg)}\leq e^{|v||\xi|/\sqrt a}.\qedhere
\]
%\end{proof}
%
In order to establish convergence to a modulated and shifted Gaussian in $L^p$-norm, we require the following lemma and its corollary.
\begin{lemma}\label{lem7} 
Let $q=a+v\in {\mathbb H}_{\mathbb R}$ with $a>0$. Then we have the Fourier transform relation 
\begin{equation}
\int_{-\infty}^\infty e^{-q\xi^2}e^{it\xi}\, d\xi =\sqrt{\dfrac{2\pi}{q}}e^{-t^2/(4q)}\label{FT relation}
\end{equation}
where $\sqrt{q}=\dfrac{q+|q|}{\sqrt 2\sqrt{a+|q|}}$.
\end{lemma}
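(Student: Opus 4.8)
The plan is to reduce the quaternionic integral to the classical complex Gaussian integral by exploiting that, after one parity observation, the integrand takes values in a commutative subalgebra isomorphic to $\mathbb{C}$. First I would use the quaternionic exponential together with $v^2 = -|v|^2$ to write, for the central scalar $a$ and the pure part $v$,
\[
e^{-q\xi^2} = e^{-a\xi^2}e^{-v\xi^2} = e^{-a\xi^2}\Bigl[\cos(|v|\xi^2) - \tfrac{v}{|v|}\sin(|v|\xi^2)\Bigr],
\]
which is the specialization of \eqref{eq2.1} (the two exponential factors commute because $a$ is scalar). The crucial observation is that this factor is an \emph{even} function of $\xi$, since both $\cos(|v|\xi^2)$ and $\sin(|v|\xi^2)$ depend on $\xi$ only through $\xi^2$.

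Next I would expand $e^{it\xi} = \cos(t\xi) + i\sin(t\xi)$, where $i$ is the complex Fourier unit (central in $\mathbb{H}_{\mathbb{C}}$ and a priori distinct from the quaternionic direction $v/|v|$). Multiplying the even factor above by the odd function $\sin(t\xi)$ produces an odd integrand whose integral over $\mathbb{R}$ vanishes; hence the complex unit $i$ drops out entirely and
\[
\int_{-\infty}^\infty e^{-q\xi^2}e^{it\xi}\,d\xi = \int_{-\infty}^\infty e^{-a\xi^2}\Bigl[\cos(|v|\xi^2) - \tfrac{v}{|v|}\sin(|v|\xi^2)\Bigr]\cos(t\xi)\,d\xi.
\]
The right-hand integrand now takes values in the real, two-dimensional commutative subalgebra $V := \mathrm{span}_{\mathbb{R}}\{1, v/|v|\}$. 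Since $(v/|v|)^2 = -1$, the map $\phi: V \to \mathbb{C}$ with $\phi(1)=1$ and $\phi(v/|v|)=i$ is an algebra isomorphism carrying $q = a+v$ to the complex number $a+i|v|$, and carrying $e^{-a\xi^2}[\cos(|v|\xi^2) - \tfrac{v}{|v|}\sin(|v|\xi^2)]$ to $e^{-(a+i|v|)\xi^2}$.

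I would then transport the problem through $\phi$ and invoke the classical complex Gaussian cosine integral
\[
\int_{-\infty}^\infty e^{-\alpha\xi^2}\cos(t\xi)\,d\xi = \sqrt{\tfrac{\pi}{\alpha}}\,e^{-t^2/(4\alpha)}, \qquad \mathrm{Re}\,\alpha > 0,
\]
proved by completing the square and shifting the integration contour (Cauchy's theorem justifying the shift). Applying this with $\alpha = a + i|v|$, which has positive real part $a>0$, and pulling back through $\phi^{-1}$ yields the claimed identity, with the principal branch $\sqrt{\alpha} = \frac{\alpha+|\alpha|}{\sqrt 2\sqrt{\mathrm{Re}\,\alpha + |\alpha|}}$ transporting to the stated branch $\sqrt{q} = \frac{q+|q|}{\sqrt 2\sqrt{a+|q|}}$; here one checks that $|q|$ and $|a+i|v||$ agree under $\phi$, so the square-root formula is consistent. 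The main obstacle is conceptual rather than computational: one must keep the two commuting square roots of $-1$ in $\mathbb{H}_{\mathbb{C}}$ — the Fourier unit $i$ and the quaternionic direction $v/|v|$ — carefully separated, and it is exactly the parity argument that decouples them and collapses the computation onto the single complex line $V$ where the classical formula applies. Verifying that the prescribed branch of $\sqrt{q}$ is precisely the one produced by $\phi^{-1}$ of the principal complex square root is the remaining point requiring care.
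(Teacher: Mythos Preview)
Your argument is correct and takes a genuinely different, more economical route than the paper. The paper rewrites $\cos(|v|\xi^2)$ and $\sin(|v|\xi^2)$ via complex exponentials, thereby splitting $I(t)$ into two ordinary complex Gaussian integrals with parameters $z = a - i|v|$ and $\overline{z}$; it evaluates each by the classical formula and then carries out roughly a page of algebra to recombine the pieces into a single quaternionic expression, finally expanding the right-hand side of \eqref{FT relation} separately and matching the two. Your parity observation eliminates the central Fourier unit $i$ at the outset and confines the entire problem to the commutative real subalgebra $V = \mathrm{span}_{\mathbb R}\{1, v/|v|\}\cong\mathbb C$, so that a \emph{single} application of the classical cosine--Gaussian integral, transported through the isomorphism $\phi$, already delivers both sides in closed form with the correct square-root branch; no recombination step is needed. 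What the paper's longer calculation buys is explicitness---it never invokes a structural isomorphism---and its decomposition into $A_1$, $A_2$ is robust when the even symmetry is absent (as in the paper's Corollary~\ref{cor2}, where an extra factor $e^{-i\alpha q\xi}$ destroys parity). That said, the paper proves Corollary~\ref{cor2} by completing the square and reducing back to the present lemma, so your streamlined proof slots in without any loss downstream.
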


\begin{proof} Let $I(t)=\int_{-\infty}^\infty e^{-q\xi^2}e^{it\xi}\, d\xi$. Note that 
\begin{align*}
e^{-q\xi^2}=e^{-a\xi^2}e^{-v\xi^2}&=e^{-a\xi^2}[\cos (|v|\xi^2)-\frac{v}{|v|}\sin (|v|\xi^2)]\\
&=\frac{e^{-a\xi^2}}{2}[e^{i|v|\xi^2}+e^{-i|v|\xi^2}-\frac{v}{i|v|}(e^{i|v|\xi^2}-e^{-i|v|\xi^2})],
\end{align*} 
from which we see that 
\begin{equation}
I(t)=A_1+A_2-\frac{v}{i|v|}(A_1-A_2)\label{A_1+A_2}
\end{equation}
where
$A_1=\frac{1}{2}\int\limits_{-\infty}^\infty e^{-a\xi^2}e^{i|v|\xi}e^{i\xi t}\, d\xi$ and $A_2=\frac{1}{2}\int\limits_{-\infty}^\infty e^{-a\xi^2}e^{-i|v|\xi}e^{i\xi t}\, d\xi$.
Let $z=a-i|v|\in{\mathbb C}$. Then 
$A_1=\frac{1}{2}\int\limits\limits_{-\infty}^\infty e^{-z\xi^2}e^{i\xi t}\, d\xi=\frac{1}{2}\sqrt{\frac{2\pi}{z}}e^{-t^2/4z}$
since $\Re (z)=a>0$. Similarly, we have $A_2=\frac{1}{2}\sqrt{\frac{2\pi}{\overline z}}e^{-t^2/4\overline z}$. Since $\sqrt{z}=\frac{z+|z|}{\sqrt 2\sqrt{a+|z|}}$, $\sqrt{\overline z}=\frac{\overline z+|z|}{\sqrt 2\sqrt{a+|z|}}$, $\frac{1}{z+|z|}=\frac{\overline z+|z|}{2|z|(a+|z|)}$, and $\frac{1}{\overline z +|z|}=\frac{z+|z|}{2|z|(a+|z|)}$, we obtain from (\ref{A_1+A_2})
\begin{align}
I(t)&=\frac{1}{2}\bigg[\bigg(\frac{\sqrt{4\pi}\sqrt{a+|z|}}{z+|z|}e^{-\overline zt^2/4|z|^2}+\frac{\sqrt{4\pi}\sqrt{a+|z|}}{\overline z+|z|}e^{-zt^2/4|z|^2}\bigg)\notag\\
&\qquad\qquad-\frac{1}{2}\frac{v}{i|v|}\bigg(\frac{\sqrt{4\pi}\sqrt{a+|z|}}{z+|z|}e^{-\overline zt^2/4|z|^2}-\frac{\sqrt{4\pi}\sqrt{a+|z|}}{\overline z+|z|}e^{-zt^2/4|z|^2}\bigg)\bigg]\notag\\
&=\frac{\sqrt{\pi}\sqrt{a+|z|}}{2}e^{-at^2/4|z|^2}\bigg[\bigg(\frac{e^{-i|v|t^2/4|z|^2}}{z+|z|}+\frac{e^{i|v|t^2/4|z|^2}}{\overline z+|z|}\bigg)-\frac{v}{i|v|}\bigg(\frac{e^{-i|v|t^2/4|z|^2}}{z+|z|}+\frac{e^{i|v|t^2/4|z|^2}}{\overline z+|z|}\bigg)\bigg]\notag\\
&=\frac{\sqrt{\pi}}{2|z|\sqrt{a+|z|}}e^{-at^2/4|z|^2}\bigg[(\overline z +|z|)e^{-i|v|t^2/4|z|^2}+(z+|z|)e^{i|v|t^2/4|z|^2}\notag\\
&\qquad\qquad\qquad\qquad\qquad\qquad\qquad -\frac{v}{i|v|}\bigg((\overline z+|z|)e^{-i|v|t^2/4|z|^2}-(z+|z|)e^{i|v|t^2/4|z|^2}\bigg)\bigg]\notag\\
&=\frac{\sqrt{\pi}}{2|z|\sqrt{a+|z|}}e^{-at^2/4|z|^2}\bigg[(a+i|v|+|z|)\bigg(\cos\bigg(\frac{|v|t^2}{4|z|^2}\bigg)-i\sin\bigg(\frac{|v|t^2}{4|z|^2}\bigg)\bigg)\notag\\
&\qquad\qquad\qquad\qquad -(a-i|v|+|z|)\bigg(\cos\bigg(\frac{|v|t^2}{4|z|^2}\bigg)+i\sin\bigg(\frac{|v|t^2}{4|z|^2}\bigg)\bigg)\notag\\
&\qquad\qquad\qquad\qquad -\frac{v}{i|v|}\bigg[(a+i|v|+|z|)\bigg(\cos\bigg(\frac{|v|t^2}{4|z|^2}\bigg)-i\sin\bigg(\frac{|v|t^2}{4|z|^2}\bigg)\bigg)\notag\\
&\qquad\qquad\qquad\qquad\qquad -(a-i|v|+|z|)\bigg(\cos\bigg(\frac{|v|t^2}{4|z|^2}\bigg)+i\sin\bigg(\frac{|v|t^2}{4|z|^2}\bigg)\bigg)\bigg]\bigg]\notag\\
&=\frac{\sqrt\pi}{|z|\sqrt{a+|z|}}e^{-at^2/4|z|^2}\bigg[(a+|z|)\cos\bigg(\frac{|v|t^2}{4|z|^2}\bigg)+|v|\sin \bigg(\frac{|v|t^2}{4|z|^2}\bigg)\notag\\
&\qquad\qquad\qquad\qquad\qquad -\frac{v}{|v|}\bigg[|v|\cos \bigg(\frac{|v|t^2}{4|z|^2}\bigg)-(a+|z|)\sin\bigg(\frac{|v|t^2}{4|z|^2}\bigg)\bigg]\bigg]\notag\\
&=\frac{\sqrt{\pi}}{|z|\sqrt{a+|z|}}e^{-at^2/4|z|^2}(a-v+|z|)\bigg[\cos\bigg(\frac{|v|t^2}{4|z|^2}\bigg)+\frac{v}{|v|}\sin\bigg(\frac{|v|t^2}{4|z|^2}\bigg)\bigg].\label{I comp1}
\end{align}
On the other hand, since $|q|=|z|$, the right hand side of (\ref{FT relation}) is equal to
\begin{align}
\sqrt{\frac{2\pi}{q}}e^{-t^2/4q}&=\frac{\sqrt{2\pi}\sqrt{2}\sqrt{a+|q|}}{q+\overline q}e^{-\overline qt^2/4|q|^2}\notag\\
&=\frac{\sqrt{4\pi}\sqrt{a+|q|}(\overline q+|q|)}{(q+|q|)(\overline q+|q|)}e^{-at^2/4|q|^2}e^{vt^2/4|q|^2}\notag\\
&=\frac{\sqrt{\pi}}{|z|\sqrt{a+|z|}}e^{-at^2/4|z|^2}(a-v+|z|)e^{vt^2/4|z|^2}\notag\\
&=\frac{\sqrt{\pi}}{|z|\sqrt{a+|z|}}e^{-at^2/4|z|^2}(a-v+|z|)\bigg[\cos\bigg(\frac{|v|t^2}{4|z|^2}\bigg)+\frac{v}{|v|}\sin\bigg(\frac{|v|t^2}{4|z|^2}\bigg)\bigg]
%&=\frac{\sqrt{\pi}}{|z|\sqrt{a+|z|}}e^{-at^2/4|z|^2}\bigg[(a+|z|)\cos\bigg(\frac{|v|t^2}{4|z|^2}\bigg)+|v|\sin\bigg(\frac{|v|t^2}{4|z|^2}\bigg)\\
%&\qquad\qquad\qquad\qquad\qquad\qquad +\frac{v}{|v|}\bigg(-|v|\cos\bigg(\frac{|v|t^2}{4|z|^2}\bigg)+(a+|z|)\sin\bigg(\frac{|v|t^2}{4|z|^2}\bigg)\bigg)\bigg].
\label{I comp2}
\end{align}
Comparing (\ref{I comp1}) and (\ref{I comp2}) gives the result.
\end{proof}

\begin{corollary} \label{cor2}
If $q=a+v\in{\mathbb H}$ with $a>0$ and $\alpha\in{\mathbb R}$, then 
$$\int_{-\infty}^\infty e^{-q\xi^2}e^{-i\alpha q\xi}e^{i\xi t}\, d\xi =\sqrt{\frac{2\pi}{q}}e^{-q\alpha^2/4}e^{-\alpha t/2}e^{-t^2/4q}.$$
\end{corollary}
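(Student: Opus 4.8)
The plan is to reduce the integral to the one already evaluated in Lemma~\ref{lem7} by completing the square in the exponent and then translating the contour of integration back to the real axis. The observation that makes all the algebra legitimate despite the noncommutativity of $\bH_{\mathbb C}$ is that every quantity occurring in the exponent---namely $q=a+v$, the real scalars $\alpha,t,\xi$, and the Fourier unit $i$---lies in the commutative subalgebra $\mathbb{C}\oplus\mathbb{C}\,v$ generated by $i$ and $v$ (recall $v^2=-|v|^2\in\R$). Within this subalgebra one may complete the square, invert $q$ (possible since $a=\Sc q>0$ forces $q\neq 0$), and manipulate exponentials exactly as over $\mathbb{C}$. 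Concretely, I would first combine the two $q$-homogeneous terms and write
\[
-q\xi^2-i\alpha q\xi=-q\Bigl(\xi+\tfrac{i\alpha}{2}\Bigr)^2-\tfrac{q\alpha^2}{4},
\]
which pulls the damping factor $e^{-q\alpha^2/4}$ out of the integral and leaves $\int_\R e^{-q(\xi+i\alpha/2)^2}e^{it\xi}\,d\xi$.

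Next I would substitute $\eta=\xi+\tfrac{i\alpha}{2}$. Expanding $e^{it\xi}=e^{it\eta}\,e^{\alpha t/2}$ peels off the modulation factor in $\alpha t$ and leaves $\int_{\R+i\alpha/2}e^{-q\eta^2}e^{it\eta}\,d\eta$ over a horizontal translate of the real line. The crux of the argument is to shift this contour back down to $\R$. I would justify this by decomposing the $\bH_{\mathbb C}$-valued integrand into its scalar part and its $v$-part---each an ordinary $\mathbb{C}$-valued entire function of $\eta$---and applying Cauchy's theorem to each on a large rectangle. The vertical sides vanish in the limit because the dominant Gaussian factor $e^{-a\,\Re(\eta^2)}$ (with $a=\Sc q>0$) decays like $e^{-ax^{2}}$ as $|\Re\eta|\to\infty$, which overwhelms the at-most-exponential growth $e^{O(|v||\Re\eta|)}$ contributed by the vector part. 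Once the contour is returned to the real axis, Lemma~\ref{lem7} applies verbatim to give $\int_\R e^{-q\eta^2}e^{it\eta}\,d\eta=\sqrt{2\pi/q}\,e^{-t^2/4q}$, and collecting this with the factors $e^{-q\alpha^2/4}$ and the modulation term recovers the claimed formula.

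The main obstacle is precisely the contour-deformation step: one must verify that the two component integrands genuinely decay in the horizontal strip between $\R$ and $\R+i\alpha/2$, and, more delicately, that the translation is consistent with the specific branch $\sqrt{q}=(q+|q|)/(\sqrt2\sqrt{a+|q|})$ that was fixed in Lemma~\ref{lem7}. An alternative route that avoids the deformation altogether would be to observe that both sides of the identity in Lemma~\ref{lem7} are analytic in the parameter $t$ throughout $\mathbb{C}\oplus\mathbb{C}\,v$---the integral converging locally uniformly thanks to the same Gaussian domination---so that the real-$t$ identity extends to the value $t-\alpha q$, from which the corollary drops out after simplifying $-(t-\alpha q)^2/4q$. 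Either way, once the commutativity of the relevant subalgebra is noted, the remaining work is routine bookkeeping.
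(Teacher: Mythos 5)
Your primary route is in substance the same as the paper's: complete the square in the exponent, shift the contour of integration back to the real axis, and invoke Lemma~\ref{lem7}. The difference is one of organization. The paper expands $e^{-q\xi^2}$ and $e^{-i\alpha q\xi}$ into scalar and vector components, uses product formulas to reassemble the integrand in terms of $\cos(|v|(\xi^2+i\alpha\xi))$ and $\sin(|v|(\xi^2+i\alpha\xi))$, completes the square inside those arguments, shifts the contour (with only the remark that ``standard techniques'' justify it), and then recombines everything into the form $e^{-q(\cdot)^2}$; your observation that all the relevant quantities lie in the commutative subalgebra generated by $i$ and $v$ lets you skip that bookkeeping and complete the square directly in the quaternionic exponent, and your rectangle-contour justification of the shift (the Gaussian decay $e^{-a(\Re\eta)^2}$ dominating the $e^{O(|v||\alpha||\Re\eta|)}$ growth of the vector part on the strip) is more explicit than what the paper provides. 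Your worry about compatibility with the branch $\sqrt{q}=(q+|q|)/(\sqrt{2}\sqrt{a+|q|})$ is moot: the shift returns the integral to exactly the form treated in Lemma~\ref{lem7}, so no new branch choice is introduced. Your alternative route --- analytically continuing the identity of Lemma~\ref{lem7} in the parameter $t$ to the value $t-\alpha q$ inside $\C\oplus\C v$ --- is genuinely different from the paper's argument and arguably cleaner, since it avoids the contour deformation entirely; it is made rigorous by splitting the commutative subalgebra into its two complex eigencomponents $v\mapsto\pm i|v|$, in each of which both sides are entire functions of the continued parameter. One caveat: your computation (like the paper's own proof, and like the subsequent application with $\alpha=12\sqrt{a}$, which produces $e^{6\sqrt{a}t}=e^{+\alpha t/2}$) yields the factor $e^{+\alpha t/2}$, not the $e^{-\alpha t/2}$ printed in the statement. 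The statement contains a sign typo, so you should not claim to ``recover the claimed formula'' literally --- what you recover is the correct one.
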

\begin{proof}
Let $J(t)= \int\limits_{-\infty}^\infty e^{-q\xi^2}e^{-i\alpha q\xi}e^{i\xi t}\, d\xi$. Then, since
\begin{align*}
e^{-q\xi^2}&=e^{-a\xi^2}e^{-v\xi^2}=e^{-a\xi^2}(\cos (|v|\xi^2)-\frac{v}{|v|}\sin (|v|\xi^2)),\\
e^{-i\alpha q\xi}&=e^{-i\alpha a\xi}e^{-i\alpha v\xi}=e^{-i\alpha a\xi}(\cosh (\alpha |v|\xi )-\frac{iv}{|v|}\sinh (\alpha |v|\xi )),
\end{align*}
we have
\begin{align}
J(t)&=\int_{-\infty}^\infty e^{-a\xi^2}e^{-i\alpha a\xi}(\cos (|v|\xi^2)-\frac{v}{|v|}\sin (|v|\xi^2))(\cosh (\alpha |v|\xi )-\frac{iv}{|v|}\sinh (\alpha |v|\xi ))e^{i\xi t}\, d\xi\notag\\
&=\int_{-\infty}^\infty e^{-a\xi^2}e^{-i\alpha a\xi}[\cos (|v|\xi^2)\cosh(\alpha |v|\xi )-i\sin (|v|\xi ^2)\sinh (\alpha |v|\xi)\notag\\
&\qquad\qquad\qquad\qquad -\frac{v}{|v|}(\sin(|v|\xi^2)\cosh(\alpha |v|\xi)+i\cos (|v|\xi^2)\sinh (\alpha |v|\xi ))]e^{i\xi t}\, d\xi .\label{J int1}
\end{align}
Applying the relations
\begin{align*}
\cos (\alpha )\cosh (\beta)-i\sin (\alpha )\sinh (\beta )&=\cos (\alpha +i\beta ),\\
\sin (\alpha )\cosh (\beta )+i\cos (\alpha )\sinh (\beta )&=-\sin (\alpha +i\beta),
\end{align*}
to (\ref{J int1}) yields
\begin{align*}
J(t)&=\int_{-\infty}^\infty e^{-a(\xi^2+i\alpha\xi )}\cos (|v|(\xi^2+i\alpha\xi ))e^{i\xi t}\, d\xi\\
&\qquad\qquad -\frac{v}{|v|}\int_{-\infty}^\infty e^{-a(\xi ^2+i\alpha\xi )}\sin (|v|(\xi^2+i\alpha\xi ))e^{i\xi t}\, d\xi\\
&=e^{-a\alpha^2/4}\int_{-\infty}^\infty e^{-a(\xi +i\alpha /2)^2}\cos (|v|((\xi +i\alpha /2)^2+\alpha^2/4))e^{i\xi t}\, d\xi\\
&\qquad\qquad -e^{-a\alpha^2/4}\frac{v}{|v|}\int_{-\infty}^\infty e^{-a(\xi +i\alpha /2)^2}\sin (|v|((\xi +i\alpha /2)^2+\alpha^2/4))e^{i\xi t}\, d\xi\\
&=e^{-a\alpha^2/4}\int_{-\infty}^\infty e^{-a\xi^2}\cos (|v|(\xi^2+\alpha^2/4))e^{i(\xi-i\alpha /2)t}\, d\xi\\
&\qquad\qquad -\frac{v}{|v|}e^{-a\alpha^2/4}\int_{-\infty}^\infty e^{-a\xi^2}\sin (|v|(\xi^2+\alpha^2/4))e^{i(\xi-i\alpha /2)t}\, d\xi
\end{align*}
where we have applied the complex change of variables $\xi\to \xi -i\alpha /2$. This requires a change of the contour of integration. However, in this situation, standard techniques of compiex analysis may be applied to show  that the contour remains unchanged. Hence we have
\begin{align*}
J(t)&=e^{-a\alpha^2/4}e^{\alpha t/2}\int_{-\infty}^\infty e^{-a\xi^2}[\cos (|v|(\xi^2+\alpha^2/4))-\frac{v}{|v|}\sin (|v|(\xi^2+\alpha^2/4))]e^{i\xi t}\, d\xi\\
&=e^{-a\alpha^2/4}e^{\alpha t/2}\int_{-\infty}^\infty e^{-a\xi^2}e^{-v(\xi^2+\alpha^2/4)}e^{i\xi t}\, d\xi\\
%&=e^{-a\alpha^2/4}e^{-v\alpha^2/4}e^{\alpha t/2}\int_{-\infty}^\infty e^{-a\xi^2}e^{-v\xi^2}e^{i\xi t}\, d\xi\\
&=e^{-q\alpha^2/4}e^{\alpha t/2}\int_{-\infty}^\infty e^{-q\xi^2}e^{i\xi t}\, dt=e^{-q\alpha^2/4}e^{\alpha t/2}\sqrt{\frac{2\pi}{q}}e^{-t^2/4q},
\end{align*}
where in the last line we have applied Lemma \ref{lem7}.
\end{proof} 
For later purposes, we require a special form for the integrand in Corollary \ref{cor2}. To this end, let $q=a+v$, $q'=\dfrac{q}{24a}$ (which has real part $\dfrac{1}{24}>0$) and $\alpha =12\sqrt a$.
Then 
\[
e^{-i\sqrt a\xi /2}e^{-\xi^2/24}e^{-i\xi v/2\sqrt{a}}e^{-\xi^2v/24a}=e^{-q'\xi^2}e^{-i\alpha q'\xi}.
\]
Therefore, by Corollary \ref{cor2}
\begin{align}
&\int_{-\infty}^\infty e^{-i\sqrt a\xi /2}e^{-\xi^2/24}e^{-i\xi v/2\sqrt{a}}e^{-\xi^2v/24a}e^{i\xi t}\, d\xi=\int_{-\infty}^\infty e^{-q'\xi^2}e^{-i\alpha q'\xi}e^{i\xi t}\, d\xi\nonumber\\
&\qquad\qquad =\sqrt{\frac{2\pi}{q'}}e^{-q'\alpha^2/4}e^{-\alpha t/2}e^{-t^2/4q}\nonumber\\
&\qquad\qquad =\frac{2\sqrt 3\sqrt{\pi a}}{|q|\sqrt{a+|q|}}(a-v+|q|)e^{-3(a+v)/2}e^{6\sqrt at}e^{-6at^2(a-v)/(a^2+|v|^2)}.
\end{align}
Lastly, we need a result from \cite{UAE}.
\begin{lemma}\label{lem8}
Let $\xi\in\R$ and $a\geq 2$. Then
\be\label{eq7.8}
\left(\sinc \frac{\pi\xi}{\sqrt{a}}\right)^a \leq e^{-\xi^2} + (1 - \chi_{[-1,1]})(\xi/2)\,\frac{2}{(\pi \xi)^2}.
\ee
\end{lemma}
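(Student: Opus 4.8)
The plan is to prove \eqref{eq7.8} by decomposing the range of $\xi$ according to where the rescaled argument $u := \pi\xi/\sqrt a$ sits relative to the principal lobe of the sinc, exploiting the scaling $\xi \mapsto \pi\xi/\sqrt a$ to cancel the exponent $a$. (I take the normalization $\sinc(u) = \sin(\pi u)/(\pi u)$, under which \eqref{eq7.8} holds and which matches \cite{UAE}; where the sinc is negative I read the left-hand side as $|\sinc(\pi\xi/\sqrt a)|^a$, the quantity that actually arises when dominating $|\wh{B_a}|$.) Since both sides are even in $\xi$, I first reduce to $\xi \ge 0$. Two elementary pointwise bounds drive the argument: a Gaussian majorant $0 \le \sinc(u) \le e^{-c u^2}$ on the principal lobe $0 \le u \le 1$, with $c$ chosen so that $c\pi^2 \ge 1$ (provable by comparing the power series of $\log\sinc$, whose coefficients are all nonpositive, with $-cu^2$), and the universal decay $|\sinc(u)| \le 1/(\pi|u|)$ coming from $|\sin|\le 1$.

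On the principal lobe, i.e.\ for $0 \le \xi \le \sqrt a/\pi$, raising the Gaussian bound to the power $a$ and substituting $u^2 = \pi^2\xi^2/a$ makes the $a$ cancel: $|\sinc(\pi\xi/\sqrt a)|^a \le e^{-acu^2} = e^{-c\pi^2\xi^2} \le e^{-\xi^2}$, which is already at most the right-hand side of \eqref{eq7.8} since the tail term is nonnegative. The essential feature is that the exponent in $\xi$ is independent of $a$, so this estimate is uniform over $a \ge 2$. For $a \ge 4\pi^2$ the principal lobe already reaches past $\xi = 2$, so this single step disposes of the whole region $0 \le \xi \le 2$.

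Beyond the principal lobe, $\xi > \sqrt a/\pi$, I use the decay bound, which gives $|\sinc(\pi\xi/\sqrt a)|^a \le (\sqrt a/(\pi^2\xi))^a$, and split at the threshold $\xi = 2$ appearing on the right. For $\xi > 2$ the explicit term $\tfrac{2}{(\pi\xi)^2}$ is available, and after taking logarithms and inserting $\xi > \sqrt a/\pi$ the claim collapses to an elementary inequality of the form $a \le 2\pi^a$, which holds for all $a \ge 2$.

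The step I expect to be the genuine obstacle is the intermediate band $\sqrt a/\pi < \xi \le 2$ (nonempty only for $a < 4\pi^2$), where the argument has left the principal lobe but the tail term is not yet switched on, so only $e^{-\xi^2}$ is available as a majorant. Here I must establish $(\sqrt a/(\pi^2\xi))^a \le e^{-\xi^2}$ uniformly in $a$. My plan is to set $h(\xi) := \tfrac a2\log a - a\log(\pi^2\xi) + \xi^2$ and prove $h \le 0$ on the band: since $h'(\xi) = 2\xi - a/\xi$ changes sign exactly once, $h$ attains its maximum over any interval at the endpoints, so it suffices to verify $h(\sqrt a/\pi) = a(\pi^{-2} - \log\pi) < 0$ and $h(2) < 0$, the latter by a one-variable monotonicity check in $a$ over $[2, 4\pi^2]$. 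Assembling the three regions, together with the remark that reading $(\sinc)^a$ as $|\sinc|^a$ loses nothing, yields \eqref{eq7.8}.
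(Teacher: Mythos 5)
The paper offers no proof of this lemma to compare against: it is imported verbatim from \cite{UAE} with the remark ``we need a result from \cite{UAE}'', so you are supplying an argument where the authors give only a citation. On its own terms your three-region decomposition is sound and complete. The principal-lobe step works because $\log\bigl(\sin(\pi u)/(\pi u)\bigr)=\sum_{n\ge 1}\log(1-u^2/n^2)\le -\tfrac{\pi^2}{6}u^2$, so $c=\pi^2/6$ gives $c\pi^2\ge 1$ and the rescaling cancels the exponent $a$; the far region does collapse to $a\le 2\pi^a$ after writing $(\sqrt a/(\pi^2\xi))^a=(\sqrt a/(\pi^2\xi))^{a-2}\cdot a/(\pi^4\xi^2)$ and using $\sqrt a/(\pi^2\xi)<1/\pi$; and in the intermediate band your $h$ is decreasing-then-increasing, with $h(\sqrt a/\pi)=a(\pi^{-2}-\log\pi)<0$ and $h(2)=\tfrac a2\log a-a\log(2\pi^2)+4$ decreasing in $a$ on $[2,4\pi^2]$ and equal to about $-1.27$ at $a=2$, so both endpoint checks succeed.

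The one thing you must not gloss over is the normalization: it decides whether the statement is true at all, and you have resolved it against the paper's own convention. In the final theorem the authors write $\Xi(\xi)=e^{-i\xi/2}\sinc(\xi/2)$, which forces $\sinc(u)=\sin(u)/u$, and substituting $\xi=2\pi\eta$ in the bound used there recovers \eqref{eq7.8} at $\eta$ with this unnormalized $\sinc$. That version is false as stated: for $a=2$ and $\xi=1.9$ one has $u=1.9\pi/\sqrt 2\approx 4.221$, $|\sin(u)/u|^2\approx 0.044$, whereas $e^{-3.61}\approx 0.027$ and the tail term vanishes because $|\xi|<2$. The likely culprit is the cutoff: with $\chi_{[-1,1]}(\xi)$ in place of $\chi_{[-1,1]}(\xi/2)$ (matching the $\mathrm{rect}$ convention of \cite{UAE}) the unnormalized statement is true, and your own two pointwise bounds prove it with no intermediate band at all, since the principal lobe $|\xi|<\sqrt a$ then contains $[-1,1]$ for every $a\ge 2$ and the tail term is available everywhere else. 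As written, your proof is internally correct but establishes a statement different from the one the paper actually invokes; you should either prove the unnormalized version with the corrected cutoff or flag the discrepancy explicitly.
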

\noindent
Note that the right-hand side of \eqref{eq7.8} is an element of $L^p(\R)$, $1\leq p\leq \infty$, and is independent of $a$.

\begin{theorem}
Suppose $q = a + v\in \bH_{\mathbb R}$ with $a \geq 2$. In the Fourier domain, the quaternionic B-splines $B_q$ converge in $L^p$-norm, $1\leq p\leq \infty$, to a modulated Gaussian:
\[
\left\|\wh{B_q} \left(\frac{\mydot}{\sqrt{a}}\right) - e^{-i\sqrt a(\mydot)/2}e^{-(\mydot)^2/24}e^{-i(\mydot) v/2\sqrt{a}}e^{-(\mydot)^2v/24a}\right\|_p \to 0,
\]
as $a\to\infty$.

In the time domain, the quaternionic B-splines converge in $L_{p^\prime}$-norm, $2\leq p^\prime \leq \infty$, to a modulated Gaussian:
\[
\left\|\sqrt{a}\,{B}_q \left(\sqrt{a} (\mydot)\right) - \frac{2\sqrt 3\sqrt{\pi a}}{|q|\sqrt{a+|q|}}(a-v+|q|)e^{-3(a+v)/2}e^{6\sqrt a (\mydot)}e^{-6a (\mydot)^2(a-v)/(a^2+|v|^2)}\right\|_{p^\prime} \to 0,
\]
as $a\to\infty$.
\end{theorem}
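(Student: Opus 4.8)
The plan is to prove the Fourier-domain assertion first for the full range $1\le p\le\infty$, and then to obtain the time-domain assertion from it by a single application of the Hausdorff--Young inequality; this is precisely what restricts the time-domain exponent to $2\le p'\le\infty$. Throughout write $G_a(\xi):=e^{-i\sqrt a\,\xi/2}\,e^{-\xi^2/24}\,e^{-i\xi v/2\sqrt a}\,e^{-\xi^2 v/24a}$ for the Fourier-domain Gaussian appearing in the statement, and note that by the computation preceding Lemma \ref{lem8} one has $G_a(\xi)=e^{-q'\xi^2}e^{-i\alpha q'\xi}$ with $q'=q/24a$ and $\alpha=12\sqrt a$, so that $G_a$ is consistent with Corollary \ref{cor2}.

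For the Fourier-domain statement I would upgrade the pointwise convergence of Theorem \ref{pointwise cvgence} to $L^p$ convergence by producing an $a$-independent majorant. Theorem \ref{pointwise cvgence} together with \eqref{gaussian cvgence} gives $\wh{B_q}(\xi/\sqrt a)/G_a(\xi)\to 1$ pointwise as $a\to\infty$; since $G_a(\xi)\ne 0$ and $|G_a(\xi)|\le e^{-\xi^2/24}e^{|v||\xi|}$ is bounded uniformly in $a\ge 1$ (using $|e^{-\xi^2 v/24a}|=1$ and $|e^{-i\xi v/2\sqrt a}|\le e^{|v||\xi|/2\sqrt a}$ recorded after Theorem \ref{pointwise cvgence}), the difference $D_a(\xi):=\wh{B_q}(\xi/\sqrt a)-G_a(\xi)=G_a(\xi)\,(\wh{B_q}(\xi/\sqrt a)/G_a(\xi)-1)$ tends to $0$ pointwise. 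For the majorant I would use the bound $|\wh{B_q}(\xi)|\le\sqrt{\cosh(\pi|v|)}\,|\Xi(\xi)|^{a}$ already extracted in the derivation of \eqref{eq2.3a} (valid since $|\arg\Xi|\le\pi$) together with $|\Xi(\xi)|=|\sinc(\xi/2)|$; after the rescaling $\xi\mapsto\xi/\sqrt a$ Lemma \ref{lem8} bounds the resulting power of $\sinc$ by a function lying in $L^p(\R)$ for every $1\le p\le\infty$ and independent of $a\ge 2$. Combined with the Gaussian bound this yields a single $h\in L^p(\R)$ dominating both $\wh{B_q}(\mydot/\sqrt a)$ and $G_a$. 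For $1\le p<\infty$ the dominated convergence theorem applied to $|D_a|^p\le(2h)^p$ then gives $\|D_a\|_p\to 0$.

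The endpoint $p=\infty$ needs a separate argument, since it is not covered by dominated convergence. On a fixed interval $[-M,M]$ the quantitative estimate \eqref{gaussian cvgence}, namely $\wh{B_q}(\xi/\sqrt a)=G_a(\xi)\,(1+\mathcal O(|\xi|^4|q|/a^2))$ with $|q|/a^2=\mathcal O(1/a)$, yields uniform convergence; on $|\xi|>M$ both $|\wh{B_q}(\mydot/\sqrt a)|$ and $|G_a|$ are dominated uniformly in $a$ by the tail of $h$, which can be made small by taking $M$ large. Letting $M\to\infty$ gives $\|D_a\|_\infty\to 0$. To transfer to the time domain I would use Fourier inversion: $\int_\R\wh{B_q}(\xi/\sqrt a)\,e^{i\xi t}\,d\xi=2\pi\sqrt a\,B_q(\sqrt a\,t)$, while the display preceding Lemma \ref{lem8} (an instance of Corollary \ref{cor2}) identifies $\int_\R G_a(\xi)\,e^{i\xi t}\,d\xi$ with the explicit time-domain Gaussian in the statement. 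Hence, up to the normalising constant, the time-domain difference equals $\sF^{-1}D_a$, and the Hausdorff--Young inequality $\|\sF^{-1}g\|_{p'}\le C\|g\|_p$, valid exactly for $1\le p\le 2$ with conjugate exponent $2\le p'\le\infty$, applied to $g=D_a$ gives the claimed $L^{p'}$ convergence from the Fourier-domain convergence already established.

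The main obstacle is the construction of the single $a$-independent majorant in $L^p$: this is where Lemma \ref{lem8} does the essential work, and the reduction of $|\wh{B_q}(\mydot/\sqrt a)|$ to a power of $\sinc$, absorbing the constant $\sqrt{\cosh(\pi|v|)}$ coming from the vector part, must be carried out carefully (including matching the normalisation of $\sinc$ used in Lemma \ref{lem8}). Secondary points are the bespoke endpoint argument for $p=\infty$ in the Fourier domain and the honest tracking of the $2\pi$ factor through the inversion and Hausdorff--Young steps, both of which are routine once the majorant is in place.
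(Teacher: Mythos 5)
Your proposal is correct and follows the same overall skeleton as the paper's proof: the $a$-independent $L^p$ majorant for $\wh{B_q}(\mydot/\sqrt a)$ obtained from $|\wh{B_q}(\xi)|\le\sqrt{\cosh(\pi|v|)}\,|\sinc(\xi/2)|^a$ together with Lemma \ref{lem8}, the companion Gaussian-type bound $e^{3|v|^2}e^{-(|\xi|/\sqrt{24}-\sqrt3|v|)^2}$ on the approximant, the identification of the time-domain Gaussian via Corollary \ref{cor2}, and the transfer to $L^{p'}$, $2\le p'\le\infty$, by Hausdorff--Young are all exactly the paper's ingredients. Where you diverge is in how the Fourier-domain convergence is concluded: the paper writes $\wh{B_q}(\xi/\sqrt a)=A_q(\xi/\sqrt a)e^{R(\xi/\sqrt a)}$ and uses the quantitative bound $|e^{R(\xi/\sqrt a)}-1|\le K\xi^4/a^2$ globally in $\xi$, integrating $|A_q|^p|\xi|^{4p}$ against the $a$-independent envelope to get the explicit rate $\|\cdot\|_p=\cO(a^{-2})$ for all $1\le p\le\infty$ in one computation; you instead invoke dominated convergence for $p<\infty$ and run a separate compact-set-plus-tails argument at $p=\infty$. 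Your route buys some robustness --- you only need the Taylor-remainder estimate where it is unambiguously valid, namely for $|\xi|/\sqrt a$ small, whereas the paper's global use of $e^{R}\le 1+K\xi^4/a^2$ for all $\xi\in\R$ deserves more justification than it receives --- but it forfeits the explicit decay rate $c'/a^2$ that the paper records in its final display (the theorem as stated only claims convergence, so nothing essential is lost). Both arguments are sound; yours is a legitimate, slightly more conservative variant of the same proof.
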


\begin{proof}
Note that $\Xi (\xi) = \frac{1-e^{-i \xi}}{i \xi} = e^{- i \xi/2}\,\sinc (\xi/2)$ and $\wh{B_q} (\xi/\sqrt{a}) = \Xi (\xi/\sqrt{a})^q = \Xi (\xi/\sqrt{a})^a\,e^{v \log \Xi(\xi/\sqrt{a})}$. Moreover, $|e^{v \log \Xi(\xi/\sqrt{a})}| \leq \sqrt{\cosh (\pi |v|)}$. Thus,
\begin{align*}
\left|\wh{B_q} (\xi/\sqrt{a})\right| &\leq  \sqrt{\cosh (\pi |v|)}\, \left|\sinc (\xi/2\sqrt{a})\right|^a\\
&\leq  \sqrt{\cosh (\pi |v|)}\,\left(e^{-(\xi/2\pi)^2}+(1 - \chi_{[-1,1]})(\xi/4\pi)\,\frac{2}{(\xi/2)^2}\right),
\end{align*}
where the bound is independnt of $a$.
Denote by $A_q (\xi/\sqrt{a})$ the approximant of $\wh{B_q}$ given by
\begin{align*}
A_q (\xi/\sqrt{a}) &:= \left(e^{-i\sqrt a\xi/2-\xi^2/24a}\right)^q = \left(e^{-i\sqrt a\xi/2-\xi^2/24}\right)^a\,e^{v (-i\sqrt a\xi/2-\xi^2/24)}\\
& = e^{-i\sqrt a \xi/2}e^{-\xi^2/24}e^{-i\xi v/2\sqrt{a}}e^{-\xi^2v/24a}.
\end{align*}
Then
\begin{align*}
\left|e^{-i\xi v/2\sqrt{a}}\right| &= \left|\cosh(|v|\xi/2\sqrt{a}) - \frac{i v}{|v|}\,\sinh(|v|\xi/2\sqrt{a})\right| = \sqrt{\cosh(|v|\xi/\sqrt{a})}\\
& \leq \sqrt{\cosh(|v|\xi/\sqrt{2})}
\end{align*}
and
\begin{align*}
\left|\left(e^{-i\sqrt a\xi/2-\xi^2/24a}\right)^q\right| &= \left|e^{-i\sqrt a \xi/2}e^{-\xi^2/24}e^{-i\xi v/2\sqrt{a}}e^{-\xi^2v/24a}\right|\\
&\leq \sqrt{\cosh(|v|\xi/\sqrt{2})}\,e^{-\xi^2/24} \leq e^{|v| |\xi|/\sqrt{2}}\,e^{-\xi^2/24}\\
& = e^{3 |v|^2}\,e^{-(|\xi|/\sqrt{24}-\sqrt{3} |v|)^2}.
\end{align*}
Both estimates above hold independent of $a\geq 2$.
We write, as in the proof of Theorem \ref{pointwise cvgence},
\[
\wh{B_q} (\xi/\sqrt{a}) = A_q (\xi/\sqrt{a})\, e^{R(\xi/\sqrt{a})},
\]
where $R(\xi/\sqrt{a}) := q \log \Xi (\xi/\sqrt{a}) - q \left(-i \xi/(2\sqrt{a}) - \xi^2/24a\right)$. The above estimates for $\wh{B_q}$ and $A_q$ imply that both functions are elements of $L^p(\R)$ for $1\leq p \leq \infty$. Therefore, using the fact that there exists a constant $K> 0$ such that
\[
e^{R(\xi/\sqrt{a})} \leq 1 + K\left(\frac{\xi^4}{a^2}\right),\quad\xi\in \R,\;a \geq 2,
\]
we obtain the following estimate: 
%\textcolor{red}{Note: I think we can show from earlier estimates that $|e^{R(\xi /\sqrt a)}-1|\leq C|q|\xi^4/a^2$. Is this what you had in mind?}
\begin{align*}
\left\|\wh{B_q} \left(\frac{\mydot}{\sqrt{a}}\right) - A_q \left(\frac{\mydot}{\sqrt{a}}\right)\right\|_p^p & = \int_\R \left|\wh{B_q} \left(\frac{\xi}{\sqrt{a}}\right) - A_q \left(\frac{\xi}{\sqrt{a}}\right)\right|^p d\xi\\
& = \int_\R \left|A_q \left(\frac{\xi}{\sqrt{a}}\right)\right|^p \left|e^{R(\xi/\sqrt{a})}-1\right|^p d\xi\\
& \leq K^p\, \int_\R \left|A_q \left(\frac{\xi}{\sqrt{a}}\right)\right|^p \left|\frac{\xi^4}{a^2}\right|^p d\xi\\
& \leq \frac{K^p}{a^{2p}} \int_\R \left|A_q \left(\frac{\xi}{\sqrt{a}}\right)\right|^p \left|\xi\right|^{4p} d\xi \to 0\text{ as }a\to\infty .
\end{align*}
%The expression on the right goes to zero as $a\to\infty$.

To prove the convergence in the time domain, we remark that for $a\geq 1$, $\wh{B_q}\in L^1(\R)\cap L^2(\R)$. The Hausdorff-Young inequality applied to $1\leq p\leq 2$ and $\frac{1}{p^\prime} = 1 - \frac{1}{p}$ and Fourier inversion (see Corollary \ref{cor2} and, in particular, \eqref{eq7.8}) yield
\begin{align*}
&\left\|\sqrt{a}\,{B}_q \left(\sqrt{a} (\mydot)\right) - \frac{2\sqrt 3\sqrt{\pi a}}{|q|\sqrt{a+|q|}}(a-v+|q|)e^{-3(a+v)/2}e^{6\sqrt a (\mydot)}e^{-6a (\mydot)^2(a-v)/(a^2+|v|^2)}\right\|_{p^\prime}
\\
& \hspace*{48pt}\leq c\,\left\|\wh{B_q} \left(\frac{\mydot}{\sqrt{a}}\right) - e^{-i\sqrt a(\mydot)/2}e^{-(\mydot)^2/24}e^{-i(\mydot) v/2\sqrt{a}}e^{(\mydot)^2v/24a}\right\|_p \leq \frac{c'}{a^2},
\end{align*}
for some positive constants $c$ and $c'$.
\end{proof}

\end{document}